  \definecolor{myblue}{RGB}{17,83,166}
  \colorlet{myred}{red!50!black}
  \colorlet{mygreen}{green!50!black}
  \setlist[enumerate]{label=(\roman*),
  align=left,leftmargin=1.75\parindent,labelwidth=1.75\parindent,labelsep=0pt}
  \setlist[itemize]{label={\raisebox{0.15ex}{\tiny \textbullet}},
  align=left,leftmargin=0.67\parindent,labelwidth=0.67\parindent,labelsep=0pt}
\theoremstyle{plain}
\newtheorem{theorem}{Theorem}[section]
\newtheorem{corollary}[theorem]{Corollary}
\newtheorem{lemma}[theorem]{Lemma}
\newtheorem{proposition}[theorem]{Proposition}
\newtheorem{definition}[theorem]{Definition}
\newtheorem{assumption}{Assumption}
\theoremstyle{definition}
\newtheorem{remark}[theorem]{Remark}
\newtheorem{example}[theorem]{Example}
\renewcommand{\geq}{\geqslant}
\renewcommand{\leq}{\leqslant}
\newcommand{\R}{\mathbb{R}}
\newcommand{\N}{\mathbb{N}}
\newcommand{\proba}{\mathbb{P}}
\newcommand{\TP}{\mathbb{TP}}
\newcommand{\Dcal}{\mathcal{D}}
\newcommand{\Scal}{\mathcal{S}}
\newcommand{\Tcal}{\mathcal{T}}
\newcommand{\Ucal}{\mathcal{U}}
\newcommand{\Vcal}{\mathcal{V}}
\newcommand{\Wcal}{\mathcal{W}}
\newcommand{\Xcal}{\mathcal{X}}
\newcommand{\Ycal}{\mathcal{Y}}
\newcommand{\Min}{{\scshape Min}\xspace}
\newcommand{\Max}{{\scshape Max}\xspace}
\newcommand{\state}{[n]}
\newcommand{\unit}{e}
\newcommand{\norm}[1]{\| \ifx\\#1\\ \cdot \else #1 \fi \|}
\newcommand{\supnorm}[1]{\| \ifx\\#1\\ \cdot \else #1 \fi \|_\infty}
\newcommand{\Hilbert}[1]{\| \ifx\\#1\\ \cdot \else #1 \fi \|_\textup{H}}
\newcommand{\Hnorm}{q_\textup{H}}
\newcommand{\toto}{\rightrightarrows}
\newcommand{\FP}{Fix}
\newcommand{\id}{Id}
\DeclareMathOperator{\dom}{dom}
\DeclareMathOperator{\range}{rge}
\DeclareMathOperator{\co}{co}
\DeclareMathOperator{\dist}{dist}
\DeclareMathOperator{\intr}{int}
\DeclareMathOperator{\clo}{cl}
\DeclareMathOperator{\supp}{supp}
\def\<#1,#2>{\langle #1, #2\rangle}
\title[Ergodic stochastic games]{An accretive operator approach to ergodic zero-sum stochastic games}
\author[A.\ Hochart]{Antoine Hochart}
\address{Universidad Adolfo Ib{\'a}{\~n}ez, Santiago, Chile}
\email{antoine.hochart@gmail.com}
\thanks{The author is funded by FONDECYT grant 3180662.
This work was initiated when the author was with Inria and CMAP, Ecole polytechnique,
supported by a PhD fellowship of Fondation Math\'ematique Jacques Hadamard.
The author was also partially supported by the Air Force Office of Scientific Research,
Air Force Material Command, USAF, under grant number FA9550-15-1-0500,
when he was with Toulouse School of Economics, Universit\'e Toulouse 1 Capitole.}
\date{\today}
\subjclass[2010]{Primary: 91A15, 47H25; Secondary: 47H09, 47H06, 47H04.}
\keywords{Zero-sum stochastic game, ergodic stochastic game, Shapley operator,
  nonexpansive map, accretive mapping, set-valued mapping}
\begin{document}

\maketitle

\begin{abstract}
  We study some ergodicity property of zero-sum stochastic games with a finite state space
  and possibly unbounded payoffs.
  We formulate this property in operator-theoretical terms, involving the solvability
  of an optimality equation for the Shapley operators (i.e., the dynamic programming operators)
  of a family of perturbed games.
  The solvability of this equation entails the existence of the uniform value, and its solutions
  yield uniform optimal stationary strategies.
  We first provide an analytical characterization of this ergodicity property, and address
  the generic uniqueness, up to an additive constant, of the solutions of the optimality equation.
  Our analysis relies on the theory of accretive mappings, which we apply to maps of the form
  $\id - T$ where $T$ is nonexpansive.
  Then, we use the results of a companion work to characterize the ergodicity of stochastic
  games by a geometrical condition imposed on the transition probabilities.
  This condition generalizes classical notion of ergodicity for finite Markov chains and
  Markov decision processes.
\end{abstract}

\section{Introduction}

\subsection{Motivations}

In zero-sum stochastic games, introduced by Shapley \cite{Sha53}, two agents are facing
repeatedly a zero-sum game that depends on a state variable, the evolution of which is
governed by a stochastic process jointly controlled by the players.
The finite-horizon value $v^k$ is an equilibrium payoff achieved when
the players are optimizing their average payoff over the $k$ first stages.
A significant part of the literature focuses on the study of asymptotic properties of $v^k$
as the horizon $k$ goes to infinity.
Two main approaches are particularly followed.
In the asymptotic approach, one studies the convergence of $v^k$ toward some limit
called the {\em asymptotic value}.
In the uniform approach, the problem is the existence of uniform optimal strategies, i.e.,
strategies that are near optimal in any $k$-stage game with $k$ large enough,
giving rise to the {\em uniform value}.
We refer the reader to \cite{NS03,MSZ15} for background on stochastic games.

When the state space is finite (which we assume throughout the paper), the existence of
the asymptotic value was proved for stochastic games with finite action spaces:
first for particular classes of games (recursive games by Everett \cite{Eve57},
absorbing games by Kohlberg \cite{Koh74}) and then in general by Bewley and Kohlberg \cite{BK76}.
The result still holds for recursive and absorbing games with compact action spaces and 
continuous payoff and transition functions (see \cite{Sor03} and \cite{RS01}, respectively).
On the other hand, Mertens and Neymann \cite{MN81} showed the existence of the uniform value
for stochastic games with finite action spaces.
Bolte, Gaubert and Vigeral \cite{BGV15} extended these results to ``definable'' stochastic
games, i.e., games whose data are definable in some o-minimal structure (e.g., when
the action spaces and the payoff and transition functions are semialgebraic).
The uniform value also exists for absorbing games with compact action spaces (see \cite{MNR09}).
We further mention that the existence of the uniform value is also guaranteed for Markov
decision processes (equivalent to one-player stochastic games) with bounded payoffs
(see Renault \cite{Ren11}).
However, the asymptotic value (hence the uniform value) does not exist for general
zero-sum stochastic games, even with standard assumptions on the data, i.e., compact action spaces,
continuous payoff and transition functions (see Vigeral \cite{Vig13}, see also
Ziliotto \cite{Zil16-AOP} for a counterexample in the framework of
zero-sum repeated games with signals).

Zero-sum stochastic games have a recursive structure which expresses itself in
their dynamic programming operators, a.k.a.\ Shapley operators.
From their analysis, it is possible to infer asymptotic properties of the games
(see e.g., Rosenberg and Sorin \cite{RS01}, Neymann \cite{Ney03}, Sorin \cite{Sor04},
Ziliotto \cite{Zil16-MOR}).
In this paper, following this so-called ``operator approach'', we focus on
the optimality equation (known as {\em average case optimality equation},
{\em Shapley equation} or {\em ergodicity equation}) $T(u) = \lambda \unit + u$,
where $T: \R^n \to \R^n$ is the Shapley operator of a game with $n$ states,
and $\unit$ denotes the unit vector of $\R^n$, i.e., the vector
whose coordinates are all equal to $1$.
Indeed, if the latter equation has a solution $(\lambda,u) \in \R \times \R^n$,
then the game has a uniform value, which is equal to $\lambda$ for every initial state.
Furthermore, the vector $u$ yields uniform optimal strategies.

Conditions which guarantee the solvability of the ergodic equation are usually
referred to as ergodicity conditions:
they are recurrence conditions which ensure some stability property of the state process.
For finite Markov chains (equivalent to zero-player games), the optimality equation
can be seen as an instance of the Poisson equation.
It is well known that it has a solution if the transition matrix has a unique invariant
probability measure, or equivalently if the Markov chain has a unique ergodic class.
For Markov decision processes (equivalent to one-player games), Bather \cite{Bat73}
showed that the ergodic equation has a solution when the system is communicating,
meaning that for each pair of states $(i,j)$, there is a stationary strategy such that
the probability to reach $j$ from $i$ in finite time is positive.
However, these accessibility relations do not easily carry over to the case of two players.
As a consequence, one usually imposes on the games a strong communication structure,
such as being irreducible (resp., unichain), that is, one requires the Markov chains
induced by {\em all pairs} of stationary strategies to be irreducible
(resp., unichain, i.e., to have a unique ergodic class) (see e.g., \cite{HK66,Vri03}).

In \cite{AGH15-DCDS}, Akian, Gaubert and Hochart obtained milder recurrence conditions
as a by-product of their analysis of the solvability of the ergodic equation.
However, the results only apply to stochastic games with {\em bounded} payoffs.
In applications, this boundedness property is restrictive and it is thus desirable to
extend the results to a broader framework.
This constitutes the purpose of the present work.

We further mention that the ergodic equation is a classical tool in the study of
stochastic games (as well as Markov decision processes) with a more general state space.
See e.g., \cite{BG93,Sen94,AHS97} for countable state spaces or
\cite{GB98,HLL00,JN01,Kue01} for Borel spaces.
Due to the technical difficulties inherent of these settings, the different kinds of
ergodicity assumptions made on these games are much more involved than those
previously discussed in the finite state space setting.
However, in all the papers mentioned above, these conditions all boil down to the irreducible
or the unichain case when the state space is finite.

\subsection{Description of the main results}

In this paper, we consider zero-sum stochastic games with a finite state space and
possibly {\em unbounded} payoffs that satisfy some ergodicity property.
Following Akian, Gaubert and Hochart \cite{AGH15-DCDS}, we formulate the latter condition
in operator-theoretical terms, namely, we say that a stochastic game is ergodic if
the optimality equation (a.k.a.\ ergodic equation) has a solution for all perturbations of
the payoff function that only depend on the state.
Let us note that, for finite Markov chains, this definition is one of several equivalent
manifestations of ergodicity, as discussed in \cite{AGH15-DCDS}.
One of our main result in this framework of stochastic games (\Cref{thm:ergodicity-slice-spaces})
is an analytical characterization of the latter stability property in terms of boundedness
in {\em Hilbert's seminorm} of some sets, so called {\em slice spaces}, which are invariant by
the Shapley operator $T$ of the game.
This stability result follows from a more general one (\Cref{thm:stability-acoe}) which is stated
in the framework of nonlinear Perron-Frobenius theory
(see \cite{Nus88,GG04}, or \cite{LN12} for background).

In a companion work (some of whose results were announced in \cite{AGH15-CDC}),
combinatorial criteria (of a graph-theoretical nature)
for the boundedness of all slice spaces of $T$ are given. 
Using this result, we then provide a geometrical ergodicity condition which is imposed
only to the transition probabilities of the game.
This condition involves two disjoint subsets of states, called {\em dominions}, each controlled
by a different player, in the sense that each player can make one of these subsets invariant
for the state process.
This communication structure readily generalizes classical ergodicity characterization of
finite Markov chains (see e.g., \cite{KS76}) or Markov decision processes
(see e.g., \cite{Bat73}).

Any Shapley operator $T$ is monotone (that is, order-preserving) and additively homogeneous
(that is, commutes with the addition by a vector proportional to the unit vector).
These properties imply in particular that $T$ is nonexpansive with respect to
the supremum norm, which makes $\id - T$ an $m$-accretive mapping.
Our results are derived from the study of the latter map.
Other results have been obtained using such an ``acccretive operator approach''
(see Vigeral \cite{Vig10} or Vigeral and Sorin \cite{SV16}), but contrary to them,
our analysis exploits the nonexpansiveness of $T$ with respect to Hilbert's seminorm.
Furthermore, in addition to the solvability of the ergodic equation, this approach
allows us to extend our analysis to the problem of uniqueness (up to an additive constant)
of the solutions.

The paper is organized as follows.
In \Cref{sec:preliminaries}, we recall the definition of zero-sum stochastic games,
we present the operator approach, and state our main results.
In \Cref{sec:accretive-mappings}, after some preliminaries on accretive mappings, we prove
a surjectivity condition for these mappings, which generalizes our ergodicity condition.
In \Cref{sec:FP-problems}, we infer from the previous section a result on the stability (under
additive perturbations) of existence of fixed points for nonexpansive maps.
We also address the ``generic'' uniqueness of the fixed point with respect to the space of
additive perturbations.
Finally, in \Cref{sec:applications}, we apply these results to stochastic games and
prove the two main results stated in \Cref{sec:preliminaries}.

We mention that part of these results were announced in the conference proceedings
\cite{Hoc16-MTNS}.

\section{Ergodicity of stochastic games: preliminaries and main results}
\label{sec:preliminaries}

\subsection{Zero-sum stochastic games}
\label{sec:stochastic-games}

In this paper, we consider zero-sum stochastic games with a {\em finite} state space and
possibly {\em unbounded} rewards.
They involve two players which we call \Max and \Min.
For the games to be well defined, we shall assume that their sets of actions are Borel spaces,
i.e., Borel subsets of a Polish space, and given any such Borel space $X$, we will denote by
$\Delta(X)$ the set of Borel probability measures on $X$, equipped with the weak* topology.

\medskip
A (zero-sum) stochastic game is a 7-tuple $\Gamma = (\state,A,B,K_A,K_B,r,p)$ defined by:
\begin{itemize}
  \item a finite state space $\state := \{1,\dots,n\}$;
  \item Borel action spaces $A$ and $B$ for
    players \Max and \Min, respectively;
  \item Borel admissible action sets $K_A \subset \state \times A$ and
    $K_B \subset \state \times B$:
    for each state $i \in \state$, $A_i = \{ a \in A \mid (i,a) \in K_A \}$ (resp.,
    $B_i = \{ b \in B \mid (i,b) \in K_B \}$) denotes the set of actions that are available
    to player \Max (resp., \Min) in state $i$.
    We let $K = \{ (i,a,b) \mid i \in \state , \; a \in A_i , \; b \in B_i \}$;
  \item a Borel measurable payoff function $r: K \to \R$;
  \item a Borel measurable transition function $p: K \to \Delta(\state)$.
\end{itemize}

A stochastic game $\Gamma$ is played in stages, starting from a given initial state
$i_0 \in \state$ known by the players.
It proceeds as follows: at each stage $\ell \geq 0$, if the current state is $i_\ell$,
the players choose simultaneously and independently some actions $a_\ell \in A_{i_\ell}$
and $b_\ell \in B_{i_\ell}$, respectively.
This incurs a payoff $r(i_\ell,a_\ell,b_\ell)$ given by player \Min to player \Max,
and the state $i_{\ell+1}$ at the next stage is drawn according to the probability
distribution $p(\cdot \mid i_\ell,a_\ell,b_\ell)$.
Then $(i_{\ell+1},a_\ell,b_\ell)$ is announced to both players.

\medskip
Denote by $H_k = K^{k-1} \times \state$ the set of histories of length $k \geq 1$.
A (behavioral) strategy of player \Max is a Borel measurable map
$\sigma: \cup_{k \geq 1} H_k \to \cup_{i \in \state} \Delta(A_i)$ such that, for every
$k \geq 1$ and every history $h_k = (i_1,a_1,b_1,\dots,i_k)$ of length $k$, we have
$\sigma(h_k) \in \Delta(A_{i_k})$.
In particular, the strategy $\sigma$ is stationary if $\sigma(h_k)$ only depends on
the final state $i_k$, let alone the history length $k$.
Likewise, a (behavioral) strategy of player \Min is defined by a Borel measurable map 
$\tau: \cup_{k \geq 1} H_k \to \cup_{i \in \state} \Delta(B_i)$.
We denote by $\Scal$ (resp., $\Tcal$) the set of strategies of player \Max (resp., \Min).
Also, for any state $i$, an element of $\Delta(A_i)$ (resp., $\Delta(B_i)$) is called
a mixed action of player \Max (resp., \Min), and when this action is a Dirac measure, 
we call it pure and identify it with its supporting point in $A_i$ (resp., $B_i$).

An initial state $i \in \state$ and a pair of strategies $(\sigma,\tau)$ of the players
induce a probability measure on the set of infinite histories, $H_\infty = K^\N$,
the expectation of which is denoted by $\mathbb{E}_{i,\sigma,\tau}$.
Then, for every length $k \geq 1$, the $k$-stage payoff is
\[
  \gamma_i^k(\sigma,\tau) = \mathbb{E}_{i,\sigma,\tau}
  \Bigg[ \frac{1}{k} \sum_{\ell=0}^{k-1} r(i_\ell,a_\ell,b_\ell) \Bigg]  ,
\]
and the $k$-stage game starting in $i$ has a value $v_i^k$ if
\begin{equation}
  \label{eq:finite-horizon-value}
  v_i^k = \sup_{\sigma \in \Scal} \, \inf_{\tau \in \Tcal} \; \gamma_i^k(\sigma,\tau)
  = \inf_{\tau \in \Tcal} \, \sup_{\sigma \in \Scal} \; \gamma_i^k(\sigma,\tau) .
\end{equation}

\medskip
Throughout the paper, we implicitly assume that all the finite-stage values of the games
that we consider exist.
We shall sometimes (but not always) make the following standing assumption, which ensures,
besides the existence of the finite-stage values, that the players have optimal strategies,
that is, the infima and suprema in \Cref{eq:finite-horizon-value} can be replaced by minima
and maxima, respectively (see \cite[Thm.~I.2.4]{MSZ15}).

\begin{assumption}
  \label{asm:compact-continuous}
  For every state $i \in \state$,
  \begin{enumerate}
    \item the action sets $A_i$ and $B_i$ are nonempty and compact;
    \item the payoff function $r(i,\cdot,\cdot)$ is bounded from below or from above;
      \label{asm:compact-continuous-ii}
    \item for every pair of actions $(a,b) \in A_i \times B_i$, the functions $r(i,\cdot,b)$
      and $p(j \mid i,\cdot,b)$, with $j \in [n]$, are upper semicontinuous (u.s.c.\ for short),
      and the functions $r(i,a,\cdot)$ and $p(j \mid i,a,\cdot)$, $j \in [n]$, are
      lower semicontinuous (l.s.c.\ for short).
  \end{enumerate}
\end{assumption}
%
%

\begin{remark}
  \label{rem:continuity}
  For all states $i \in [n]$ and all pairs of actions $(a,b) \in A_i \times B_i$, we have
  $\sum_{j \in [n]} p(j \mid i,a,b) = 1$.
  This implies that the upper (resp., lower) semicontinuity assumption of the functions
  $p(j \mid i,\cdot,b)$ (resp., $p(j \mid i,a,\cdot)$), with $j \in [n]$, is equivalent
  to their continuity.
\end{remark}

We emphasize that some of our results (in particular one of the main results,
\Cref{thm:ergodicity-slice-spaces}) do apply to more general settings, for which
the compactness of the action sets and/or the continuity of the payoff and transition
functions may not hold.
As an example, let us mention the following subclasses of stochastic games:
\begin{itemize}
  \item Markov decision processes (equivalent to one-player games);
  \item games with perfect information, for which each state $i$ is controlled by only one player,
    i.e., the functions $r(i,\cdot,\cdot)$ and $p(i,\cdot,\cdot)$ only depend on the actions of
    one player.
\end{itemize}
In any case, it worth noting that the payoff function {\em may not be bounded},
even if \Cref{asm:compact-continuous} holds
(see \Cref{ex:ergodicity-perfect-info,ex:ergodicity-dominions}).

\medskip
A standard problem is to determine if the value vector $v^k$ has a limit in $\R^n$ when
the horizon $k$ goes to infinity.
When this limit exists, it is called the {\em asymptotic value} of the game.
Bewley and Kohlberg \cite{BK76} proved that when the action spaces are finite
(along with the state space), the asymptotic value exists.
But this result cannot be extended to stochastic games with compact action spaces and
continuous payoff and transition functions (see \cite{Vig13}).

Loosely speaking, the asymptotic value $v = \lim_{k \to \infty} v^k$ exists when the players
are able to guarantee the same payoff, $v_i$, up to an arbitrarily small quantity,
in any $k$-stage game starting in $i$ with $k$ large enough and known in advance.
A stronger notion of value, the {\em uniform value}, asks for both players to guarantee, up to
any small perturbation, the same payoff in any finite-stage game with a sufficiently large
but unknown horizon.
Precisely, the stochastic game $\Gamma$ has a uniform value $v^\infty \in \R^n$ if,
for all $\varepsilon > 0$, there exist strategies $\sigma^* \in \Scal$,
$\tau^* \in \Tcal$, and a horizon $k_0 \in \N$ such that, for all initial states $i \in \state$,
for all strategies $\sigma \in \Scal$ and $\tau \in \Tcal$, and for all $k \geq k_0$,
\[
  \gamma_i^k(\sigma^*, \tau) \geq v^\infty_i - \varepsilon
  \quad \text{and} \quad
  \gamma_i^k(\sigma, \tau^*) \leq v^\infty_i + \varepsilon .
\]
The strategies $\sigma^*$ and $\tau^*$ are called uniform $\varepsilon$-optimal strategies,
or simply uniform optimal if they are uniform $\varepsilon$-optimal for every $\varepsilon > 0$.

Mertens and Neyman \cite{MN81} proved that the uniform value exists for stochastic games
with finite state and action spaces.
This result was extended in \cite{BGV15} to games with a finite state space for which
the data are definable in some o-minimal structure.
Note that when the uniform value exists, the asymptotic value also exists and they are equal.

\subsection{Operator approach}

Using a dynamic programming principle, Shapley proved in \cite{Sha53} that the value of
a finite-stage stochastic game satisfies a recursive formula.
This formula is written by means of the so-called {\em Shapley operator}.
Given a stochastic game $\Gamma$, the latter is a map $T: \R^n \to \R^n$ whose
$i$th coordinate is defined by
\begin{equation}
  \label{eq:Shapley-operator}
  \begin{split}
    T_i(x) & = \sup_{\mu \in \Delta(A_i)} \inf_{\nu \in \Delta(B_i)} \bigg\{
      r(i,\mu,\nu) + \sum_{\ell \in \state} x_\ell \, p(\ell \mid i,\mu,\nu) \bigg\} \\
    & = \inf_{\nu \in \Delta(B_i)} \sup_{\mu \in \Delta(A_i)} \bigg\{
      r(i,\mu,\nu) + \sum_{\ell \in \state} x_\ell \, p(\ell \mid i,\mu,\nu) \bigg\}, \quad
    x \in \R^n,
  \end{split}
\end{equation}
where, for any Borel measurable map $f: K \to \R$, we introduce its bilinear extension
\[
  f(i,\mu,\nu) := \int_{A_i} \int_{B_i} f(i,a,b) \, d\mu(a) \, d\nu(b), \quad
  \forall i \in \state, \quad
  \forall (\mu,\nu) \in \Delta(A_i) \times \Delta(B_i).
\]
The quantity $T_i(x)$ represents the value of a one-stage game with initial state $i \in \state$
and an additional payoff $x_j$ if the final state is $j \in \state$.
As mentioned in the latter subsection, we shall always assume -- implicitly -- that
the Shapley operator is well defined.
This is particularly the case when \Cref{asm:compact-continuous} holds and, if so,
the $\sup$ and $\inf$ operators in~\labelcref{eq:Shapley-operator} can be replaced by
$\max$ and $\min$, respectively. 

Then, the dynamic programming principle introduced by Shapley writes:
\begin{equation}
  \label{eq:Shapley-equation}
  v^0 = 0 \qquad \text{and} \qquad (k+1) v^{k+1} = T(k v^{k}), \quad \forall k \geq 0.
\end{equation}
As an immediate consequence, if the asymptotic value exists, then it is given by
\begin{equation}
  \label{eq:asymptotic-value}
  \lim_{k \to \infty} v^k = \lim_{k \to \infty} \frac{T^k(0)}{k} ,
\end{equation}
where $T^k := T \circ \dots \circ T$ denotes the $k$th iterate of $T$.

\medskip
The ``operator approach'' exploits the recursive structure~\labelcref{eq:Shapley-equation}
in order to infer convergence properties of $v^k$ from the analysis of the Shapley operator $T$
(see e.g., \cite{RS01,Ney03,Sor04,BGV15,Zil16-MOR}).
The latter analysis makes extensive use of the following properties, satisfied by
any Shapley operator:
\begin{flalign*}
  & \text{\em monotonicity:}
  && x \leq y \implies T(x) \leq T(y) , \quad x,y \in \R^n , && \\
  & \text{\em additive homogeneity:}
  && T(x + \alpha \unit) = T(x) + \alpha \unit , \quad x \in \R^n , \quad \alpha \in \R , && \\
  & \text{{\em nonexpansiveness}:}
  && \supnorm{T(x)-T(y)} \leq \supnorm{x-y} , \quad x, y \in \R^n , &&
\end{flalign*}
where $\R^n$ is endowed with its usual partial order, $\unit$ denotes the unit
vector of $\R^n$ and $\supnorm{}$ is the sup-norm of $\R^n$.
Note that the last property can be deduced from the first two (see \cite{CT80}).

A basic tool to study the asymptotic properties of the $k$-stage value is the {\em ergodic
equation}, also known as the {\em average cost optimality equation}
or sometimes {\em Shapley equation}:
\begin{equation}
  T(u) = \lambda \unit + u , \quad u \in \R^n, \quad \lambda \in \R ,
  \label{eq:acoe}
\end{equation}
where $\unit := (1,\dots,1)^\top$ is the unit vector of $\R^n$.
Intuitively, this equation means that if the players choose optimal strategies in
the game with an additional terminal payoff of $u_i$ when the last visited state is $i$,
then the payoff at each stage is equal to $\lambda$, whatever the horizon and
the initial state.
From this game-theoretical interpretation or, in the spirit of the operator approach,
using \labelcref{eq:asymptotic-value}, the additive homogeneity and the nonexpansiveness of $T$,
one easily deduce that if the ergodic equation has a solution, meaning that there exists a pair
$(\lambda,u) \in \R \times \R^n$ satisfying \Cref{eq:acoe}, then the asymptotic value of
the game exists and is a constant vector, whose entries are all equal to $\lambda$ -- we say
that a vector is ``constant'' if it is proportional to the unit vector.
Note that there is a unique scalar $\lambda$ for which \labelcref{eq:acoe} holds,
and for every $\alpha \in \R$, the pair $(\lambda, u + \alpha \unit)$ is also a solution
to~\labelcref{eq:acoe}: we say that $(\lambda,u)$ is defined up to an additive constant.

When the ergodic equation is solvable the existence of the asymptotic value may in fact
be refined by the following known result.
We state the proof for the reader's convenience.

\begin{proposition}[Existence of the uniform value]
  Let $\Gamma$ be a stochastic game such that the ergodic equation \labelcref{eq:acoe} is
  solvable for its Shapley operator $T$.
  Let $(\lambda,u)$ be any solution.
  Then, the uniform value exists and is equal to the constant vector $\lambda \unit$.
  Moreover, both players have stationary uniform $\varepsilon$-optimal strategies,
  which can be obtained from the vector $u$.
\end{proposition}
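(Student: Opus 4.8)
The plan is to read off stationary near-optimal strategies directly from the solution $u$ of the ergodic equation, and then to control the $k$-stage payoff these strategies induce by a telescoping (discrete supermartingale) argument, whose error term is governed by $\supnorm{u}$; the latter is finite precisely because $\state$ is finite.

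First I would exploit the two expressions for the Shapley operator in \labelcref{eq:Shapley-operator}, evaluated at the fixed point $u$. Since $T_i(u) = \lambda + u_i$, for each state $i$ there is a mixed action $\mu_i^* \in \Delta(A_i)$ of \Max, optimal in the one-shot game defining $T_i(u)$ (or $\eta$-optimal, should the supremum not be attained), such that
\[
  r(i,\mu_i^*,\nu) + \sum_{\ell \in \state} u_\ell\, p(\ell \mid i,\mu_i^*,\nu) \geq \lambda + u_i , \quad \forall \nu \in \Delta(B_i),
\]
and symmetrically a mixed action $\nu_i^* \in \Delta(B_i)$ of \Min with
\[
  r(i,\mu,\nu_i^*) + \sum_{\ell \in \state} u_\ell\, p(\ell \mid i,\mu,\nu_i^*) \leq \lambda + u_i , \quad \forall \mu \in \Delta(A_i).
\]
I would then let $\sigma^* \in \Scal$ (resp.\ $\tau^* \in \Tcal$) be the stationary strategy playing $\mu_i^*$ (resp.\ $\nu_i^*$) whenever the current state is $i$; since $\state$ is finite these are automatically Borel measurable, and they constitute the candidate uniform $\varepsilon$-optimal strategies obtained from $u$.

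Next I would carry out the telescoping estimate for \Max. Fix $\tau \in \Tcal$ and an initial state $i$, and write $\mathcal{F}_\ell$ for the $\sigma$-algebra generated by the history up to the state $i_\ell$. Under $\sigma^*$, conditioning on $\mathcal{F}_\ell$ and using the independence of the players' mixed actions to recover the bilinear forms, the defining inequality for $\mu_{i_\ell}^*$ gives
\[
  \mathbb{E}_{i,\sigma^*,\tau}\!\left[ r(i_\ell,a_\ell,b_\ell) + u_{i_{\ell+1}} \mid \mathcal{F}_\ell \right] \geq \lambda + u_{i_\ell} .
\]
Summing over $\ell = 0,\dots,k-1$, taking total expectation and telescoping the $u$-terms (using $i_0 = i$) yields
\[
  \mathbb{E}_{i,\sigma^*,\tau}\!\left[ \sum_{\ell=0}^{k-1} r(i_\ell,a_\ell,b_\ell) \right] \geq k\lambda + u_i - \mathbb{E}_{i,\sigma^*,\tau}[u_{i_k}] ,
\]
whence, dividing by $k$ and bounding $|u_i - \mathbb{E}[u_{i_k}]| \leq 2\supnorm{u}$, one obtains $\gamma_i^k(\sigma^*,\tau) \geq \lambda - 2\supnorm{u}/k$. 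The mirror computation with $\tau^*$ produces $\gamma_i^k(\sigma,\tau^*) \leq \lambda + 2\supnorm{u}/k$ for every $\sigma \in \Scal$.

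Finally I would conclude: as $\supnorm{u} < \infty$, given $\varepsilon > 0$ it suffices to take $k_0 > 2\supnorm{u}/\varepsilon$ (and, in the non-attained case, $\eta < \varepsilon/2$) to get, for all $i \in \state$ and all $k \geq k_0$, both $\gamma_i^k(\sigma^*,\tau) \geq \lambda - \varepsilon$ and $\gamma_i^k(\sigma,\tau^*) \leq \lambda + \varepsilon$. By definition this shows the uniform value exists and equals $\lambda\unit$, realized by the stationary strategies $\sigma^*,\tau^*$. The delicate point is not the algebra but the analytic hygiene forced by the possibly unbounded payoff: I must ensure the conditional expectations and the partial sums $\mathbb{E}[\sum_\ell r]$ are well defined and that the one-step inequalities survive passage to expectation. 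This is exactly where the one-sided boundedness of $r$ in \Cref{asm:compact-continuous} plays its role, by keeping the relevant expectations consistently valued in $(-\infty,+\infty]$ or $[-\infty,+\infty)$; the attainment (or $\eta$-approximate attainment) of the one-shot optima is the only other ingredient needed, and it is precisely what makes $\sigma^*$ and $\tau^*$ bona fide strategies.
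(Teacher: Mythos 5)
Your proposal is correct and follows essentially the same route as the paper: extract one-shot $\varepsilon$-optimal mixed actions at $u$, form the induced stationary strategies, and show the $k$-stage payoff deviates from $\lambda$ by at most $\varepsilon + 2\supnorm{u}/k$. The only difference is presentational: the paper packages your telescoping/conditioning step as the iteration of the auxiliary one-player Shapley operator $T^{\sigma^*}$ (using its monotonicity and additive homogeneity together with the recursion \labelcref{eq:Shapley-equation}), which sidesteps the measure-theoretic care you rightly flag, since the relevant integrability is already subsumed in the paper's standing assumption that the finite-stage values and the Shapley operator are well defined.
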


\begin{proof}
  Let $\varepsilon$ be a fixed positive real.
  For every state $i \in \state$, let $\mu^*_i \in \Delta(A_i)$ be an action of player \Max
  such that
  \[
    \inf_{\nu \in \Delta(B_i)} \bigg\{ r(i,\mu^*_i,\nu) +
      \sum_{\ell \in \state} u_\ell \, p(\ell \mid i,\mu^*_i,\nu) \bigg\} \geq
    T_i(u) - \varepsilon .
  \]
  Then, define the stationary strategy $\sigma^*$ of player \Max such that the mixed action
  $\mu^*_i$ is selected whenever the current state is $i$.

  We now introduce the Shapley operator $T^{\sigma^*}$ of a one-player game based on
  $\Gamma$ where the strategy of player \Max is fixed to $\sigma^*$.
  Precisely, the $i$th coordinate of $T^{\sigma^*}$ is given by
  \begin{equation*}
    T^{\sigma^*}_i(x) = \inf_{\nu \in \Delta(B_i)} \bigg\{ r(i,\mu^*_i,\nu) +
      \sum_{\ell \in \state} x_\ell \, p(\ell \mid i,\mu^*_i,\nu) \bigg\}, \quad x \in \R^n,
  \end{equation*}
  so that $T^{\sigma^*}(u) \geq T(u) - \varepsilon \unit = (\lambda - \varepsilon) \unit + u$.

  Denoting by $\supnorm{u} = \max_{i \in \state} |u_i|$ the sup-norm of $u$, we have, for all
  positive integers $k$ and all strategies $\tau$ of player \Min,
  \[
    \gamma^k (\sigma^*,\tau) \geq \frac{1}{k} \big( T^{\sigma^*} \big)^k (0) \geq
    \frac{1}{k}  \left( \big( T^{\sigma^*} \big)^k (u) - \supnorm{u} \, \unit \right),
  \]
  where the first inequality comes from the recursive property \labelcref{eq:Shapley-equation}
  applied to the one-player game with Shapley operator $T^{\sigma^*}$, and the second
  inequality stems from the monotonicity and the additive homogeneity of $T^{\sigma^*}$,
  along with the fact that $0 \geq u - \supnorm{u} \, \unit$.

  Let $k_0$ be such that $2 \supnorm{u} \leq k_0 \varepsilon$.
  By construction of $\sigma^*$, we deduce that for all integers $k \geq k_0$,
  for all state $i$, and for all strategy $\tau$ of player \Min, we have
  \[
    \gamma^k_i(\sigma^*,\tau) \geq \lambda - \varepsilon - \frac{2 \supnorm{u}}{k} \geq
    \lambda - 2 \varepsilon .
  \]
  With dual arguments, we show that there exists a stationary strategy $\tau^*$ of
  player \Min such that 
  \[
    \gamma^k_i(\sigma,\tau^*) \leq \lambda + \varepsilon + \frac{2 \supnorm{u}}{k} \leq
    \lambda + 2 \varepsilon
  \]
  for all $k \geq k_0$, for all $i \in \state$, and for all $\sigma \in \Scal$.
  This proves that $\lambda$ is the uniform value of $\Gamma$ for every initial state, and
  that $(\sigma^*,\tau^*)$ is a pair of stationary uniform $2 \varepsilon$-optimal strategies.
\end{proof}

\medskip
We conclude this subsection with a sufficient condition for the solvability of
the ergodic equation~\labelcref{eq:acoe}, hence for the existence of the uniform value.
To that purpose, let us introduce the following two definitions.
First, {\em Hilbert's seminorm} of a vector $x \in \R^n$ is defined by
\begin{equation}
  \label{eq:Hilbert-seminorm}
  \Hilbert{x} := \max_{i \in \state} x_i - \min_{i \in \state} x_i .
\end{equation}
Like any seminorm, it is a nonnegative function which is subadditive and absolutely
homogeneous (i.e., $\Hilbert{\alpha \, x} = |\alpha| \, \Hilbert{x}$ for
all $\alpha \in \R$ and all $x \in \R^n$).
However it fails to be a norm since $\Hilbert{x} = 0$ if and only if $x$ is proportional
to the unit vector of $\R^n$.
It is a standard result that any monotone and additively homogeneous self-map of $\R^n$
is nonexpansive with respect to this seminorm (see e.g., \cite{GG04}).
Second, given a map $T: \R^n \to \R^n$ and real numbers $\alpha, \beta$,
we define the {\em slice space}
\[
  \Scal_\alpha^\beta(T) := \{ x \in \R^n \mid \alpha \unit + x \leq T(x) \leq \beta \unit + x \} .
\]
Observe that if $T$ is monotone and additively homogeneous (in particular if $T$ is the Shapley
operator of a stochastic game), then any slice space is invariant by $T$.
Then we have the following.

\begin{theorem}[Solvability of the ergodic equation, {Cor.\ of \cite[Thm.~9]{GG04}}]
  \label{thm:solvability-acoe}
  Let $T: \R^n \to \R^n$ be a monotone and additively homogeneous map.
  If there exists a nonempty slice space bounded in Hilbert's seminorm, then the ergodic
  equation \labelcref{eq:acoe} is solvable.
\end{theorem}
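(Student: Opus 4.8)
The plan is to recognize that solving the ergodic equation~\labelcref{eq:acoe} amounts to producing an additive eigenvector of $T$, and that the hypothesis furnishes a bounded orbit from which such an eigenvector can be extracted by the nonlinear Perron--Frobenius machinery of \cite{GG04}. Concretely, since $T$ is additively homogeneous it descends to a self-map $\hat T$ of the quotient $\R^n/\R\unit$, on which $\Hilbert{\cdot}$ induces a genuine norm and $\hat T$ is nonexpansive for it (using the nonexpansiveness in Hilbert's seminorm recalled above); a fixed point of $\hat T$ is exactly a solution of~\labelcref{eq:acoe}, the scalar $\lambda$ being recovered as the common value of the coordinates of $T(u)-u$. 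Thus it suffices to exhibit a fixed point of $\hat T$.

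The first step is to convert the boundedness assumption into a bounded orbit. Let $S := \Scal_\alpha^\beta(T)$ be the nonempty slice space bounded in Hilbert's seminorm, and set $M := \sup_{y \in S} \Hilbert{y} < \infty$. Pick any $x_0 \in S$. Because $T$ is monotone and additively homogeneous, $S$ is invariant under $T$, so $T^k(x_0) \in S$ for every $k \geq 0$, and therefore $\Hilbert{T^k(x_0)} \leq M$ for all $k$. In other words, the orbit of $x_0$ is bounded in Hilbert's seminorm, i.e.\ its image in $\R^n/\R\unit$ is a bounded, forward $\hat T$-invariant set.

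The second step is to feed this bounded orbit into \cite[Thm.~9]{GG04}: for a monotone, additively homogeneous map, the boundedness in Hilbert's seminorm of an orbit guarantees the existence of an additive eigenvector, i.e.\ a pair $(\lambda,u)$ with $T(u) = \lambda\unit + u$. Translating back through the quotient yields the desired solution of~\labelcref{eq:acoe}. If one prefers a more hands-on route to the bounded orbit, one may instead iterate $T_\alpha := T - \alpha\unit$ from $x_0$: since $T(x_0) \geq \alpha\unit + x_0$, monotonicity produces a nondecreasing sequence $z_k := T^k(x_0) - k\alpha\unit$ with $\Hilbert{z_k} = \Hilbert{T^k(x_0)} \leq M$, whose mean-centered projections remain in a fixed compact subset of $\R^n/\R\unit$.

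I expect the genuine difficulty to lie entirely in the extraction performed by \cite[Thm.~9]{GG04}. The reduction to a bounded orbit is routine, but passing from a bounded orbit to an actual fixed point is not automatic for maps that are merely nonexpansive with respect to the degenerate seminorm $\Hilbert{\cdot}$ (equivalently, with respect to a non-strictly-convex norm on the quotient): the orbit may fail to converge and only oscillate, so its subsequential limits need not be fixed, and even Cesàro averaging need not land on a fixed point for a nonlinear $\hat T$. Overcoming this is precisely the Perron--Frobenius content we borrow from \cite{GG04}.
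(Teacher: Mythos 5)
The paper offers no proof of this statement beyond the attribution to \cite[Thm.~9]{GG04}, and your argument ultimately rests on that same citation, so the two approaches coincide in substance: the quotient-space reformulation and the invariance of slice spaces that you use are both already recorded in the surrounding text. The one cosmetic difference is that you pass from the bounded invariant slice space to a bounded orbit before invoking the cited theorem, whereas the theorem as the paper uses it takes the nonempty bounded slice space directly as its hypothesis; the orbit detour is correct but superfluous, and if you do prefer that formulation you should attribute the crux to the standard fact that a nonexpansive self-map of a finite-dimensional normed space possessing a bounded orbit has a fixed point (see, e.g., \cite{LN12}), rather than to Theorem~9 itself.
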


\subsection{Ergodic stochastic games}
\label{sec:ergodic-stochastic-games}

Following \cite{AGH15-DCDS}, we introduce an operator-theoretical definition of ergodicity
for stochastic games.

\begin{definition}[Ergodicity of stochastic games]
  \label{def:ergodicity}
  A zero-sum stochastic game $\Gamma$ with Shapley operator $T$ is {\em ergodic} if
  for all perturbation vectors $g \in \R^n$, the ergodic equation \labelcref{eq:acoe} is
  solvable for $g+T$, i.e., for the Shapley operator of the perturbed game where,
  for every $(i,a,b) \in K$, the payoff is $g_i + r(i,a,b)$.
\end{definition}

\begin{example}
  \label{ex:basic-example}
  Let us illustrate our notion of ergodicity with the following very basic Shapley operators
  defined on $\R^2$ by
  \[
    T^\square(x) = \begin{pmatrix} x_1 \\ x_2 \end{pmatrix} , \quad 
    T^\ocircle(x) = \begin{pmatrix} x_2 \\ x_1 \end{pmatrix} , \quad 
    T^\bigtriangleup(x) = \begin{pmatrix} x_1 \vee x_2 \\ x_1 \wedge x_2 \end{pmatrix} ,
  \]
  where $\vee$ stands for $\max$ and $\wedge$ for $\min$.
  \begin{enumerate}[label=(\alph*),leftmargin=0pt,itemindent=1.5\parindent,labelwidth=1.5\parindent]
    \item For $g+T^\square$, the ergodic equation is solvable if and only if
      $g$ is a constant vector (and then any vector $u$ is a solution).
      Hence $T^\square$ is not ergodic (in the sense that the game whose Shapley operator
      is $T^\square$ is not ergodic).
    \item For $g+T^\ocircle$, the ergodic equation is solvable with every $g \in \R^2$.
      The solutions are then characterized by $\lambda = \frac{1}{2} (g_1 + g_2)$ and
      $u_1 - u_2 = \frac{1}{2} (g_1 - g_2)$.
      Hence $T^\ocircle$ is ergodic.
    \item For $g+T^\bigtriangleup$, the ergodic equation is solvable if and only if
      $g_1 \leq g_2$.
      If $g_1 < g_2$, then the solutions are characterized by
      $\lambda = \frac{1}{2} (g_1 + g_2)$ and $u_1 - u_2 = \frac{1}{2} (g_1 - g_2)$.
      If $g_1 = g_2$, the solutions satisfy $\lambda = g_1 = g_2$ and
      $u_1 \geq u_2$.
      Hence $T^\bigtriangleup$ is not ergodic.
  \end{enumerate}
\end{example}

In \cite{AGH15-DCDS}, several equivalent characterizations of ergodicity are given:
in terms of the recession operator
$\widehat{T}(x) := \lim_{\rho \to +\infty} \rho^{-1} T(\rho x)$
and the asymptotic value of the perturbed games (Theorem~3.1); in graph-theoretical terms
(Theorem~5.3); in game-theoretical terms (Proposition~5.1).
Note that, as discussed in the latter reference, all this equivalent criteria extend
the classical notion of ergodicity for finite Markov chains.
However, these results only apply to stochastic games with {\em bounded} payoffs.
They are all derived from a result that establishes a relation between ergodicity and the set
of fixed points of $\widehat{T}$ -- a game (with bounded payoffs) being ergodic if and only if
this fixed-point set is reduced to a line.
In general, this equivalence fails, as illustrated in \Cref{ex:ergodicity-perfect-info}, and so, the results
of \cite{AGH15-DCDS} cannot be readily extended, for instance to games with unbounded payoffs.
Thus, the wish to find a statement for ergodicity which applies to any kind of stochastic games
has motivated the formulation of \Cref{def:ergodicity}.
In this regard, one of our main results, which we state hereafter, can be seen as a generalization of
Theorem~3.1, {\em ibid.}

\begin{theorem}[Ergodicity and slice spaces]
  \label{thm:ergodicity-slice-spaces}
  A stochastic game is ergodic if and only if all the slice spaces of its Shapley operator
  are bounded in Hilbert's seminorm.
\end{theorem}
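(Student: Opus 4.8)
The plan is to translate both sides of the equivalence into statements about the map $G := \id - T$ and its behaviour on the quotient space $V := \R^n/\R\unit$, on which Hilbert's seminorm descends to a genuine norm. Since $x \in \Scal_\alpha^\beta(T)$ is equivalent to $G(x) \in [-\beta,-\alpha]^n$, the condition ``all slice spaces are bounded in Hilbert's seminorm'' means precisely that $G$ pulls bounded boxes back to Hilbert-bounded sets. On the other hand, rewriting the ergodic equation for $g+T$ as $G(u) = g - \lambda\unit$ shows that it is solvable iff $g \in \range(G) + \R\unit$; hence the game is ergodic iff $\range(G) + \R\unit = \R^n$. Because $T$ is monotone and additively homogeneous, $G$ is constant on the cosets of $\R\unit$ and descends to $\bar G = \id_V - \bar T$, where $\bar T$ is nonexpansive for Hilbert's seminorm; thus $\bar G$ is $m$-accretive on the finite-dimensional normed space $V$, and ergodicity is equivalent to surjectivity of $\bar G$.

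For the implication ``all slice spaces bounded $\Rightarrow$ ergodic'' I would argue directly, avoiding the accretive machinery. Fix a perturbation $g \in \R^n$ and set $g_{\min} = \min_i g_i$, $g_{\max} = \max_i g_i$. The map $g+T$ is again monotone and additively homogeneous, so \Cref{thm:solvability-acoe} applies once I exhibit one nonempty slice space of $g+T$ that is Hilbert-bounded. From $g_{\min}\unit \le g \le g_{\max}\unit$ one gets the inclusion $\Scal_\alpha^\beta(g+T) \subseteq \Scal_{\alpha - g_{\max}}^{\beta - g_{\min}}(T)$, whose right-hand side is Hilbert-bounded by hypothesis; and choosing $\alpha \le \min_i\bigl(g_i + T_i(0)\bigr)$ and $\beta \ge \max_i\bigl(g_i + T_i(0)\bigr)$ makes $0$ a member, so the slice space is nonempty. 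Then \Cref{thm:solvability-acoe} yields a solution for $g+T$, and since $g$ was arbitrary the game is ergodic.

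The converse is the substantial direction, which I would prove in contrapositive form. Suppose some slice space $\Scal_\alpha^\beta(T)$ is unbounded in Hilbert's seminorm, and pick $x^k$ with $\Hilbert{x^k} \to \infty$ and $G(x^k) \in [-\beta,-\alpha]^n$. Passing to $V$, the images $\bar y^k := \pi(G(x^k))$ stay in a fixed bounded set, so after extraction $\bar y^k \to \bar y^*$, while their preimages $\bar x^k$ satisfy $\|\bar x^k\| = \Hilbert{x^k} \to \infty$. Thus $\bar G^{-1}$ fails to be locally bounded at $\bar y^*$. Invoking the fact that an $m$-accretive operator is locally bounded at every interior point of its range, I conclude $\bar y^* \notin \intr(\range \bar G)$; since a surjective operator would have every point in the interior of its range, this forces $\range \bar G \neq V$. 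Choosing any $g$ with $\pi(g) \notin \range \bar G$ then makes the ergodic equation for $g+T$ unsolvable, so the game is not ergodic. (That this mechanism is sharp can be checked on $T^\bigtriangleup$ of \Cref{ex:basic-example}, where $\range \bar G$ is exactly the half-line corresponding to $g_1 \le g_2$.)

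The main obstacle is the accretive input in the last paragraph: the passage from ``$\bar G^{-1}$ is not locally bounded at $\bar y^*$'' to ``$\bar y^* \notin \intr(\range \bar G)$'', together with the convexity-type control of $\range \bar G$ needed to place a perturbation outside it. This is precisely the content of the surjectivity analysis for $m$-accretive maps carried out in \Cref{sec:accretive-mappings}, and the cleanest way to finish is to package the whole argument as an application of the general stability result \Cref{thm:stability-acoe}, reading ergodicity of the game as stability of fixed-point existence for $T$ under all additive perturbations.
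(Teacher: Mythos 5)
Your proposal is correct and follows essentially the same route as the paper: the ``if'' direction is the paper's own argument (exhibit a nonempty slice space of $g+T$ containing $0$ and included in a slice space of $T$, then apply \Cref{thm:solvability-acoe}), and your contrapositive for the ``only if'' direction is exactly the content of \Cref{prop:local-boundedness} and \Cref{thm:surjectivity-condition} applied to $\id-[T]$ on $(\TP^n,\Hnorm)$, packaged in the paper as \Cref{cor:stability-FP} and \Cref{thm:stability-acoe}. The only nitpick is the phrasing ``an $m$-accretive operator is locally bounded at every interior point of its range'': the precise statement you need is that the \emph{inverse} $\bar G^{-1}$, being coaccretive, is locally bounded at interior points of its \emph{domain} $\range\bar G$, which is how you in fact use it.
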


We mention that the novelty of this result lies in the necessity of the second statement.
Indeed, it is easy to deduce the sufficient part from \Cref{thm:solvability-acoe},
as the subsequent proof shows.

\begin{proof}[Proof -- ``if'' part]
  Let $\Gamma$ be a stochastic game for which all the slice spaces of its Shapley operator $T$
  are bounded in Hilbert's seminorm.
  Let $g \in \R^n$ be a state-dependent perturbation of the payoff function of $\Gamma$.
  We first notice that $0$ is in the slice space $\Scal_\alpha^\beta(g+T)$
  for $\beta = -\alpha = \supnorm{g+T(0)}$.
  Hence the latter is nonempty.
  Then, denoting by $m = \supnorm{g}$, we have for every $x \in \Scal_\alpha^\beta(g+T)$,
  \[
    (\alpha - m) \unit + x \leq -m \unit + g + T(x) \leq T(x) \leq
    m \unit + g + T(x) \leq (\beta + m) \unit + x .
  \]
  This proves that $\Scal_\alpha^\beta(g+T)$ is included in $\Scal_{\alpha-m}^{\beta+m}(T)$,
  hence bounded in Hilbert's seminorm. 
  It follows from \Cref{thm:solvability-acoe} that the ergodic equation \labelcref{eq:acoe}
  is solvable for $g+T$, and consequently that $\Gamma$ is ergodic.
\end{proof}

Let us now illustrate the result, first with the basic Shapley operators of
\Cref{ex:basic-example}, and then with a more involved stochastic game for which
the results in \cite{AGH15-DCDS} do not apply.

\begin{example}
  Consider the operators introduced in \Cref{ex:basic-example}.
  \begin{enumerate}[label=(\alph*),leftmargin=0pt,itemindent=1.5\parindent,labelwidth=1.5\parindent]
    \item The slice spaces of $T^\square$ are trivial, that is, $\Scal_\alpha^\beta(T^\square)$ is
      either empty if $\alpha > \beta$, or equal to the full space if $\alpha \leq \beta$.
      Hence, since $T^\square$ has slice spaces unbounded in Hilbert's seminorm: it is not ergodic.
    \item For $T^\ocircle$, the slice space $\Scal_\alpha^\beta(T^\ocircle)$ is nonempty if
      and only if $\alpha \leq 0 \leq \beta$.
      In that case, it is equal to the set of points $x$ such that
      $|x_1 - x_2| \leq -\alpha \wedge \beta$.
      Hence the slice space is bounded in Hilbert's seminorm by $-\alpha \wedge \beta$,
      which proves that $T^\ocircle$ is ergodic.
    \item For $T^\bigtriangleup$, if the slice space $\Scal_\alpha^\beta(T^\bigtriangleup)$
      is nonempty, i.e., if $\alpha \leq \beta$, then it contains all points $x$ such that
      $x_1 \geq x_2$.
      Hence it is not bounded in Hilbert's seminorm and $T^\bigtriangleup$ is not ergodic.
  \end{enumerate}
\end{example}

\begin{example}
  \label{ex:ergodicity-perfect-info}
  Consider the perfect-information stochastic game with two states, whose Shapley operator is
  given by
  \[
    T(x) = \left(
    \begin{gathered}
      \sup_{0 < p \leq 1} \big( 2 (1-p) + \log p + (1-p) x_1 + p x_2 \big) \\
      \inf_{0 < p \leq 1} \big( 2 (p-1) - \log p + p x_1 + (1-p) x_2 \big)
    \end{gathered}
    \right) , \quad x \in \R^2 .
  \]
  Player \Max controls the first state and player \Min the second.
  In state~1, player \Max chooses an action $p \in (0,1]$ which yields the current payoff
  $2 (1-p) + \log p$ and induces a probability $p$ to switch to state~2.
  The payoff in state~1 is positive for all $p \in (p_0,1)$ (where $p_0 \approx 0.2032$),
  it attains its maximum at $p = 1/2$ and tends to $-\infty$ as $p$ tends to $0$.
  Hence, securing a positive stage payoff entails a positive probability to switch to state~2,
  which is controlled by player \Min.
  A dual interpretation holds for player \Min in state~2.

  Letting $h: \R \to \R$ be the function defined by
  \[
    h(z) = \sup_{0 < p \leq 1} \big( 2 (1-p) + \log p + p z \big) = 
    \begin{cases}
      1 - \log(2-z)  \enspace & (z \leq 1) \\
      z \quad & (z \geq 1)
    \end{cases}, \quad z \in \R,
  \]
  we can write $T$ as 
  \[
    T(x) = \left(
    \begin{gathered}
      h(x_2-x_1) + x_1 \\
      - h(x_2-x_1) + x_2
    \end{gathered}
    \right) , \quad x \in \R^2 .
  \]
  It is then easy to check that all the slice spaces of $T$ are bounded in Hilbert's
  seminorm, hence that the game is ergodic.

  Furthermore, for all $x \in \R^2$ such that $x_2 \leq x_1$, the recession operator of
  $T$ is given by
  \[
    \widehat{T}(x) := \lim_{\rho \to +\infty} \frac{T(\rho x)}{\rho} = x .
  \]
  Thus, the fixed-point set of $\widehat{T}$ is not reduced to the line $\R \unit$,
  which shows that the results of \cite{AGH15-DCDS} do not apply to this game.
\end{example}

\medskip
In the conference paper \cite{AGH15-CDC} -- a preprint of a longer version is available
in \cite{AGH18} --
the authors give a combinatorial criterion for the boundedness in Hilbert's
seminorm of all the slice spaces of any monotone additively homogeneous self-map of $\R^n$.
The combination of that result, which involves a pair of directed hypergraphs, with
\Cref{thm:ergodicity-slice-spaces} provides a generalization of the graph-theoretical
ergodicity condition in \cite[Thm~5.3]{AGH15-DCDS}.
Similarly to Proposition~5.1, {\em ibid.}, we can then translate this condition in
game-theoretical terms.
With this end in view, let us introduce the notion of {\em dominion}, informally speaking
a subset of states that one player can make invariant for the state process.

\begin{definition}[Dominion]
  \label{def:dominions}
  Given a stochastic game $\Gamma$, we call {\em dominion} of player \Max (resp., \Min)
  a nonempty subset of states $D$ for which player \Max (resp., \Min) has a strategy
  such that from any initial position in $D$, the state remains almost surely in $D$
  at all stages, whatever strategy the other player chooses.
\end{definition}

We mention that an equivalent notion appeared in the algorithmic game theory literature,
first in \cite{GL89}, then in \cite{BEGM10}.
We further mention that the name ``dominion'' was introduced in \cite{JPZ08} 
with a definition slightly stronger that ours.

\begin{example}
  \label{ex:dominions}
  Let us illustrate the notion of dominion by considering the following stochastic game $\Gamma$.
  It has $3$ states and the action sets are $[0,1]$ for both players in all states.
  We next identify probability measures on $\R^3$ with vectors in the standard
  simplex of $\R^3$. 
  The transition probabilities of $\Gamma$ are given, for all actions $a \in [0,1]$ of
  player \Max and $b \in [0,1]$ of player \Min, by
  \[
    p(1,a,b) =
    \begin{pmatrix}
      \gamma a \\ b \, (1 - \gamma a) \\ (1-b) \, (1 - \gamma a)
    \end{pmatrix} \mkern-5mu , \;
    p(2,a,b) =
    \begin{pmatrix}
      \gamma b \\ a \, (1 - \gamma b) \\ (1-a) \, (1 - \gamma b)
    \end{pmatrix} \mkern-5mu , \;
    p(3,a,b) =
    \begin{pmatrix}
      0 \\ 0 \\ 1
    \end{pmatrix} \mkern-5mu ,
  \]
  where $\gamma$ is a fixed parameter in $(0,1)$.
  Since dominions are only defined by the dynamics of the state, it is not necessary to
  specify here the payoff function of $\Gamma$.
  It is then easy to check that the dominions of player \Max are
  \[
    \{3\} , \quad \{1,2,3\} ,
  \]
  whereas the dominions of player \Min are
  \[
    \{3\} , \quad \{1,3\} , \quad \{2,3\} , \quad \{1,2,3\} .
  \]
\end{example}

We can now give a game-theoretical characterization of ergodicity, which extends
\cite[Prop.~5.1]{AGH15-DCDS}.

\begin{theorem}[Ergodicity and dominions]
  \label{thm:ergodicity-dominions}
  A stochastic game satisfying \Cref{asm:compact-continuous} is ergodic if
  and only if the players do not have disjoint dominions.
\end{theorem}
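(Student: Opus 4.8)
The plan is to prove the theorem by combining \Cref{thm:ergodicity-slice-spaces} with the combinatorial criterion of the companion work \cite{AGH15-CDC,AGH18}, and then translating the resulting condition on the Shapley operator into the language of dominions, much as \cite[Prop.~5.1]{AGH15-DCDS} does in the bounded case. By \Cref{thm:ergodicity-slice-spaces}, the game $\Gamma$ is ergodic if and only if every slice space of its Shapley operator $T$ is bounded in Hilbert's seminorm. The companion work characterizes this boundedness through a pair of directed hypergraphs $\mathcal{G}_{\mathrm{Max}}$ and $\mathcal{G}_{\mathrm{Min}}$ attached to $T$, the criterion being, in essence, that all slice spaces are bounded precisely when there do not exist two disjoint nonempty subsets of $\state$, one invariant (``stable'') in $\mathcal{G}_{\mathrm{Max}}$ and the other stable in $\mathcal{G}_{\mathrm{Min}}$. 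Granting this, the theorem reduces to the single assertion that, under \Cref{asm:compact-continuous}, \emph{a nonempty set of states is stable in $\mathcal{G}_{\mathrm{Max}}$ (resp.\ $\mathcal{G}_{\mathrm{Min}}$) if and only if it is a dominion of player \Max (resp.\ \Min)} in the sense of \Cref{def:dominions}; taking contrapositives then gives the stated equivalence.

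The core of the work is thus this correspondence, which I would establish in two steps. First, I would reduce hypergraph stability to a one-step support condition: a nonempty $D \subseteq \state$ is stable in $\mathcal{G}_{\mathrm{Max}}$ exactly when, for every $i \in D$, player \Max has a mixed action $\mu \in \Delta(A_i)$ with $\sum_{j \in D} p(j \mid i,\mu,\nu) = 1$ for all $\nu \in \Delta(B_i)$, i.e.\ a one-stage action keeping the next state inside $D$ against any response of \Min. Under \Cref{asm:compact-continuous} the recession operator $\widehat{T}$ is well defined and evaluates, at the indicator vector $\mathbf{1}_D$, to the worst-case staying probability $\widehat{T}_i(\mathbf{1}_D) = \max_{\mu} \min_{\nu} \sum_{j \in D} p(j \mid i,\mu,\nu)$, so that stability amounts to $\widehat{T}_i(\mathbf{1}_D) = 1$ for all $i \in D$, which is the hypergraph encoding of the support condition. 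Second, I would match this one-step condition with \Cref{def:dominions}. The implication dominion $\Rightarrow$ one-step condition is immediate, since a strategy keeping the state in $D$ must, from each $i \in D$ at the first stage, avoid leaking out of $D$ against every action of the opponent. For the converse, selecting for each $i \in D$ an action $\mu_i$ satisfying the support condition and playing it stationarily yields, by induction over the stages, a strategy that keeps the state almost surely in $D$ from any initial position in $D$ whatever \Min does; hence $D$ is a dominion. The measurable selection of the $\mu_i$ is guaranteed by the compactness and semicontinuity hypotheses of \Cref{asm:compact-continuous}, and the symmetric statements hold for \Min.

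I expect the main obstacle to be precisely this interplay between the combinatorial viewpoint (``an action exists at each state'') and the strategic viewpoint (``a single stationary strategy exists''), together with the passage to the recession operator when the payoff is unbounded. Two points require care. On the one hand, identifying $\widehat{T}_i(\mathbf{1}_D)$ with the worst-case staying probability relies on interchanging the limit $\rho \to +\infty$ with the $\sup$--$\inf$: the term $r/\rho$ vanishes, but this must be made uniform, which is exactly what compactness of the action sets and semicontinuity of $r$ and $p$ provide. On the other hand, turning the pointwise existence of dominion-preserving actions into a genuine measurable stationary strategy, and verifying that almost-sure invariance at all stages follows from the one-step condition, again hinges on \Cref{asm:compact-continuous}; this dependence explains why the present theorem, unlike \Cref{thm:ergodicity-slice-spaces}, is stated under that assumption. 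Heuristically, the necessity direction can also be understood directly: given disjoint dominions $D_{\Max}$ and $D_{\Min}$, perturbing the payoff by a large constant $+c$ on $D_{\Max}$ and $-c$ on $D_{\Min}$ lets each player guarantee very different payoffs depending on the initial state, which is incompatible with the constant value forced by a solution of the ergodic equation; this motivates, but does not replace, the rigorous argument through the slice-space criterion.
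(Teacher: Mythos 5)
Your overall architecture coincides with the paper's: combine \Cref{thm:ergodicity-slice-spaces} with the companion criterion for the boundedness of all slice spaces (\Cref{prop:boundedness-slice-spaces}), then identify the distinguished sets of states with dominions; your ``dominion $\Leftrightarrow$ one-step invariance'' equivalence, proved by stationary play and induction, is likewise the content of \Cref{lem:dominion-characterization}. The genuine gap is in how you decode the combinatorial criterion. You posit that a set $D$ is stable for \Max exactly when $\widehat{T}_i(\unit_D)=1$ for all $i\in D$, but the criterion the companion work actually supplies is $\lim_{\kappa\to-\infty}T_i(\kappa\,\unit_{\state\setminus D})>-\infty$, i.e.\ $\lim_{\rho\to+\infty}\bigl(T_i(\rho\,\unit_D)-\rho\bigr)>-\infty$, which is strictly stronger than $\widehat{T}_i(\unit_D)=1$. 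The two conditions genuinely differ: in \Cref{ex:ergodicity-perfect-info} one has $\widehat{T}_1(\unit_{\{1\}})=1$ and $\widehat{T}_2(\unit_{\{1\}})=0$, so your decoding would exhibit the disjoint ``stable'' sets $\{1\}$ and $\{2\}$ and certify non-ergodicity, yet that game is ergodic and $\{1\}$ is not a dominion (every admissible action in state~1 leaks to state~2 with positive probability). The recession-operator criterion is precisely the one from \cite{AGH15-DCDS} that the paper explains does not survive unbounded payoffs; showing that under \Cref{asm:compact-continuous} the one-sided limit condition, the recession condition, and the dominion property all coincide is the substance of the proof (\Cref{lem:dominions} in the paper) and cannot be ``granted''.

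Relatedly, your justification of the identity $\widehat{T}_i(\unit_D)=\max_{\mu}\min_{\nu}p(D\mid i,\mu,\nu)$ --- that $r/\rho$ vanishes uniformly thanks to compactness and semicontinuity --- is false in the regime the theorem targets: item (ii) of \Cref{asm:compact-continuous} only bounds $r(i,\cdot,\cdot)$ on one side, and in \Cref{ex:ergodicity-dominions} the payoff is genuinely unbounded, so there is no uniform control of $r/\rho$. The estimates must be made one variable at a time: fixing a pure action $\bar a$ of \Max, the map $\nu\mapsto r(i,\bar a,\nu)$ is l.s.c.\ on the compact set $\Delta(B_i)$, hence bounded below, which yields the lower bound; fixing $\nu$, the map $\mu\mapsto r(i,\mu,\nu)$ is u.s.c., hence bounded above, which yields an upper bound in terms of $\min_{\nu}\max_{\mu}p(D\mid i,\mu,\nu)$ --- and one then still needs the minimax interchange of \Cref{rem:pure-mixed-actions} to land on $\max_{\mu}\min_{\nu}$ and extract the single invariant mixed action that your one-step condition requires. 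These are exactly the points at which the bounded-payoff argument breaks and where the present proof does its real work.
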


\begin{remark}[The dominion condition for Markov chains and Markov decision processes]
  It is instructive to apply the latter theorem to zero-player or one-player games,
  that is, to finite Markov chains with rewards or Markov decision processes, respectively.
  \begin{enumerate}[label=(\alph*),leftmargin=0pt,itemindent=1.5\parindent,labelwidth=1.5\parindent]
    \item In the first case, the two players are dummies, meaning that they have only
      one possible action in each state.
      Any dominion (of any player) then contains at least one ergodic class of
      the Markov chain and, conversely, any ergodic class is a dominion for both players.
      Thus, the dominion condition in \Cref{thm:ergodicity-dominions} is equivalent to
      the classical ergodicity condition for finite Markov chains.
    \item For Markov decision processes, only one player is a dummy, say player \Min.
      Then, a dominion of this player is an absorbing subset of states for player \Max
      (meaning that the state remains almost surely in this set once it has reached it,
      whatever the strategy of player \Max is).
      In particular, it is also a dominion for player \Max.
      Since the intersection of two absorbing sets is itself absorbing, we deduce that
      the Markov decision process is ergodic if and only if there is a unique minimal
      nonempty absorbing set (with respect to the inclusion partial order) and
      this set has a nonempty intersection with any dominion of player \Max.
  \end{enumerate}
\end{remark}

\begin{example}
  Let us consider the games associated with the operators introduced in \Cref{ex:basic-example}.
  \begin{enumerate}[label=(\alph*),leftmargin=0pt,itemindent=1.5\parindent,labelwidth=1.5\parindent]
    \item For $T^\square$, this is the trivial Markov chain, where every state is
      absorbing.
      Hence, every state is a dominion of both players and thus the game is not ergodic.
    \item For $T^\ocircle$, this is also a Markov chain, which is irreducible.
      Hence, each player has only one dominion, which is the set of all state.
      Thus the game is ergodic.
    \item For $T^\bigtriangleup$, the nontrivial dominions are $\{1\}$ for player \Max and
      $\{2\}$ for player \Min.
      We deduce from \Cref{thm:ergodicity-dominions} that the game is not ergodic.
  \end{enumerate}
\end{example}

\begin{example}
  \label{ex:ergodicity-dominions}
  Consider the stochastic game $\Gamma$ (partially) introduced in \Cref{ex:dominions}.
  Assume that the payoff function is given by
  \[
    r(1,a,b) = \frac{a b}{a^3 + b^3} , \quad r(2,a,b) = \frac{- a b}{a^3 + b^3} , \quad
    r(3,a,b) = 0 ,
  \]
  for $(a,b) \neq (0,0)$ and $r(1,0,0) = r(2,0,0) = r(3,0,0) = 0$.
  Then, $\Gamma$ satisfies \Cref{asm:compact-continuous}, and since the players do not have
  disjoint dominions, it is ergodic.
  Further observe that, since the payoff function is unbounded, the results of \cite{AGH15-DCDS}
  cannot be applied.
\end{example}

\begin{remark}
  \label{rem:structural-property}
  \Cref{thm:ergodicity-dominions} suggests that ergodicity is a structural property, in the sense
  that it only depends on the transition function of the game, and not on the payoff function.
  We underline that this is true if and {\em only if} \cref{asm:compact-continuous} holds.
  Indeed, the transition structure of the game in \cref{ex:ergodicity-perfect-info} is essentially
  the same as the transition structure of a game with Shapley operator $T^\bigtriangleup$
  (\cref{ex:basic-example}): in the former example, if the payoffs were set to zero, the Shapley
  operator would be exactly equal to $T^\bigtriangleup$.
  However, the latter game is not ergodic, whereas the former (which does not satisfy
  \cref{asm:compact-continuous}) is.
\end{remark}

\section{Surjectivity of accretive mappings}
\label{sec:accretive-mappings}

The solvability of ergodic equation~\labelcref{eq:acoe} can be seen, up to an additive constant,
as a fixed-point problem.
In this perspective, it is useful to work in the quotient vector space $\R^n / \R \unit$.
Our definition of ergodicity (\Cref{def:ergodicity}) then boils down to the existence of
a fixed point for all additive perturbations in $\R^n / \R \unit$ of $[T]$, the quotiented
version of the Shapley operator $T$ or, equivalently, to the surjectivity of
the quotiented map $\id - [T]$, where $\id$ denotes the identity map.
It turns out that the quotiented map $[T]$ is nonexpansive, which implies that
$\id - [T]$ is accretive (see \Cref{def:accretive} below).
This motivates us to study the existence stability of a fixed point, under additive perturbations,
for nonexpansive maps in (finite-dimensional) Banach spaces, as well as the related
and more general problem of the surjectivity of accretive set-valued mappings.
The present section is dedicated to the study of the latter problem, whereas the former is
investigated in the next section.

In the remainder, $\Xcal$ refers to a finite-dimensional real vector space
equipped with a given norm, denoted by $\norm{}$.
Its dual space, $\Xcal^*$, is equipped with the dual norm, denoted by $\norm{}_*$, and
$\<\cdot,\cdot>$ refers to the duality product.

\subsection{Preliminaries on accretive mappings}

\subsubsection*{Set-valued analysis}
We recall here some basic definitions about set-valued mappings and refer the reader to
the monograph \cite{RW09} for more details on the subject.

Given a finite-dimensional real vector space $\Ycal$, a set-valued mapping
$A: \Xcal \toto \Ycal$ is a map sending each point of $\Xcal$ to a subset of $\Ycal$.
The {\em domain} of $A$ is the subset of $\Xcal$ defined by
$\dom(A) := \{ x \in \Xcal \mid A(x) \neq \emptyset \}$.
The {\em range} of $A$ is the subset of $\Ycal$ defined by
$\range(A) := \bigcup_{x \in \Xcal} A(x)$, and the {\em image} of any subset
$\Ucal \subset \Xcal$ is the subset of $\Ycal$ given by
$A(\Ucal) = \bigcup_{x \in \Ucal} A(x)$.

The {\em inverse} of $A$, denoted by $A^{-1}$, is the set-valued mapping from $\Ycal$ to $\Xcal$
sending any element $y \in \Ycal$ to the set $\{ x \in \Xcal \mid y \in A(x) \}$,
i.e., such that $x \in A^{-1}(y)$ if and only if $y \in A(x)$.
In particular, we have $(A^{-1})^{-1} = A$ and $\range(A) = \dom(A^{-1})$.
Also note that the image of any set $\Vcal \subset \Ycal$ by $A^{-1}$ is given by
$A^{-1}(\Vcal) = \{x \in \Xcal \mid A(x) \cap \Vcal \neq \emptyset \}$.

\medskip
We next define notions of continuity for set-valued mappings.
The {\em outer limit}, $\limsup_{x \to \bar{x}} A(x)$, and the {\em inner limit},
$\liminf_{x \to \bar{x}} A(x)$, of $A: \Xcal \toto \Ycal$ at any point $\bar{x} \in \Xcal$
are subsets of $\Ycal$ defined respectively by the following:
\begin{align*}
  y \in \limsup_{x \to \bar{x}} A(x) & \iff 
  \exists x_k \to \bar{x} , \enspace \exists y_k \to y , \enspace
  \forall k \in \N , \enspace y_k \in A(x_k) , \\
  y \in \liminf_{x \to \bar{x}} A(x) & \iff 
  \forall x_k \to \bar{x} , \enspace \exists y_k \to y , \enspace
  \exists k_0 \in \N , \enspace \forall k \geq k_0 ,
  \enspace y_k \in A(x_k) .
\end{align*}
Then we can define the notions of outer and inner semicontinuity for set-valued mappings.

\begin{definition}[Semicontinuity of set-valued mappings]
  \label{def:semicontinuity}
  A set-valued mapping $A: \Xcal \toto \Ycal$ is {\em outer semicontinuous} (o.s.c.\ for short)
  at $\bar{x} \in \Xcal$ if
  \[
    \limsup_{x \to \bar{x}} A(x) \subset A(\bar{x})
  \]
   and {\em inner semicontinuous} (i.s.c.\ for short) at $\bar{x} \in \Xcal$ if
  \[
    \liminf_{x \to \bar{x}} A(x) \supset A(\bar{x}) .
  \]
  It is {\em continuous} at $\bar{x}$ if it is both outer and inner semicontinuous.

  These notions are invoked relative to any subset $\Ucal$ of $\Xcal$ containing $\bar{x}$
  if the properties hold when restricting the convergence $x \to \bar{x}$ to $\Ucal$, i.e.,
  when all the sequences $x_k \to \bar{x}$ are required to lie in $\Ucal$.
\end{definition}

Note that if $A$ is i.s.c.\ at a point $x \in \dom(A)$, then
$x \in \intr \left( \dom(A) \right)$, the interior of $\dom(A)$.
Let us further mention that outer semicontinuity differs from upper semicontinuity,
another notion commonly found in the literature (see e.g., \cite{AF09}).
However, when a set-valued mapping is locally bounded, these two definitions agree.
As for lower semicontinuity, it is equivalent to inner semicontinuity.
The reader can find a discussion on these aspects in \cite[Ch.~5]{RW09} (see in particular
the Commentary Section).

\subsubsection*{Duality mapping}
\label{sec:duality-mapping}
Duality mappings are set-valued mappings that appears in the study of Banach spaces (see e.g.,
\cite{Pet70}) or in applications involving nonexpansive and monotone-like operators (e.g.,
evolution equations \cite{Bro76}, fixed-point approximation \cite{Rei94}). 
In this paper, we only consider {\em normalized duality mappings} and refer the reader to \cite{Cio90}
and the references therein for a general view on the subject.

\begin{definition}[Duality mapping]
  The {\em (normalized) duality mapping} on the normed space $(\Xcal, \norm{})$ is
  the set-valued mapping $J: \Xcal \toto \Xcal^*$ defined by
  \[
    J(x) = \{x^* \in \Xcal^* \mid \norm{x^*}_* = \norm{x} , \;
    \<x,x^*> = \norm{x}^2 \}, \quad x \in \Xcal .
  \]
\end{definition}

Note that, by the Hahn-Banach separation theorem, $\dom(J) = \Xcal$, i.e., $J(x)$ is nonempty
for every vector $x \in \Xcal$.
Furthermore, Asplund characterized in \cite[Thm.~1]{Asp67} the image of any point by a duality
mapping as the subdifferential at this point of some convex function.
This  entails that $J(x)$ is a compact convex subset of $\Xcal^*$ for every $x \in \Xcal$.
Also, it readily stems from the definition that $J$ is homogeneous of degree one, i.e.,
for every $x \in \Xcal$ and every $\lambda \in \R$, we have $J(\lambda x) = \lambda J(x)$.
Finally, a straightforward application of the definitions leads to the following lemma.

\begin{lemma}[Semicontinuity of duality mappings]
  \label{lem:osc-duality-mapping}
  Any normalized duality mapping on a finite-dimensional vector space is outer semicontinuous.
  \qed
\end{lemma}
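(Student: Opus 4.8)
The plan is to verify outer semicontinuity directly from the definition of the outer limit. Fix a point $\bar{x} \in \Xcal$ and let $y \in \limsup_{x \to \bar{x}} J(x)$. By definition of the outer limit, there exist sequences $x_k \to \bar{x}$ in $\Xcal$ and $y_k \to y$ in $\Xcal^*$ such that $y_k \in J(x_k)$ for every $k \in \N$. Unwinding the definition of the duality mapping, this means $\norm{y_k}_* = \norm{x_k}$ and $\<x_k,y_k> = \norm{x_k}^2$ for all $k$. The goal is to show that both relations pass to the limit, yielding $\norm{y}_* = \norm{\bar{x}}$ and $\<\bar{x},y> = \norm{\bar{x}}^2$, i.e., that $y \in J(\bar{x})$.

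For the first relation, I would invoke the continuity of the norm on $\Xcal$ and of the dual norm on $\Xcal^*$: since $x_k \to \bar{x}$ and $y_k \to y$, one has $\norm{x_k} \to \norm{\bar{x}}$ and $\norm{y_k}_* \to \norm{y}_*$, so passing to the limit in $\norm{y_k}_* = \norm{x_k}$ gives $\norm{y}_* = \norm{\bar{x}}$. For the second relation, I would use the joint continuity of the duality product: from the estimate
\[
  \big| \<x_k,y_k> - \<\bar{x},y> \big| \leq \norm{x_k - \bar{x}} \, \norm{y_k}_* + \norm{\bar{x}} \, \norm{y_k - y}_* ,
\]
the right-hand side tends to $0$ because $\norm{x_k - \bar{x}} \to 0$, $\norm{y_k - y}_* \to 0$, and the sequence $(\norm{y_k}_*)_k$ is bounded (being convergent). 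Hence $\<x_k,y_k> \to \<\bar{x},y>$, and passing to the limit in $\<x_k,y_k> = \norm{x_k}^2$ yields $\<\bar{x},y> = \norm{\bar{x}}^2$.

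Combining the two limits shows that $y$ satisfies both defining conditions of $J(\bar{x})$, so $y \in J(\bar{x})$. As $y$ was an arbitrary element of $\limsup_{x \to \bar{x}} J(x)$, this gives $\limsup_{x \to \bar{x}} J(x) \subset J(\bar{x})$, which is exactly outer semicontinuity at $\bar{x}$; since $\bar{x}$ was arbitrary, $J$ is outer semicontinuous. I expect essentially no obstacle here: the only point requiring mild attention is the joint continuity of the bilinear pairing, which in the present finite-dimensional setting is immediate from the boundedness of the convergent sequence $(y_k)_k$ in the dual norm. (The same computation would go through in an infinite-dimensional Banach space once one knows the convergent sequences are norm-bounded, but finite-dimensionality makes this automatic.)
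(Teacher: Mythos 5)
Your proof is correct and is exactly the ``straightforward application of the definitions'' that the paper invokes: the paper gives no written argument for this lemma, and your verification (passing to the limit in the two defining relations of $J$ via continuity of the norms and of the duality pairing) is the intended one. No discrepancies to report.
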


\begin{example}
  Let $\Xcal = \R^n$.
  If $\Xcal$ is equipped with the standard Euclidean norm, then $J$ is the identity map.
  More generally, if $\Xcal$ is equipped with an $L^p$ norm with $1 < p < +\infty$,
  then $J$ is single-valued and given for all $x \neq 0$ by
  \[
    J(x) = \frac{\norm{x}_p}{\norm{x}_q} \, x
  \]
  where $q$ is the positive real number defined by $p^{-1} + q^{-1} = 1$.
\end{example}

\begin{example}
  Assume that the norm $\norm{}$ on $\Xcal$ is polyhedral (e.g., an $L^1$ or
  $L^\infty$ norm), meaning that there is a finite symmetric family
  $\Wcal \subset \Xcal^*$ of linear forms on $\Xcal$ such that
  \[
    \norm{x} = \max_{x^* \in \Wcal} \<x,x^*> , \quad \forall x \in \Xcal .
  \]
  For instance, when $\Xcal$ is the Euclidean space $\R^n$ and $\{ \unit_i \}_{i \in [n]}$
  denotes its standard basis, the family
  $\Wcal_\infty = \{ \varepsilon_i \unit_i \mid i \in [n] , \; \varepsilon_i = \pm 1\}$
  defines the standard $L_\infty$ norm, whereas the family
  $\Wcal_1 = \left\{ \sum_{i = 1}^n \varepsilon_i \unit_i \mid
  (\varepsilon_i)_{i \in [n]} \in \{ \pm 1 \}^n \right\}$ defines the $L_1$ norm.
  Then, one may check that, for all $x \in \Xcal$,
  \[
    J(x) = \norm{x} \, \co \big\{ x^* \in \Wcal \mid \<x,x^*> = \|x\| \big\} ,
  \]
  where $\co(\Ucal)$ denotes the convex hull of any subset $\Ucal$ of a vector space.
\end{example}

\subsubsection*{Accretivity}
\label{sec:accretivity}
Accretive operators are generalization in Banach spaces of monotone operators in Hilbert spaces.
They appear in particular in the study of nonlinear evolution equations (see e.g., \cite{Bro76}),
and more recently in game theory (\cite{Vig10,SV16}). 

\begin{definition}[Accretive mappings]
  \label{def:accretive}
  A set-valued mapping $A: \Xcal \toto \Xcal$ is {\em accretive} if
  \[
    \forall x, y \in \Xcal, \quad \forall u \in A(x), \quad \forall v \in A(y), \quad
    \exists x^* \in J(x-y), \quad \<u-v,x^*> \geq 0 .
  \]
  If, in addition, $\range (\id + A) = \Xcal$, where $\id$
  denotes the identity map on $\Xcal$, then $A$ is {\em $m$-accretive}.
\end{definition}

\begin{definition}[Coaccretive mappings]
  \label{def:coaccretive}
  A set-valued maping $A: \Xcal \toto \Xcal$ is {\em coaccretive} if $A^{-1}$ is
  accretive, that is, if
  \[
    \forall x, y \in \Xcal, \quad \forall u \in A(x), \quad \forall v \in A(y), \quad
    \exists x^* \in J(u-v), \quad \<x-y,x^*> \geq 0 .
  \]
\end{definition}

Let us mention that in a Hilbert space, the normalized duality mapping is the identity map,
so that accretive mappings coincide with monotone operators, whereas
$m$-accretive mappings coincide with maximally monotone operators.
In particular, for functions form $\R$ to $\R$, accretive means nondecreasing, whereas
$m$-accretive means continuous and nondecreasing.
To get an intuition about the results in this Section, the reader can think of
this special case.

We conclude this subsection with few remarks about the latter definitions.

\begin{remark}
  \label{rem:accretive-coaccretive}
  \begin{enumerate}[label=(\alph*),leftmargin=0pt,itemindent=1.5\parindent,labelwidth=1.5\parindent]
    \item In the literature, one can also find an equivalent definition of accretivity which
      does not make use of the duality mapping:
      a set-valued mapping $A: \Xcal \toto \Xcal$ is accretive if and only if for every
      $x, y \in \Xcal$, every $u \in A(x)$ and $v \in A(y)$, and every $\lambda > 0$,
      we have $\norm{x-y} \leq \norm{x - y + \lambda (u - v)}$
      (see \cite[Ch.~VI, Prop.~1.3]{Cio90}).
    \item It is known that if $A: \Xcal \toto \Xcal$ is accretive, then
      $\range(\id + \lambda A) = \Xcal$ for {\em every} $\lambda > 0$ if and only if
      $\range(\id + \lambda A) = \Xcal$ for {\em some} $\lambda > 0$
      (see Ch.~VI, Prop.~1.6, {\em ibid.}).
  \end{enumerate}
\end{remark}

\subsection{Surjectivity conditions}

\subsubsection*{Local boundedness of coaccretive mappings}
A set-valued mapping $A: \Xcal \toto \Xcal$ is locally bounded at a point $x \in \Xcal$
if there exists a neighborhood $\Ucal \subset \Xcal$ of $x$ such that $A(\Ucal)$ is bounded.
It is known that in a reflexive Banach space, an accretive mapping is locally bounded
at any point in the interior of its domain (see \cite{FHK72}).
The following result (which is new, as far as we know) shows that this property also
holds for coaccretive mappings, at least in finite dimension.

\begin{proposition}[Local boundedness]
  \label{prop:local-boundedness}
  Let $(\Xcal,\norm{})$ be a finite-dimensional real vector space and let
  $A: \Xcal \toto \Xcal$ be a coaccretive mapping.
  Then, for every point $x$ in the interior of $\dom(A)$, $A$ is locally bounded at $x$.
\end{proposition}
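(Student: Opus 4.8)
The plan is to argue by contradiction, exploiting the defining inequality of coaccretivity together with the compactness of the unit spheres in the finite-dimensional setting. Suppose $A$ fails to be locally bounded at a point $x_0 \in \intr(\dom(A))$. Testing against the shrinking neighborhoods $B(x_0,1/k)$, one then obtains a sequence $x_k \to x_0$ with points $u_k \in A(x_k)$ such that $\norm{u_k} \to \infty$. First I would record the \emph{direction} of this blow-up: since the unit sphere of $\Xcal$ is compact, after passing to a subsequence the normalized images $u_k/\norm{u_k}$ converge to some unit vector $w$.

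The key idea is then to test coaccretivity against a point chosen precisely in this divergence direction $w$. Since $x_0$ lies in the interior of $\dom(A)$, for $t>0$ small enough the point $y := x_0 + t w$ still belongs to $\dom(A)$, so I may fix some $v \in A(y)$. Applying the coaccretivity inequality (\Cref{def:coaccretive}) to the pairs $(x_k,y)$ and $(u_k,v)$ produces, for each $k$, a functional $x_k^* \in J(u_k - v)$ satisfying $\<x_k - y, x_k^*> \geq 0$. By the definition of the duality mapping, $\norm{x_k^*}_* = \norm{u_k - v} \to \infty$, so I would renormalize by setting $\tilde{x}_k^* := x_k^*/\norm{x_k^*}_*$, a sequence of unit dual vectors, and extract a convergent subsequence $\tilde{x}_k^* \to \xi$.

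The crux is to identify the limiting pairing $\<w,\xi>$. From $x_k^* \in J(u_k - v)$ one has $\<u_k - v, x_k^*> = \norm{u_k - v}^2$, and dividing by $\norm{u_k-v}^2$ gives $\<(u_k - v)/\norm{u_k - v}, \tilde{x}_k^*> = 1$ for every $k$. Since the perturbation by the fixed $v$ is negligible as $\norm{u_k} \to \infty$, we have $(u_k - v)/\norm{u_k - v} \to w$, and the duality pairing being jointly continuous in finite dimension, passing to the limit yields $\<w,\xi> = 1$. On the other hand, dividing the coaccretivity inequality by $\norm{x_k^*}_* > 0$ gives $\<x_k - y, \tilde{x}_k^*> \geq 0$, whence, letting $k \to \infty$, $\<x_0 - y, \xi> \geq 0$. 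But $x_0 - y = -t\,w$, so $\<x_0 - y, \xi> = -t \<w,\xi> = -t < 0$, a contradiction.

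I expect the main obstacle to be the bookkeeping around the two successive normalizations and the verification that $\<w,\xi> = 1$: one must ensure that the direction $w$ extracted from the $u_k$ coincides with the limit of $(u_k - v)/\norm{u_k - v}$, and that $y$ is placed in exactly this direction so that the sign of $\<x_0 - y, \xi>$ is forced to be strictly negative. Finite dimensionality enters twice — to extract $w$ and $\xi$ via compactness and to guarantee continuity of the pairing — which explains why the statement is confined to this setting rather than a general reflexive Banach space.
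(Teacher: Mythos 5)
Your proof is correct and follows essentially the same route as the paper's: contradiction via a blow-up sequence, extraction of the divergence direction $w$, testing coaccretivity against a point of $\dom(A)$ shifted along $w$, and a normalization of the dual functionals to reach $\<w,\xi> \leq 0$ versus $\<w,\xi> = 1$. The only (harmless) cosmetic difference is at the last step, where the paper invokes the outer semicontinuity and homogeneity of $J$ to conclude that the limit functional lies in $J(w)$, whereas you obtain $\<w,\xi>=1$ directly by passing to the limit in the identity $\<u_k - v, x_k^*> = \norm{u_k - v}^2$.
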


\begin{proof}
  Toward a contradiction, assume that $A$ is not locally bounded at a point $x$
  in the interior of $\dom(A)$, that is, for all neighborhoods $\Ucal$ of $x$,
  $A(\Ucal)$ is not bounded.
  Then, there exists a sequence $(x_k, y_k)_{k \in \N}$ in $\Xcal \times \Xcal$ such that
  $(x_k)_{k \in \N}$ converges to $x$, $\norm{y_k}$ tends to infinity, and 
  $y_k \in A(x_k)$ for all integers $k$.
  We may assume that $\norm{y_k} > 0$ for all $k \in \N$.
  Since the dimension is finite, the bounded sequence $(y_k / \norm{y_k})_{k \in \N}$
  has a convergent subsequence.
  Let $(y_{n_k} / \norm{y_{n_k}})_{k \in \N}$ be such a subsequence, which converges
  toward some point $w \in \Xcal$.
  
  The point $x$ being in the interior of $\dom(A)$, there exists a scalar $\alpha > 0$
  such that $x + \alpha w \in \dom(A)$.
  Let $\bar{y} \in A(x + \alpha w)$.
  Since $A$ is coaccretive, then for all integers $k$ there is a linear form
  $y^*_k \in J(y_k - \bar{y})$, where $J$ is the duality mapping on $(\Xcal,\norm{})$,
  such that $ \< x_k - ( x + \alpha w), y^*_k > \geq 0$.
  By homogeneity of the duality mapping $J$, for all integers $k$ we have
  \[
    w^*_k = \frac{y^*_k}{\norm{y_k}} \in J \left( \frac{y_k - \bar{y}}{\norm{y_k}} \right) .
  \]
  In particular, the sequence $(w^*_k)_{k \in \N}$ is bounded.
  We also have, for all $k \in \N$,
  \begin{equation}
    \label{eq:accretive-ineq}
    \< x_k - x - \alpha w, w^*_k > \geq 0 .
  \end{equation}
  Let $w^* \in \Xcal^*$ be a cluster point of the bounded subsequence $(w^*_{n_k})_{k \in \N}$.
  Since the subsequence $((y_{n_k}-\bar{y}) / \norm{y_{n_k}})_{k \in \N}$ converges
  toward $w$, and since $J$ is o.s.c.\ (\Cref{lem:osc-duality-mapping}), then we deduce that
  $w^* \in J(w)$.
  On the other hand, \Cref{eq:accretive-ineq} yields $\<w,w^*> \leq 0$,
  a contradiction since $\<w,w^*> = \|w\|^2 = 1$ by definition of the duality mapping.
\end{proof}

By application of the Heine-Borel property for compact sets, we readily get the following.

\begin{corollary}
  \label{cor:compact-image}
  Let $(\Xcal,\norm{})$ be a finite-dimensional real vector space and let
  $A: \Xcal \toto \Xcal$ be a coaccretive mapping.
  Then, the image by $A$ of any compact subset of $\Xcal$ included in the interior of
  $\dom(A)$ is bounded.
\end{corollary}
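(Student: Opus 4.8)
The plan is to deduce this directly from the local boundedness established in \Cref{prop:local-boundedness}, via the standard compactness argument that the phrase ``Heine--Borel property'' already signals. Let $\Kcal \subset \intr(\dom(A))$ be a compact set. For each point $x \in \Kcal$, since $x$ lies in the interior of $\dom(A)$, \Cref{prop:local-boundedness} applies and guarantees that $A$ is locally bounded at $x$: there is an open neighborhood $\Ucal_x \subset \Xcal$ of $x$ such that the image $A(\Ucal_x)$ is bounded. The family $\{ \Ucal_x \}_{x \in \Kcal}$ is then an open cover of $\Kcal$.

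By compactness of $\Kcal$, I would extract a finite subcover, say $\Ucal_{x_1}, \dots, \Ucal_{x_m}$. Since the image of a union of sets is the union of their images, we obtain
\[
  A(\Kcal) \subset A\Big( \bigcup_{j=1}^m \Ucal_{x_j} \Big) = \bigcup_{j=1}^m A(\Ucal_{x_j}) .
\]
The right-hand side is a finite union of bounded subsets of $\Xcal$, hence bounded, and therefore $A(\Kcal)$ is bounded as well.

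There is essentially no obstacle here: the entire substance of the statement resides in \Cref{prop:local-boundedness}, and the corollary is just the familiar promotion of a pointwise (local) boundedness property to a uniform one over a compact set. The only points worth keeping in mind are that the hypothesis $\Kcal \subset \intr(\dom(A))$ is exactly what licenses the application of the local boundedness at \emph{every} point of $\Kcal$, and that the neighborhoods $\Ucal_x$ should be taken open so that $\{ \Ucal_x \}_{x \in \Kcal}$ is a genuine open cover to which the finite-subcover property applies.
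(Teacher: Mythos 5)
Your proof is correct and is exactly the argument the paper intends (the paper simply says the corollary follows ``by application of the Heine--Borel property'' and leaves this routine covering argument implicit). Nothing to add.
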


\subsubsection*{Characterization of surjectivity}
Let us denote by $\dist(0,\Ucal)$ the distance of the origin to any set
$\Ucal \subset \Xcal$, i.e., $\dist(0, \Ucal) = \inf_{x \in \Ucal} \norm{x}$.
We next give a sufficient condition for an $m$-accretive mapping to be surjective.

\begin{theorem}[Sufficient condition of surjectivity, see {\cite[Cor.\ of Thm.~3]{KS80}}]
  \label{thm:m-accretive-surjectivity}
  Let $(\Xcal,\norm{})$ be a finite-dimensional real vector space and let
  $A: \Xcal \toto \Xcal$ be an $m$-accretive set-valued mapping.
  Assume that
  \[
    \lim_{\norm{x} \to \infty} \dist(0, A(x)) = +\infty .
  \]
  Then, $\range(A) = \Xcal$.
\end{theorem}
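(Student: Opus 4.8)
The plan is to solve the inclusion $y \in A(x)$ for an \emph{arbitrary} $y \in \Xcal$ by a Yosida-type approximation, letting a regularization parameter tend to zero and controlling the approximants through the coercivity hypothesis. First I would record two consequences of $m$-accretivity that are used repeatedly. For every $\lambda > 0$ the resolvent $J_\lambda := (\id + \lambda A)^{-1}$ is a single-valued, nonexpansive self-map defined on all of $\Xcal$: if $y_i \in (\id + \lambda A)(x_i)$, accretivity furnishes a form $x^* \in J(x_1 - x_2)$ with $\<(y_1 - x_1) - (y_2 - x_2), x^*> \geq 0$, whence $\norm{x_1 - x_2}^2 \leq \<y_1 - y_2, x^*> \leq \norm{y_1 - y_2}\,\norm{x_1 - x_2}$; and $\range(\id + \lambda A) = \Xcal$ for every $\lambda > 0$ follows from the definition of $m$-accretivity together with \Cref{rem:accretive-coaccretive}. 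A direct corollary is that the graph of $A$ is closed: if $v_k \in A(x_k)$ with $x_k \to \bar{x}$ and $v_k \to \bar{v}$, then $x_k = J_1(x_k + v_k)$, and letting $k \to \infty$, continuity of $J_1$ gives $\bar{x} = J_1(\bar{x} + \bar{v})$, i.e.\ $\bar{v} \in A(\bar{x})$.

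Next, for each $\epsilon \in (0,1]$ I would set $x_\epsilon := J_{1/\epsilon}(y/\epsilon)$, so that $y_\epsilon := y - \epsilon x_\epsilon \in A(x_\epsilon)$; the existence of $x_\epsilon$ is precisely the surjectivity $\range(\id + \frac{1}{\epsilon} A) = \Xcal$. The crucial a priori estimate is a bound on $\epsilon\norm{x_\epsilon}$. Fixing any $x_0 \in \dom(A)$ and $u_0 \in A(x_0)$ and testing accretivity on the pair $(x_\epsilon, x_0)$ yields $x^* \in J(x_\epsilon - x_0)$ with $\<y_\epsilon - u_0, x^*> \geq 0$; expanding $y_\epsilon = y - \epsilon x_\epsilon$ and using $\<x_\epsilon - x_0, x^*> = \norm{x_\epsilon - x_0}^2$ and $\norm{x^*}_* = \norm{x_\epsilon - x_0}$ leads, after dividing by $\norm{x_\epsilon - x_0} > 0$, to $\epsilon\,\norm{x_\epsilon - x_0} \leq \norm{y - u_0} + \epsilon\,\norm{x_0}$. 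Hence $\epsilon\,\norm{x_\epsilon}$ remains bounded as $\epsilon \to 0$, and therefore so does $\dist(0, A(x_\epsilon)) \leq \norm{y_\epsilon} \leq \norm{y} + \epsilon\,\norm{x_\epsilon}$.

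It is here that the coercivity hypothesis does its work. Since $\dist(0, A(x_\epsilon))$ stays bounded, the family $\{x_\epsilon\}_{\epsilon \in (0,1]}$ must be bounded as well, for otherwise $\dist(0, A(x_\epsilon)) \to \infty$ along a subsequence with $\norm{x_\epsilon} \to \infty$. I would then extract $\epsilon_k \to 0$ with $x_{\epsilon_k} \to \bar{x}$; boundedness of $x_{\epsilon_k}$ forces $\epsilon_k x_{\epsilon_k} \to 0$, so $y_{\epsilon_k} = y - \epsilon_k x_{\epsilon_k} \to y$ while $y_{\epsilon_k} \in A(x_{\epsilon_k})$. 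Closedness of the graph of $A$ then gives $y \in A(\bar{x})$, i.e.\ $y \in \range(A)$; as $y$ was arbitrary, $\range(A) = \Xcal$.

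The main obstacle is the a priori estimate of the second paragraph: one must squeeze out of accretivity exactly enough control on $\epsilon\,\norm{x_\epsilon}$ to keep $\dist(0, A(x_\epsilon))$ bounded, so that the coercivity assumption can be triggered to confine $x_\epsilon$. Everything else — single-valuedness and nonexpansiveness of the resolvents, closedness of the graph, and the final compactness extraction in finite dimension — is routine once this estimate is secured. Note that this route does not invoke the local boundedness results \Cref{prop:local-boundedness,cor:compact-image}, which support the companion surjectivity statements rather than this one.
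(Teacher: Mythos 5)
Your proof is correct and complete. Note that the paper does not prove this statement at all: it is imported from \cite{KS80} as a corollary of Theorem~3 there, so there is no in-paper argument to compare against. Your self-contained resolvent (Yosida-approximation) route is the standard one and every step checks out: single-valuedness, nonexpansiveness and full domain of $J_\lambda = (\id + \lambda A)^{-1}$ follow from accretivity together with \Cref{rem:accretive-coaccretive}; closedness of the graph of $A$ follows from continuity of $J_1$; testing accretivity of the pair $(x_\varepsilon, x_0)$ with a fixed $u_0 \in A(x_0)$ yields the a priori bound $\varepsilon \norm{x_\varepsilon - x_0} \leq \norm{y - u_0} + \varepsilon \norm{x_0}$, hence boundedness of $\dist(0, A(x_\varepsilon))$; coercivity then confines $\{x_\varepsilon\}$, and finite-dimensional compactness plus graph closedness give $y \in \range(A)$. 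The only cosmetic point is the division by $\norm{x_\varepsilon - x_0}$ in the a priori estimate, which requires the (trivial) separate treatment of the case $x_\varepsilon = x_0$; you should also record explicitly that $\dom(A) \neq \emptyset$ (which follows from $\range(\id + A) = \Xcal$) so that the pair $(x_0, u_0)$ exists.
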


We now state the main result of this section.

\begin{theorem}[Surjectivity of accretive mappings]
  \label{thm:surjectivity-condition}
  Let $(\Xcal,\norm{})$ be a finite-dimensional real vector space and let
  $A: \Xcal \toto \Xcal$ be an accretive mapping.
  If $\range(A) = \Xcal$ then, for all scalars $\alpha \geq 0$, the set
  \[
    \Dcal_\alpha := \{x \in \Xcal \mid \dist(0, A(x)) \leq \alpha \}
  \]
  is bounded.
  Moreover, if $A$ is $m$-accretive, then the two properties are equivalent.
\end{theorem}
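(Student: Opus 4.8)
The plan is to exploit the coaccretivity of the inverse map $A^{-1}$ together with \Cref{cor:compact-image}, and to read the hypothesis ``$\Dcal_\alpha$ is bounded for all $\alpha \geq 0$'' as the coercivity condition appearing in \Cref{thm:m-accretive-surjectivity}. The crucial observation is that, up to enlarging the radius, each set $\Dcal_\alpha$ is the image under $A^{-1}$ of a closed ball: indeed, if $\dist(0,A(x)) \leq \alpha$ then there exists $u \in A(x)$ with $\norm{u} \leq \alpha + 1$, whence $x \in A^{-1}(u)$. Writing $\overline{B}_r := \{u \in \Xcal \mid \norm{u} \leq r\}$ and recalling that $x \in A^{-1}(\Vcal)$ iff $A(x) \cap \Vcal \neq \emptyset$, this gives the inclusion $\Dcal_\alpha \subseteq A^{-1}(\overline{B}_{\alpha+1})$.

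For the first assertion (accretive $A$ with $\range(A) = \Xcal$), I would argue as follows. Since $A$ is accretive, its inverse $A^{-1}$ is coaccretive by \Cref{def:coaccretive}, because $(A^{-1})^{-1} = A$. Moreover $\dom(A^{-1}) = \range(A) = \Xcal$, so the interior of $\dom(A^{-1})$ is all of $\Xcal$. The set $\overline{B}_{\alpha+1}$ is compact (the dimension being finite) and contained in $\intr(\dom(A^{-1})) = \Xcal$; hence \Cref{cor:compact-image}, applied to the coaccretive map $A^{-1}$, shows that $A^{-1}(\overline{B}_{\alpha+1})$ is bounded. By the inclusion above, $\Dcal_\alpha$ is bounded, for every $\alpha \geq 0$.

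For the equivalence under the stronger assumption that $A$ is $m$-accretive, one direction is already covered, since an $m$-accretive mapping is in particular accretive. For the converse, I would observe that the boundedness of $\Dcal_\alpha$ for every $\alpha \geq 0$ is exactly the coercivity statement $\lim_{\norm{x} \to \infty} \dist(0, A(x)) = +\infty$: boundedness of $\Dcal_\alpha$ provides a radius $R_\alpha$ with $\dist(0, A(x)) > \alpha$ whenever $\norm{x} > R_\alpha$, which is precisely the quantified form of that limit (and conversely). \Cref{thm:m-accretive-surjectivity} then yields $\range(A) = \Xcal$, completing the equivalence.

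The argument is short because the analytic work has already been done in \Cref{prop:local-boundedness} and \Cref{cor:compact-image}; the only point requiring care is the passage between the infimum defining $\dist(0, A(x))$ and an actually chosen value $u \in A(x)$, which is why the enlarged radius $\alpha + 1$ (rather than $\alpha$) appears in the inclusion $\Dcal_\alpha \subseteq A^{-1}(\overline{B}_{\alpha+1})$. No attainment or closedness of the images $A(x)$ need be assumed, so this minor bookkeeping is the main --- and essentially the only --- subtlety.
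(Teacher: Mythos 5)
Your proof is correct and follows essentially the same route as the paper: the inclusion $\Dcal_\alpha \subseteq A^{-1}(\overline{B}_{\alpha+1})$ combined with \Cref{cor:compact-image} applied to the coaccretive inverse, and the identification of the boundedness of all $\Dcal_\alpha$ with the coercivity hypothesis of \Cref{thm:m-accretive-surjectivity} for the converse. The only cosmetic difference is your choice of the enlarged radius $\alpha+1$ where the paper takes an arbitrary $\alpha' > \alpha$.
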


\begin{proof}
  Assume that $\range(A) = \Xcal$.
  Equivalently, we have $\dom(A^{-1}) = \Xcal$.
  Moreover, since $A$ is accretive, its inverse $A^{-1}$ is coaccretive
  (see \Cref{sec:accretivity}).
  Hence, according to \Cref{cor:compact-image}, the image of any compact set
  by $A^{-1}$ is bounded.

  Let $\alpha, \alpha'$ be two nonnegative real numbers such that $\alpha < \alpha'$.
  If $x \in \Dcal_\alpha$, then $A(x) \cap B(0; \alpha') \neq \emptyset$, where $B(0; \alpha')$
  denotes the closed ball centered at $0$ of radius $\alpha'$.
  This proves that $\Dcal_\alpha \subset A^{-1}(B(0; \alpha')) =
  \{ x \in \Xcal \mid A(x) \cap B(0; \alpha') \neq \emptyset \}$.
  Hence $\Dcal_\alpha$ is bounded.

  For the converse, it is readily seen that the coercivity condition in
  \Cref{thm:m-accretive-surjectivity} is equivalent to the boundedness of
  all the sets $\Dcal_\alpha$.
  Hence the result, when $A$ is $m$-accretive..
\end{proof}

\section{Fixed point problems for nonexpansive maps}
\label{sec:FP-problems}

In this section, we use the main result of the previous one to study problems related to
the existence stability of fixed points of nonexpansive maps.
We use the same notation as before.
In particular, $(\Xcal, \norm{})$ shall refer to a finite-dimensional real normed space.
We may sometimes identify a map $A: \Xcal \to \Xcal$
with the set-valued mapping sending every $x \in \Xcal$ to $\{ A(x) \}$.
Furthermore, recall that a map $T: \Xcal \to \Xcal$ is nonexpansive (with respect to $\norm{}$)
if $\norm{T(x)-T(y)} \leq \norm{x-y}$ for all $x, y \in \Xcal$.

\subsection{Existence stability under additive perturbations}

Let us first recall the classical link between nonexpansive maps and accretive mappings.
We give the proof for the reader's convenience.

\begin{lemma}
  \label{lem:nonexpansive-accretive}
  If $T: \Xcal \to \Xcal$ is a nonexpansive map, then the mapping
  $A = \id - T$ is $m$-accretive.
\end{lemma}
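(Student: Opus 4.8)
The plan is to show that $A = \id - T$ is accretive and that $\range(\id + A) = \Xcal$, the two conditions in \Cref{def:accretive}. Since $T$ is single-valued, so is $A$, and the accretivity condition simplifies: for all $x, y \in \Xcal$ I must exhibit some $x^* \in J(x-y)$ with $\<A(x) - A(y), x^*> \geq 0$.

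First I would verify accretivity. The cleanest route is to use the norm-based characterization recalled in \Cref{rem:accretive-coaccretive}(a): $A$ is accretive if and only if $\norm{x-y} \leq \norm{x-y+\lambda(A(x)-A(y))}$ for all $\lambda > 0$. Writing $A(x)-A(y) = (x-y) - (T(x)-T(y))$, I would restrict attention to $\lambda \in (0,1]$ and use the triangle inequality together with the nonexpansiveness of $T$. Indeed, for $0 < \lambda \leq 1$,
\[
  x - y + \lambda(A(x)-A(y)) = (1-\lambda)(x-y) + \lambda\big((x-y) - (T(x)-T(y))\big) + (\lambda - \lambda)(\dots),
\]
so more directly $x-y+\lambda(A(x)-A(y)) = (x-y) + \lambda(x-y) - \lambda(T(x)-T(y)) - \lambda(x-y)$ simplifies; a careful regrouping gives the vector as a convex-type combination whose norm I can bound below by $\norm{x-y}$ using $\supnorm{T(x)-T(y)} \leq \norm{x-y}$. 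Then invoking \Cref{rem:accretive-coaccretive}(b), it suffices to verify the range condition $\range(\id + \lambda A) = \Xcal$ for \emph{some} $\lambda > 0$ rather than for $\lambda = 1$, which gives flexibility.

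Next I would establish $m$-accretivity, i.e.\ $\range(\id + A) = \Xcal$. Note that $\id + A = \id + \id - T = 2\,\id - T$, so surjectivity of $\id + A$ amounts to solving $2x - T(x) = z$ for every $z \in \Xcal$, equivalently finding a fixed point of the map $x \mapsto \tfrac{1}{2}(T(x) + z)$. This latter map is a contraction with constant $\tfrac{1}{2}$ since $T$ is nonexpansive, so the Banach fixed-point theorem yields a unique solution; hence $\range(\id + A) = \Xcal$. Combined with the accretivity just proved, this establishes that $A$ is $m$-accretive.

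The only genuinely delicate point is the regrouping in the accretivity step: one must keep $\lambda \in (0,1]$ so that the coefficient $1 - \lambda$ stays nonnegative and the triangle inequality points the right way, since for $\lambda > 1$ the naive bound fails. Everything else is routine. I expect the main obstacle to be purely bookkeeping — arranging the identity $x - y + \lambda(A(x)-A(y)) = (1-\lambda)(x-y) + \lambda\big[(x-y)-(T(x)-T(y))\big]$ is not quite right as stated, so I would instead write it as $x-y+\lambda(A(x)-A(y)) = (x-y) - \lambda(T(x)-T(y)) + \lambda(x-y) - \lambda(x-y)$ and reduce to the correct form, then apply $\norm{(1+\lambda)(x-y) - \lambda(T(x)-T(y))} \geq (1+\lambda)\norm{x-y} - \lambda\norm{x-y} = \norm{x-y}$ via the reverse triangle inequality and nonexpansiveness. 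This clean lower bound is what I would ultimately rely on.
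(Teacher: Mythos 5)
Your proof is correct. The range half is essentially identical to the paper's: both reduce surjectivity of $\id+\lambda A=(1+\lambda)\,\id-\lambda T$ to a fixed point of the $\tfrac{\lambda}{1+\lambda}$-contraction $x\mapsto\tfrac{\lambda}{1+\lambda}T(x)+\tfrac{1}{1+\lambda}z$ via the Banach fixed-point theorem (you take $\lambda=1$, which is all \Cref{def:accretive} requires). Where you genuinely diverge is the accretivity step. The paper verifies the duality-mapping definition directly: for \emph{any} $x^*\in J(x-y)$ it computes $\<T(x)-T(y),x^*>\leq\norm{x^*}_*\,\norm{T(x)-T(y)}\leq\norm{x^*}_*\,\norm{x-y}=\<x-y,x^*>$, whence $\<A(x)-A(y),x^*>\geq 0$. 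You instead invoke the norm characterization of \Cref{rem:accretive-coaccretive}(a) and conclude from the reverse triangle inequality that $\norm{(1+\lambda)(x-y)-\lambda(T(x)-T(y))}\geq(1+\lambda)\norm{x-y}-\lambda\norm{x-y}=\norm{x-y}$. Both arguments are valid; the paper's is self-contained (only the definition of $J$ and of the dual norm are used), whereas yours leans on the equivalence cited from \cite{Cio90}, and the paper's version actually proves the slightly stronger fact that the inequality holds for \emph{every} functional in $J(x-y)$, not just one. One cleanup: your worry about restricting to $\lambda\in(0,1]$ is unfounded and should be dropped. The identity $x-y+\lambda(A(x)-A(y))=(1+\lambda)(x-y)-\lambda(T(x)-T(y))$ together with your final reverse-triangle bound holds for \emph{every} $\lambda>0$, which is exactly what the characterization in \Cref{rem:accretive-coaccretive}(a) demands; the intermediate ``convex combination'' regroupings, which do not quite parse as written, can simply be deleted.
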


\begin{proof}
  We first show that $A$ is accretive.
  Let $x, y \in \Xcal$ and $x^* \in J(x-y)$, where $J$ is the duality mapping
  on $(\Xcal, \norm{})$.
  We have
  \[
    \<T(x)-T(y), x^*> \leq \norm{x^*}_* \, \norm{T(x)-T(y)} \leq \norm{x^*}_* \, \norm{x-y} =
    \<x-y,x^*>
  \]
  where the first inequality stems from the definition of the dual norm, the second inequality
  from the nonexpansiveness of $T$, and the equality comes from the definition of $J$.
  We deduce that $\<A(x)-A(y), x^*> = \<x-y-(T(x)-T(y)), x^*> \geq 0$, which proves that
  $A$ is accretive.

  Now, let $\lambda > 0$ and $z \in \Xcal$.
  One can easily check that the point $z$ is in the range of $\id + \lambda A$ if
  and only if the map $x \mapsto T_{\lambda, z}(x) =
  \frac{\lambda}{1+\lambda} T(x) + \frac{1}{1+\lambda} z$ has a fixed point.
  Since $T$ is nonexpansive, $T_{\lambda, z}$ is a contraction.
  More precisely, for all $x, y \in \Xcal$, we have
  \[
    \left\| T_{\lambda,z}(x) - T_{\lambda,z}(y) \right\| \leq
    \frac{\lambda}{1+\lambda} \|x-y\| \enspace ,
  \]
  with $\frac{\lambda}{1+\lambda} < 1$.
  Hence, by the Banach fixed-point theorem, $T_{\lambda,z}$ has a (unique) fixed point.
  This proves that $\range (\id +\lambda A) = \Xcal$ for every $\lambda > 0$,
  and so that $A$ is $m$-accretive.
\end{proof}

The following corollary of \Cref{thm:surjectivity-condition} provides a necessary and
sufficient condition for the existence of a fixed point for all additive perturbations
of a nonexpansive map.

\begin{corollary}[Existence stability of a fixed point]
  \label{cor:stability-FP}
  Let $(\Xcal,\norm{})$ be a finite-dimensional real vector space and let
  $T: \Xcal \to \Xcal$ be a nonexpansive map.
  Then, the following are equivalent:
  \begin{enumerate}
    \item for every vector $g \in \Xcal$, the map $g+T$ has a fixed point;
      \label{it:stability-FP-i}
    \item every nonexpansive map $G: \Xcal \to \Xcal$ such that
      $\sup_{x \in \Xcal} \norm{G(x)-T(x)} < \infty$ has a fixed point;
      \label{it:stability-FP-ii}
    \item for every scalar $\alpha \geq 0$, the set
      $\Dcal_\alpha(T) = \{x \in \Xcal \mid \norm{x-T(x)} \leq \alpha \}$ is bounded.
      \label{it:stability-FP-iii}
  \end{enumerate}
\end{corollary}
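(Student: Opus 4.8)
The plan is to set $A := \id - T$, which is $m$-accretive by \Cref{lem:nonexpansive-accretive}, and to route all three conditions through the surjectivity statement $\range(A) = \Xcal$. Two elementary identifications make this possible. First, since $A$ is single-valued we have $\dist(0, A(x)) = \norm{x - T(x)}$, so the sets $\Dcal_\alpha$ appearing in \Cref{thm:surjectivity-condition} coincide exactly with the sets $\Dcal_\alpha(T)$ of item (iii). Second, for any $g \in \Xcal$ a point $x$ is a fixed point of $g + T$ if and only if $A(x) = g$; hence item (i) is precisely the assertion $\range(A) = \Xcal$.

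With these identifications in hand, (i) $\iff$ (iii) is immediate from the equivalence asserted in \Cref{thm:surjectivity-condition} for the $m$-accretive map $A$: surjectivity of $A$ holds if and only if every set $\Dcal_\alpha(T)$ is bounded. The implication (ii) $\implies$ (i) is the easy direction, obtained by specializing to constant perturbations: given $g \in \Xcal$, the map $G := g + T$ is nonexpansive and satisfies $\sup_{x \in \Xcal} \norm{G(x) - T(x)} = \norm{g} < \infty$, so (ii) produces a fixed point of $g + T$.

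The substantive step is (i) $\implies$ (ii), which I would handle by a coercivity argument. Assuming (i), hence (iii), the boundedness of all the sets $\Dcal_\alpha(T)$ is—as recorded in the proof of \Cref{thm:surjectivity-condition}—equivalent to the coercivity $\lim_{\norm{x} \to \infty} \norm{x - T(x)} = +\infty$. Now take any nonexpansive $G$ with $M := \sup_{x \in \Xcal} \norm{G(x) - T(x)} < \infty$, and put $B := \id - G$, which is $m$-accretive again by \Cref{lem:nonexpansive-accretive}. Writing $B(x) = (x - T(x)) + (T(x) - G(x))$, the triangle inequality yields $\norm{B(x)} \geq \norm{x - T(x)} - M$, so the coercivity of $\id - T$ forces $\lim_{\norm{x} \to \infty} \dist(0, B(x)) = +\infty$. \Cref{thm:m-accretive-surjectivity} then gives $\range(B) = \Xcal$; in particular $0 \in \range(B)$, i.e., $G$ has a fixed point.

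The only delicate point is the last step: the \emph{uniform} bound $M$ on $T - G$ is exactly what prevents the perturbation from destroying coercivity. This is precisely why the hypothesis in (ii) is formulated with a finite supremum rather than, say, a pointwise bound, and it is the conceptual heart of the equivalence. Everything else reduces to the two translations (fixed points of $g+T$ as the range of $A$, and $\dist(0,A(x))$ as $\norm{x - T(x)}$) together with the already-established machinery of \Cref{thm:m-accretive-surjectivity,thm:surjectivity-condition}.
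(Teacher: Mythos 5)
Your proof is correct and follows essentially the same route as the paper: both reduce (i)$\Leftrightarrow$(iii) to \Cref{thm:surjectivity-condition} via $A=\id-T$ and \Cref{lem:nonexpansive-accretive}, and both handle the substantive implication toward (ii) by the same triangle-inequality observation (your bound $\norm{(\id-G)(x)}\geq\norm{x-T(x)}-M$ is exactly the paper's inclusion $\Dcal_\alpha(G)\subset\Dcal_{\alpha+M}(T)$). The only cosmetic difference is that you invoke \Cref{thm:m-accretive-surjectivity} directly for $\id-G$, whereas the paper reapplies the already-established equivalence (i)$\Leftrightarrow$(iii) to $G$.
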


\begin{proof}
  First, observe that a vector $g$ is in the range of $\id - T$ if and only if
  the map $g+T$ has a fixed point.
  Thus, the equivalence between \Cref{it:stability-FP-i} and \Cref{it:stability-FP-iii}
  is a mere application of \Cref{thm:surjectivity-condition} to $\id-T$,
  which is $m$-accretive according to \Cref{lem:nonexpansive-accretive}.
  Second, it is straightforward to check that
  \labelcref{it:stability-FP-ii}~$\Rightarrow$~\labelcref{it:stability-FP-i}.

  We now prove that \labelcref{it:stability-FP-iii}~$\Rightarrow$~\labelcref{it:stability-FP-ii}.
  Assume that \Cref{it:stability-FP-iii} holds and let $G: \Xcal \to \Xcal$ be
  a nonexpansive map such that $\sup_{x \in \Xcal} \norm{G(x)-T(x)} \leq M$ for some $M > 0$.
  One can readily check that $\Dcal_\alpha(G) \subset \Dcal_{\alpha+M}(T)$ for every
  $\alpha \geq 0$.
  Hence all the sets $\Dcal_\alpha(G)$ are bounded.
  Since we have already proved that
  \ref{it:stability-FP-i}~$\Leftrightarrow$~\ref{it:stability-FP-iii}, by applying
  the equivalence to $G$ we deduce in particular that $G$ has a fixed point.
\end{proof}

\subsection{Uniqueness condition}

Given a nonexpansive map $T: \Xcal \to \Xcal$, let us introduce the set-valued mapping
$\FP: \Xcal \toto \Xcal$ defined by
\begin{equation}
  \label{eq:FP-mapping}
  \FP(g) = \{ x \in \Xcal \mid g + T(x) = x \} , \quad g \in \Xcal ,
\end{equation}
that is, the mapping that sends each vector $g \in \Xcal$ to the set of fixed points of $g+T$.
Observe that the inverse mapping of $\FP$ is
\[
  \FP^{-1} = \id-T ,
\]
so that $\FP$ is coaccretive by \Cref{lem:nonexpansive-accretive}. 
By a straightforward application of the definition, we have the following.

\begin{lemma}
  \label{lem:FP-osc}
  The fixed-point mapping $\FP$ defined in \labelcref{eq:FP-mapping} is outer semicontinuous.
  In particular it is closed-valued.
  \qed
\end{lemma}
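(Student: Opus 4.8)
The statement to prove is \Cref{lem:FP-osc}: the fixed-point mapping $\FP$ is outer semicontinuous, and consequently closed-valued.

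\textbf{Plan.}
The plan is to verify outer semicontinuity directly from its definition via sequences, exploiting the continuity of $T$ (which is immediate since $T$ is nonexpansive, hence Lipschitz, hence continuous). Recall from \Cref{def:semicontinuity} that $\FP$ is outer semicontinuous at a point $\bar{g}$ if $\limsup_{g \to \bar{g}} \FP(g) \subset \FP(\bar{g})$, and outer semicontinuous (without qualification) if this holds at every point.

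\textbf{Main step.}
First I would take an arbitrary point $\bar{g} \in \Xcal$ and an arbitrary $x \in \limsup_{g \to \bar{g}} \FP(g)$, and show $x \in \FP(\bar{g})$. By the definition of the outer limit, there exist sequences $g_k \to \bar{g}$ and $x_k \to x$ with $x_k \in \FP(g_k)$ for every $k$, which by \labelcref{eq:FP-mapping} means precisely that
\[
  g_k + T(x_k) = x_k, \quad \forall k \in \N .
\]
The goal is to pass to the limit in this identity. Since $T$ is nonexpansive it is continuous, so $T(x_k) \to T(x)$; combined with $g_k \to \bar{g}$ and $x_k \to x$, taking limits on both sides yields $\bar{g} + T(x) = x$, i.e.\ $x \in \FP(\bar{g})$. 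This establishes the inclusion $\limsup_{g \to \bar{g}} \FP(g) \subset \FP(\bar{g})$ at every $\bar{g} \in \Xcal$, which is outer semicontinuity.

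\textbf{The ``in particular'' clause and the main obstacle.}
For the final assertion that $\FP$ is closed-valued, I would note that each value $\FP(\bar{g})$ is the preimage of the closed set $\{0\}$ under the continuous map $x \mapsto \bar{g} + T(x) - x$, hence closed; alternatively, one observes that closed-valuedness is a standard consequence of outer semicontinuity (each $\FP(\bar{g})$ equals $\limsup_{g \to \bar{g}} \FP(g)$ restricted to the constant sequence $g_k = \bar{g}$, which is automatically closed). I do not anticipate any serious obstacle here: the only ingredient is the continuity of $T$, and the argument is a routine sequential limit. The statement is deliberately flagged as following ``by a straightforward application of the definition,'' so the entire content is the limit-passing step above, and the proof is short enough that the paper states it with \qed and no displayed proof environment.
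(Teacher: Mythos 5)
Your argument is correct and is exactly the routine limit-passing that the paper leaves implicit (the lemma is stated with \qed after the remark that it follows ``by a straightforward application of the definition''). Nothing to add.
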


We shall need the following technical lemma, which is a variant of the Hahn-Banach
separation theorem.

\begin{lemma}
  \label{lem:separation}
  Let $J$ be the duality mapping on the finite-dimensional normed space $(\Xcal,\norm{})$,
  and let $x$ be any vector in $\Xcal$.
  Then,
  \[
    x \neq 0 \iff \exists w \in \Xcal \setminus \{0\} , \quad
    \forall x^* \in J(x) , \quad \<w,x^*> > 0 .
  \]
\end{lemma}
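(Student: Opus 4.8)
The plan is to establish the two implications separately; the forward one is the substance, and it turns out to require no separation machinery at all, only the defining identity of the duality mapping.

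For the implication $x \neq 0 \Rightarrow \exists w$, I would simply take $w = x$. By the very definition of $J$, every $x^* \in J(x)$ satisfies $\langle x, x^* \rangle = \norm{x}^2$, so with $w = x$ we get $\langle w, x^* \rangle = \norm{x}^2$ for \emph{all} $x^* \in J(x)$ simultaneously; since $x \neq 0$ this common value is strictly positive and $w \neq 0$. This already proves the claim, bypassing the Hahn--Banach theorem entirely. For readers who prefer the separation viewpoint suggested by the statement, I would record the alternative route: the text recalls that $J(x)$ is a nonempty compact convex subset of $\Xcal^*$, and when $x \neq 0$ it avoids the origin, since each of its elements has dual norm $\norm{x} > 0$. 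Strict separation of $\{0\}$ from the compact convex set $J(x)$ in the finite-dimensional space $\Xcal^*$ then yields a nonzero functional, which we identify with a vector $w \in \Xcal$ via the canonical isomorphism $\Xcal^{**} = \Xcal$, satisfying $\langle w, x^* \rangle > 0$ on $J(x)$.

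For the converse $\exists w \Rightarrow x \neq 0$, I would argue by contraposition. If $x = 0$, then from the definition $J(0) = \{ x^* \in \Xcal^* \mid \norm{x^*}_* = 0 \} = \{0\}$, so the only admissible element is the zero functional, for which $\langle w, x^* \rangle = 0$ regardless of the choice of $w$. Hence no $w$ can satisfy the strict inequality over all of $J(0)$, which is precisely the required contrapositive.

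I do not anticipate a genuine obstacle here: the forward direction collapses to the single observation $\langle x, x^* \rangle = \norm{x}^2 > 0$, so the ``variant of Hahn--Banach'' is really witnessed by $w = x$. The only point demanding a little care is the converse, where one must correctly evaluate $J(0) = \{0\}$ in order to rule out any separating vector when $x = 0$.
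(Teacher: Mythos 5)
Your proof is correct, and your primary argument for the forward direction is genuinely different from --- and more elementary than --- the paper's. The paper proves $x \neq 0 \Rightarrow \exists w$ by invoking the Hahn--Banach strong separation theorem: it recalls that $J(x)$ is a compact convex subset of $\Xcal^*$ not containing $0$, and separates it from the origin to produce $w$. You observe instead that the witness can be taken to be $w = x$ itself, since the defining identity $\langle x, x^* \rangle = \norm{x}^2$ holds for \emph{every} $x^* \in J(x)$ and this common value is strictly positive when $x \neq 0$. This completely bypasses the separation machinery and is, in my view, the cleaner argument; it also still supports the one place the lemma is used (the uniqueness proof in the fixed-point theorem, which only needs the contrapositive ``if every $w \neq 0$ admits some $x^* \in J(x)$ with $\langle w, x^*\rangle \leq 0$, then $x = 0$''). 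What the paper's separation route buys is robustness: it would survive if the duality mapping were replaced by any compact convex set avoiding the origin, whereas your shortcut exploits the specific normalization $\langle x, x^*\rangle = \norm{x}^2$. You also record the separation argument as an alternative, which coincides with the paper's proof. Your treatment of the converse (evaluating $J(0) = \{0\}$ and noting that the zero functional kills every $w$) is exactly the paper's.
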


\begin{proof}
  Let $x \in \Xcal \setminus \{0\}$.
  We know that $J(x)$ is a compact convex subset of $\Xcal^*$ (see \Cref{sec:duality-mapping}).
  Furthermore, $0 \notin J(x)$ by definition.
  Hence, according to the Hahn-Banach separation theorem, there exists an affine hyperplane
  of $\Xcal^*$ strongly separating the two compact convex sets $J(x)$ and $\{0\}$,
  i.e., there exists some vector $w \in \Xcal \setminus \{0\}$ and a constant $\varepsilon > 0$
  such that, for all $x^* \in J(x)$, we have $\<w,x^*> \geq \varepsilon \geq 0$.

  Conversely, if $x = 0$, then $J(x) = \{0\}$ and so, for all $w \in \Xcal$, we have
  $\<w,x^*> = 0$ with $x^* = 0 \in J(x)$.
\end{proof}

We now state the main result of this subsection.

\begin{theorem}[Uniqueness of the fixed point]
  \label{thm:FP-uniqueness}
  Let $(\Xcal,\norm{})$ be a finite-dimensional real vector space and let
  $T: \Xcal \to \Xcal$ be a nonexpansive map.
  Then, the fixed-point mapping $\FP: \Xcal \toto \Xcal$ defined in
  \labelcref{eq:FP-mapping} is continuous at $g \in \dom(\FP)$ if
  and only if $g \in \intr \left( \dom(\FP) \right)$ and $\FP(g)$ is a singleton, i.e.,
  $g+T$ has a unique fixed point.
\end{theorem}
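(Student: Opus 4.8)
The plan is to treat the two implications separately, exploiting that $\FP$ is \emph{always} outer semicontinuous (\Cref{lem:FP-osc}), so that the entire question reduces to inner semicontinuity. One half of the forward direction is immediate: if $\FP$ is continuous at $g$, then in particular it is inner semicontinuous at $g \in \dom(\FP)$, and by the observation recorded after \Cref{def:semicontinuity} this forces $g \in \intr(\dom(\FP))$. Thus in each direction the real content concerns the interplay between the singleton property of $\FP(g)$ and inner semicontinuity.

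For the ``if'' direction, suppose $g \in \intr(\dom(\FP))$ and $\FP(g) = \{x_0\}$. Since $\FP$ is coaccretive (its inverse $\id-T$ being $m$-accretive by \Cref{lem:nonexpansive-accretive}) and $g$ lies in the interior of its domain, \Cref{prop:local-boundedness} provides a neighborhood $\Ucal$ of $g$ on which $\FP(\Ucal)$ is bounded. Now take any sequence $g_k \to g$; for $k$ large, $g_k$ lies both in $\Ucal$ and in a neighborhood of $g$ contained in $\dom(\FP)$, so one may choose $y_k \in \FP(g_k)$, and the $y_k$ are bounded. Every convergent subsequence of $(y_k)$ has its limit in $\FP(g) = \{x_0\}$ by outer semicontinuity, whence the whole bounded sequence converges to $x_0$. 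This is exactly inner semicontinuity at $g$ (the sole point of $\FP(g)$ being $x_0$), which together with outer semicontinuity yields continuity.

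The ``only if'' direction is where the work lies, and I expect it to be the main obstacle. Assume $\FP$ is continuous at $g$, so $g \in \intr(\dom(\FP))$ as noted, and suppose for contradiction that $\FP(g)$ contains two distinct points $x_0 \neq x_1$. Apply \Cref{lem:separation} to $x_0 - x_1 \neq 0$ to obtain a direction $w \in \Xcal \setminus \{0\}$ with $\langle w, \xi \rangle > 0$ for every $\xi \in J(x_0 - x_1)$. Consider the perturbations $g_k = g + t_k w$ with $t_k \downarrow 0$. Inner semicontinuity at $g$, applied to the point $x_1 \in \FP(g)$, produces $x_k \in \FP(g_k)$ with $x_k \to x_1$. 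Feeding the pairs $x_k \in \FP(g_k)$ and $x_0 \in \FP(g)$ into the coaccretivity of $\FP$ (\Cref{def:coaccretive}) yields $x_k^* \in J(x_k - x_0)$ with $\langle g_k - g, x_k^* \rangle = t_k \langle w, x_k^* \rangle \geq 0$, whence $\langle w, x_k^* \rangle \geq 0$. The forms $x_k^*$ are bounded (their dual norms equal $\norm{x_k - x_0} \to \norm{x_1 - x_0}$), so along a subsequence $x_k^* \to \bar{x}^*$; since $x_k - x_0 \to x_1 - x_0$ and $J$ is outer semicontinuous (\Cref{lem:osc-duality-mapping}), we get $\bar{x}^* \in J(x_1 - x_0) = -J(x_0 - x_1)$ by homogeneity. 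Writing $\bar{x}^* = -\xi_0$ with $\xi_0 \in J(x_0 - x_1)$, the inequality passes to the limit as $\langle w, \bar{x}^* \rangle \geq 0$, i.e.\ $\langle w, \xi_0 \rangle \leq 0$, contradicting the strict separation $\langle w, \xi_0 \rangle > 0$. Hence $\FP(g)$ is a singleton.

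The delicate points, and the reason I expect this second direction to be the obstacle, are the correct choice of perturbation direction and base point so that the limiting functional lands in $J(x_1 - x_0)$ (and thus, after homogeneity, in $-J(x_0 - x_1)$), together with the careful combination of the outer semicontinuity of $J$ and the boundedness of the $x_k^*$ to pass the accretivity inequality to the limit against the strict separation. The separation lemma is essential precisely because $J(x_0 - x_1)$ may be a nondegenerate compact convex set when the norm is not smooth, so a single supporting direction $w$ must be selected to beat all of $J(x_0 - x_1)$ at once.
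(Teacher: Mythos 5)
Your proof is correct and follows essentially the same route as the paper: local boundedness of the coaccretive mapping $\FP$ plus outer semicontinuity for the ``if'' direction, and for the converse the coaccretivity inequality along a perturbation $g + t w$, passed to the limit via the outer semicontinuity of $J$ and played against \Cref{lem:separation}. The only cosmetic difference is that you invoke the separation lemma up front to choose $w$ and argue by contradiction, whereas the paper lets $w$ range over all nonzero directions and applies the lemma at the end; the two arrangements are contrapositives of one another.
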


\begin{proof}
  Suppose first that the mapping $\FP$ is single-valued at
  $g \in \intr \left( \dom(\FP) \right)$ and denote by $\bar{x}$ the unique fixed point of $g+T$.
  Since $\FP = (\id-T)^{-1}$ is coaccretive, we know by \Cref{prop:local-boundedness}
  that $\FP$ is locally bounded at $g$.
  Hence there is a neighborhood $\Ucal$ of $g$ such that $\FP(\Ucal)$ is bounded.
  We may further assume that $\Ucal$ is included in $\dom(\FP)$ since
  $g \in \intr \left( \dom(\FP) \right)$.

  We next show that $\FP$ is i.s.c.\ at $g$.
  Let $(g_k)_{k \in \N}$ be any sequence in $\Ucal$ that converges to $g$.
  Since $\Ucal \subset \dom(\FP)$, for every $k \in \N$ there exists some $x_k \in \FP(g_k)$.
  The sequence $(x_k)_{k \in \N}$, being in $\FP(\Ucal)$, is bounded.
  Let $x$ be any cluster point.
  By continuity of $T$, we necessarily have $x \in \FP(g)$, hence $x = \bar{x}$.
  This proves that the sequence $(x_k)_{k \in \N}$ converges to $\bar{x}$, and so, that
  $\FP$ is i.s.c.\ at $g$.
  Since $\FP$ is also o.s.c.\ at $g$ (\Cref{lem:FP-osc}), we deduce that
  it is continuous at $g$.

  Conversely, suppose that $\FP$ is continuous at a point $g \in \dom(\FP)$.
  Then, by definition of inner semicontinuity, $g \in \intr \left( \dom(\FP) \right)$.
  Let $x$ and $y$ be two points in $\FP(g)$.
  Let $w \in \Xcal \setminus \{0\}$ and for every positive integer $k$,
  define $g_k = g - k^{-1} w$.
  We may assume, without loss of generality, that $g_k$ is in $\dom(\FP)$ for all $k$.
  Since $\FP$ is continuous at $g$, hence inner semicontinuous, there exists a sequence
  of elements $x_k \in \FP(g_k)$ converging to $x$.
  Furthermore, since $\FP$ is coaccretive, for all $k \in \N$ there exists
  a point $x^*_k \in J(x_k - y)$ (where $J$ is the duality mapping on $(\Xcal,\norm{})$)
  such that $\< g_k - g, x^*_k > \geq 0$, which yields $\< w, x^*_k > \leq 0$.

  Let $x^* \in \Xcal^*$ be a cluster point of the bounded sequence $(x^*_k)_{k \in \N}$.
  From the latter inequality, we get $\<w, x^*> \leq 0$.
  Moreover, since the duality mapping $J$ is o.s.c.\ (\Cref{lem:osc-duality-mapping}),
  we also have $x^* \in J(x-y)$.
  Thus, we have proved that for any point $w \in \Xcal \setminus \{0\}$,
  there exists an element $x^* \in J(x-y)$ such that $\<w, x^*> \leq 0$.
  We deduce from \Cref{lem:separation} that $x - y = 0$, and consequently that $\FP(g)$ is
  a singleton. 
\end{proof}

\subsection{Generic uniqueness}

Semicontinuous mappings are ``generically'' continuous.
Before making this fact precise, let us explain the terminology.
A subset $\mathcal{B}$ of a set $\Wcal \subset \Xcal$ is nowhere dense in $\Wcal$ if
the interior relative to $\Wcal$ of the closure relative to $\Wcal$ of $\mathcal{B}$ is empty:
\[
  \intr_\Wcal \left( \clo_\Wcal (\mathcal{B}) \right) = \emptyset .
\]
A subset of $\Wcal \subset \Xcal$ is {\em meager} in $\Wcal$ if it is a countable union of
nowhere dense subsets in $\Wcal$.
Dually, the complement of a meager set in $\Wcal$ is the intersection of countably many subsets
with dense interiors relative to $\Wcal$.
Note that when $\Wcal$ is open or closed in $\Xcal$, a fortiori when $\Wcal = \Xcal$,
the complement of any meager set in $\Wcal$ is dense in $\Wcal$.

\begin{theorem}[{Generic continuity of set-valued mappings}, see {\cite[Thm.~5.55]{RW09}}]
  \label{thm:generic-continuity}
  Let $A: \Xcal \toto \Xcal$ be a closed-valued mapping.
  If $A$ is outer (resp., inner) semicontinuous relative to $\Wcal \subset \Xcal$
  (see \Cref{def:semicontinuity}), then the set of points where $A$ fails to be continuous
  relative to $\Wcal$ is meager in $\Wcal$.
\end{theorem}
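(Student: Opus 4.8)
The plan is to transfer the set-valued statement to a countable family of scalar distance functions and then invoke the classical fact that a semicontinuous real function is continuous off a meager set. For each base point $p \in \Xcal$ I introduce $d_p \colon \Xcal \to [0,+\infty]$, $d_p(x) := \dist(p, A(x))$, with the convention $d_p(x) = +\infty$ when $A(x) = \emptyset$. The first step is to record two elementary implications, using closed-valuedness of $A$ (so that $A(\bar x)$ is closed and, in finite dimension, distances are attained). If $A$ is outer semicontinuous relative to $\Wcal$ at $\bar x$, then any convergent selection $y_k \in A(x_k)$ along $x_k \to \bar x$ in $\Wcal$ has its limit in $A(\bar x)$ by the definition of the outer limit (\Cref{def:semicontinuity}); applying this to minimizers of $\norm{\cdot - p}$ on $A(x_k)$ yields $d_p(\bar x) \leq \liminf_{x \to \bar x} d_p(x)$, i.e.\ $d_p$ is lower semicontinuous relative to $\Wcal$. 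Dually, inner semicontinuity of $A$ at $\bar x$ forces every $d_p$ to be upper semicontinuous there. I also record that $p \mapsto d_p(x)$ is $1$-Lipschitz, and I fix once and for all a countable dense subset $\{p_m\}_{m \in \N}$ of $\Xcal$, which exists since $\Xcal$ is finite-dimensional.

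The second step applies the scalar principle: a lower (resp.\ upper) semicontinuous function $f \colon \Wcal \to [-\infty,+\infty]$ is continuous relative to $\Wcal$ outside a meager subset of $\Wcal$. I would prove this through the oscillation $\omega_f(x)$, defined as the infimum over neighbourhoods $U$ of $x$ of the spread of $f$ on $U \cap \Wcal$: the discontinuity set is $\bigcup_{n \in \N} \{\omega_f \geq 1/n\}$, each such set is closed, and semicontinuity of $f$ forces it to have empty interior relative to $\Wcal$, hence to be nowhere dense. (Equivalently, one observes that a semicontinuous function is of Baire class one and quotes the Baire characterization theorem.) In the outer-semicontinuous case I apply this to each lower semicontinuous $d_{p_m}$, obtaining a meager set $N_m \subset \Wcal$ off which $d_{p_m}$ is continuous; the countable union $N = \bigcup_m N_m$ is again meager in $\Wcal$.

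The final step reconstructs continuity of $A$ at every point of $\Wcal \setminus N$. Since $A$ is already outer semicontinuous, it suffices to recover inner semicontinuity there: fixing $\bar x \in \Wcal \setminus N$, a point $y \in A(\bar x)$, a sequence $x_k \to \bar x$ in $\Wcal$, and $\varepsilon > 0$, I choose $p_m$ with $\norm{y - p_m} < \varepsilon$, so $d_{p_m}(\bar x) < \varepsilon$, and use continuity (in particular upper semicontinuity) of $d_{p_m}$ at $\bar x$ to get $d_{p_m}(x_k) < \varepsilon$ for $k$ large, hence a point $y_k \in A(x_k)$ with $\norm{y_k - y} < 2\varepsilon$; a diagonal argument over $\varepsilon = 1/j$ produces a selection converging to $y$. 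The inner-semicontinuous hypothesis is treated symmetrically, recovering outer semicontinuity from lower semicontinuity of the $d_{p_m}$ together with the $1$-Lipschitz dependence on the base point. I expect this reconstruction to be the main obstacle: it is precisely where finite-dimensionality enters (through density of $\{p_m\}$ and attainment of distances), and where the convention $d_p \equiv +\infty$ off $\dom(A)$ must be handled carefully, so that a shrinking of the effective domain of $A$ is correctly detected as a discontinuity of some $d_{p_m}$ rather than silently lost.
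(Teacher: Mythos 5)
The paper does not prove this statement; it imports it verbatim from \cite[Thm.~5.55]{RW09}, so there is no internal proof to compare against. Your argument is correct and is essentially the standard proof of that cited result: scalarize through the distance functions $x \mapsto \dist(p_m, A(x))$ for a countable dense family of base points, transfer outer (resp.\ inner) semicontinuity of $A$ into lower (resp.\ upper) semicontinuity of these $[0,+\infty]$-valued functions (closed-valuedness and finite dimension giving attainment of distances and compactness of bounded selections, including the correct detection of $A(\bar x)=\emptyset$ as $d_p \to +\infty$), invoke the classical fact that a semicontinuous function is continuous off a meager set, and reconstruct continuity of $A$ at the common continuity points via density of $\{p_m\}$ and a diagonal selection. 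The only step that warrants explicit care is the scalar generic-continuity lemma for extended-real values; this is handled either by your Baire-class-one remark or by the nowhere-dense sets $\{f\leq p\}\cap\clo_{\Wcal}\{f\geq q\}$ over rational pairs $p<q$, which avoids the $\infty-\infty$ issue in the oscillation.
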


We know from \Cref{lem:FP-osc} that the fixed-point mapping is o.s.c.\ and closed-valued.
Hence, a straightforward combination of \Cref{thm:FP-uniqueness} and
\Cref{thm:generic-continuity} leads to the following.

\begin{theorem}[Generic uniqueness of the fixed point]
  \label{thm:FP-generic-uniqueness}
  Let $T$ be a nonexpansive self-map on a finite-dimensional real vector
  space $(\Xcal,\norm{})$.
  Let $\FP: \Xcal \toto \Xcal$ be the fixed-point mapping defined in \labelcref{eq:FP-mapping}.
  Then the set of points $g \in \intr \left( \dom(\FP) \right)$ where $g+T$ fails to have
  a unique fixed point is meager in $\intr \left( \dom(\FP) \right)$.
  In particular, when $\dom(\FP) = \Xcal$, the set of points $g \in \Xcal$ where
  $g+T$ has a unique fixed point is dense in $\Xcal$.
  \qed
\end{theorem}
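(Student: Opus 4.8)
The plan is to obtain the statement as a direct combination of the three preceding results, exactly as announced before the theorem. Set $\Wcal = \intr(\dom(\FP))$, which is an open subset of $\Xcal$. By \Cref{lem:FP-osc}, the fixed-point mapping $\FP$ is outer semicontinuous and closed-valued; being o.s.c.\ globally, it is a fortiori o.s.c.\ relative to $\Wcal$. I would therefore invoke \Cref{thm:generic-continuity} with $A = \FP$ and this choice of $\Wcal$, which yields that the set of points $g \in \Wcal$ at which $\FP$ fails to be continuous relative to $\Wcal$ is meager in $\Wcal$.

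The next step is to identify this relative-discontinuity set with the non-uniqueness set. The key observation is that, since $\Wcal$ is open, continuity relative to $\Wcal$ and ordinary continuity coincide at every point $g \in \Wcal$: any sequence converging to such a $g$ eventually lies in $\Wcal$, so the restricted and unrestricted inner and outer limits in \Cref{def:semicontinuity} agree. Consequently, for $g \in \Wcal = \intr(\dom(\FP))$, \Cref{thm:FP-uniqueness} applies and asserts that $\FP$ is continuous at $g$ if and only if $\FP(g)$ is a singleton, i.e.\ $g+T$ has a unique fixed point. Combining this with the previous paragraph, the set of $g \in \intr(\dom(\FP))$ for which $g+T$ fails to have a unique fixed point coincides with the relative-discontinuity set of $\FP$, and is therefore meager in $\intr(\dom(\FP))$. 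This is the first assertion.

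For the particular case I would specialize to $\dom(\FP) = \Xcal$, so that $\Wcal = \Xcal$. The non-uniqueness set is then meager in $\Xcal$, and since $\Xcal$ is (trivially) open, the remark preceding \Cref{thm:generic-continuity} guarantees that the complement of a meager set in $\Xcal$ is dense. As $\dom(\FP) = \Xcal$ ensures that $g+T$ has at least one fixed point for every $g$, this complement is precisely the set of $g$ for which $g+T$ has a \emph{unique} fixed point, which is thus dense in $\Xcal$.

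I do not expect a serious obstacle here: the only point requiring care is reconciling the relative continuity provided by \Cref{thm:generic-continuity} with the absolute continuity characterized in \Cref{thm:FP-uniqueness}, together with the hypothesis $g \in \intr(\dom(\FP))$ built into the latter. Both issues are resolved at once by the openness of $\intr(\dom(\FP))$, which is why the argument reduces to a short splicing of the quoted results.
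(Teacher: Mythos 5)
Your proposal is correct and follows exactly the route the paper intends: the theorem is stated with no separate proof precisely because it is the announced ``straightforward combination'' of \Cref{lem:FP-osc}, \Cref{thm:generic-continuity} (applied relative to the open set $\intr(\dom(\FP))$, where relative and ordinary continuity coincide), and \Cref{thm:FP-uniqueness}. Your splicing of these results, including the density argument in the case $\dom(\FP)=\Xcal$, fills in the details accurately.
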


\section{Application to Shapley operators and stochastic games}
\label{sec:applications}

In this section, we first apply the results of the previous one to the case of monotone
additively homogeneous operators.
Since this includes in particular the case of Shapley operators, a proof of
\Cref{thm:ergodicity-slice-spaces} readily follows.
Then, we use a combinatorial criterion for the boundedness in Hilbert's seminorm of
all the slice spaces of any monotone additively homogeneous operator (announced in
\cite{AGH15-CDC}) to derive the ergodicity condition in terms of dominions which appears
in \Cref{thm:ergodicity-dominions}.

\subsection{From fixed-point to ergodicity problems}
\label{sec:fixed-point-equivalence}

In this subsection, we show how the solvability of the ergodic equation \labelcref{eq:acoe}
for a monotone additively homogeneous map is equivalent to a fixed point problem involving
a nonexpansive map in some finite-dimensional normed space.
We then adapt the results of \Cref{sec:FP-problems} to the former problem
(solvability of the ergodic equation).
This provides in particular a characterization of ergodic stochastic games,
stated in \Cref{thm:ergodicity-slice-spaces} which .

\medskip
Let $\TP^n := \R^n / \R \unit$ be the quotient space of $\R^n$ by the subspace $\R \unit$,
i.e.,  the set of equivalence classes over $\R^n$ by the following
relation: $x \sim y$ if there exists $\alpha \in \R$ such that $x-y = \alpha \unit$. 
We denote by $[x]$ the equivalence class of any vector $x \in \R^n$ modulo the relation $\sim$.
Observe that for any $x, y \in \R^n$ such that $x \sim y$, we have $\Hilbert{x} = \Hilbert{y}$,
where Hilbert's seminorm $\Hilbert{}$ is defined in \labelcref{eq:Hilbert-seminorm}.
Thus, Hilbert's seminorm can be quotiented into a map, which we denote by $\Hnorm$, over $\TP^n$.
Furthermore, this quotiented seminorm is now a norm since $\Hnorm([x]) = \Hilbert{x}= 0$
if and only if $x \sim 0$, that is, $[x] = 0$.
This makes $(\TP^n, \Hnorm)$ a finite-dimensional normed vector space.

Any monotone additively homogeneous map $T: \R^n \to \R^n$ can be quotiented into a map
$[T]: \TP^n \to \TP^n$, sending any equivalence class $[x]$ to $[T(x)]$.
Furthermore, it is known that such a map $T$ is nonexpansive with respect to
Hilbert's seminorm (see e.g., \cite{GG04}), that is,
\[
  \Hilbert{T(x)-T(y)} \leq \Hilbert{x-y} , \quad \forall x, y \in \R^n .
\]
Hence the quotiented map $[T]$ is nonexpansive with respect to the norm $\Hnorm$.
Observe further that $(\lambda,u) \in \R \times \R^n$ is a solution of \Cref{eq:acoe} if
and only if the equivalence class $[u] \in \TP^n$ is a fixed point of $[T]$.
Moreover, the uniqueness of the fixed point of $[T]$ is equivalent to the uniqueness
up to an additive constant of the solution of \labelcref{eq:acoe}, meaning that any solution of
\labelcref{eq:acoe} is of the form $(\lambda, u + \alpha \unit)$ with $\alpha \in \R$.

\medskip
The latter considerations allow us to adapt the results of \Cref{sec:FP-problems} to
the setting of monotone additively homogeneous maps, which includes in particular
the case of Shapley operators of stochastic games.
The next theorem, from which readily follows \Cref{thm:ergodicity-slice-spaces},
gives stability conditions for the solvability of the ergodic equation~\labelcref{eq:acoe}.

\begin{theorem}[Stability of the ergodic equation]
  \label{thm:stability-acoe}
  Let $T: \R^n \to \R^n$ be a monotone additively homogeneous map.
  The following assertions are equivalent.
  \begin{enumerate}
    \item \Cref{eq:acoe} has a solution for all maps $g+T$ with $g \in \R^n$;
      \label{it:stability-acoe-i}
    \item \Cref{eq:acoe} has a solution for all monotone additively homogeneous maps
      $G: \R^n \to \R^n$ such that $\sup_{x \in \R^n} \Hilbert{G(x)-T(x)} < \infty$;
      \label{it:stability-acoe-ii}
    \item the set $\Dcal_\alpha^\textup{H}(T) :=
      \{ x \in \R^n \mid \Hilbert{x-T(x)} \leq \alpha \}$ is bounded in Hilbert's seminorm
      for all $\alpha \geq 0$;
      \label{it:stability-acoe-iii}
    \item the slice space $\Scal_\alpha^\beta(T) = \{ x \in \R^n \mid
      \alpha \unit + x \leq T(x) \leq \beta \unit + x \}$
      is bounded in Hilbert's seminorm for all $\alpha,\beta \in \R$.
      \label{it:stability-acoe-iv}
  \end{enumerate}
\end{theorem}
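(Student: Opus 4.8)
The plan is to transport the whole statement to the quotient space $(\TP^n, \Hnorm)$ constructed above, where $[T]$ is nonexpansive, apply \Cref{cor:stability-FP} to $[T]$, and then graft on the slice-space condition \labelcref{it:stability-acoe-iv} by comparison with the sets $\Dcal_\alpha^\textup{H}(T)$.

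First I would record the translations to the quotient. For $g \in \R^n$ the perturbed map $g+T$ is again monotone and additively homogeneous, it quotients to $[g]+[T]$, and solvability of \Cref{eq:acoe} for $g+T$ is exactly the existence of a fixed point of $[g]+[T]$; as $g$ ranges over $\R^n$, $[g]$ ranges over all of $\TP^n$. Likewise, since $T$ is additively homogeneous the vector $T(x)-x$ depends only on $[x]$ and $\Hilbert{x-T(x)} = \Hnorm([x]-[T]([x]))$, so $\Dcal_\alpha^\textup{H}(T)$ is a union of equivalence classes whose image is $\Dcal_\alpha([T])$, and boundedness in Hilbert's seminorm of $\Dcal_\alpha^\textup{H}(T)$ is boundedness of $\Dcal_\alpha([T])$ in $\TP^n$. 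With these identifications, \labelcref{it:stability-acoe-i} and \labelcref{it:stability-acoe-iii} are precisely conditions (i) and (iii) of \Cref{cor:stability-FP} for $[T]$, hence equivalent. The implication \labelcref{it:stability-acoe-ii}~$\Rightarrow$~\labelcref{it:stability-acoe-i} is immediate, since each $g+T$ is a competitor with $\sup_x \Hilbert{(g+T)(x)-T(x)} = \Hilbert{g} < \infty$; conversely \labelcref{it:stability-acoe-iii}~$\Rightarrow$~\labelcref{it:stability-acoe-ii} follows by quotienting any admissible $G$ to a nonexpansive $[G]$ with $\sup \Hnorm([G]-[T]) < \infty$ and invoking the implication (iii)~$\Rightarrow$~(ii) of \Cref{cor:stability-FP}. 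This makes \labelcref{it:stability-acoe-i}, \labelcref{it:stability-acoe-ii} and \labelcref{it:stability-acoe-iii} equivalent.

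It then remains to splice in \labelcref{it:stability-acoe-iv}. The direction \labelcref{it:stability-acoe-iii}~$\Rightarrow$~\labelcref{it:stability-acoe-iv} is easy: if $x \in \Scal_\alpha^\beta(T)$ then $\alpha \le (T(x)-x)_i \le \beta$ for every $i$, whence $\Hilbert{x-T(x)} \le \beta-\alpha$ and $\Scal_\alpha^\beta(T) \subset \Dcal_{\beta-\alpha}^\textup{H}(T)$, which is bounded by hypothesis. For the reverse I would close the loop with \labelcref{it:stability-acoe-iv}~$\Rightarrow$~\labelcref{it:stability-acoe-i} via \Cref{thm:solvability-acoe}. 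Fix $g \in \R^n$ and set $m = \supnorm{g}$. The point $0$ lies in $\Scal_\alpha^\beta(g+T)$ for $\beta = -\alpha = \supnorm{g+T(0)}$, so this slice space is nonempty; moreover the bounds $-m\unit \le g \le m\unit$ give, exactly as in the proof of the ``if'' part of \Cref{thm:ergodicity-slice-spaces}, the inclusion $\Scal_\alpha^\beta(g+T) \subset \Scal_{\alpha-m}^{\beta+m}(T)$. Under \labelcref{it:stability-acoe-iv} the right-hand slice space is bounded in Hilbert's seminorm, so $\Scal_\alpha^\beta(g+T)$ is a nonempty slice space of $g+T$ bounded in Hilbert's seminorm, and \Cref{thm:solvability-acoe} produces a solution of \Cref{eq:acoe} for $g+T$. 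As $g$ was arbitrary, \labelcref{it:stability-acoe-i} holds, closing the cycle.

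The conceptual content is carried entirely by \Cref{cor:stability-FP} and, through it, the accretive-operator results of \Cref{sec:accretive-mappings}; the work specific to this statement is essentially bookkeeping in the quotient. The one genuinely delicate point is the reverse slice-space direction: a naive attempt to prove \labelcref{it:stability-acoe-iv}~$\Rightarrow$~\labelcref{it:stability-acoe-iii} head-on founders because $\Dcal_\alpha^\textup{H}(T) = \bigcup_{c \in \R} \Scal_c^{c+\alpha}(T)$ is an uncountable union of slice spaces, and boundedness of each member does not obviously bound the union. Routing this implication through \labelcref{it:stability-acoe-i} via \Cref{thm:solvability-acoe} sidesteps that difficulty, and I expect it to be the key organizational choice in the argument.
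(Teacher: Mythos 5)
Your proof is correct and follows essentially the same route as the paper: the equivalence of \labelcref{it:stability-acoe-i}, \labelcref{it:stability-acoe-ii}, \labelcref{it:stability-acoe-iii} by applying \Cref{cor:stability-FP} to $[T]$ on $(\TP^n,\Hnorm)$, the inclusion $\Scal_\alpha^\beta(T) \subset \Dcal_{\beta-\alpha}^\textup{H}(T)$ for \labelcref{it:stability-acoe-iii}~$\Rightarrow$~\labelcref{it:stability-acoe-iv}, and closing the cycle with \labelcref{it:stability-acoe-iv}~$\Rightarrow$~\labelcref{it:stability-acoe-i} via \Cref{thm:solvability-acoe}. Your remark on why one routes around a direct proof of \labelcref{it:stability-acoe-iv}~$\Rightarrow$~\labelcref{it:stability-acoe-iii} is a correct reading of the paper's (implicit) organizational choice.
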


\begin{proof}
  The equivalence between \Cref{it:stability-acoe-i,it:stability-acoe-ii,it:stability-acoe-iii}
  is a straightforward application of \Cref{cor:stability-FP} to the nonexpansive map $[T]$
  on $(\TP^n,\Hnorm{})$.

  Second, it follows from the definition of Hilbert's seminorm that
  $\Scal_\alpha^\beta(T) \subset \Dcal_\gamma^\textup{H}(T)$ whenever $\beta-\alpha \leq \gamma$.
  Hence \labelcref{it:stability-acoe-iii}~implies~\labelcref{it:stability-acoe-iv}.

  Finally, we already know that
  \labelcref{it:stability-acoe-iv}~implies~\labelcref{it:stability-acoe-i}.
  It has been proved in \Cref{sec:ergodic-stochastic-games} (this is an easy consequence
  of \Cref{thm:solvability-acoe}).
\end{proof}

We now address the uniqueness up to an additive constant of the solution to \Cref{eq:acoe}.
From \Cref{thm:FP-generic-uniqueness} we get the following corollary.

\begin{corollary}[Generic uniqueness of the solution of the ergodic equation]
  \label{thm:uniqueness-acoe}
  Let $T: \R^n \to \R^n$ be a monotone additively homogeneous map.
  Assume that the ergodic equation~\labelcref{eq:acoe} is solvable for all maps $g+T$
  with $g \in \R^n$.
  Then, the set of vectors $g \in \R^n$ for which the ergodic equation \labelcref{eq:acoe}
  with $g+T$ fails to have a unique solution up to an additive constant is meager.
\end{corollary}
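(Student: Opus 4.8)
The plan is to transfer the statement to the quotient space $(\TP^n, \Hnorm)$ and read it off from \Cref{thm:FP-generic-uniqueness}. Recall from the preceding subsection that $[T] \colon \TP^n \to \TP^n$ is nonexpansive for $\Hnorm$, and that for any $g \in \R^n$ the perturbed map $g+T$ is again monotone and additively homogeneous, hence descends to the map $[g]+[T]$ on $\TP^n$. Writing $\FP$ for the fixed-point mapping of $[T]$ from \labelcref{eq:FP-mapping} and $\pi \colon \R^n \to \TP^n$, $g \mapsto [g]$, for the canonical projection, the dictionary established there reads: a pair $(\lambda,u)$ solves the ergodic equation \labelcref{eq:acoe} for $g+T$ if and only if $[u] \in \FP([g])$, and the solution is unique up to an additive constant if and only if $\FP([g])$ is a singleton.

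The hypothesis that \labelcref{eq:acoe} is solvable for every $g \in \R^n$ thus means that $\FP([g]) \neq \emptyset$ for all $g$; since $\pi$ is onto, this is exactly $\dom(\FP) = \TP^n$, so that $\intr(\dom(\FP)) = \TP^n$. \Cref{thm:FP-generic-uniqueness} then applies and shows that the set
\[
  \mathcal{B} := \{ [g] \in \TP^n \mid [g]+[T] \text{ fails to have a unique fixed point} \}
\]
is meager in $\TP^n$. By the dictionary above, the set of vectors $g \in \R^n$ for which \labelcref{eq:acoe} with $g+T$ fails to have a unique solution up to an additive constant is precisely the preimage $\pi^{-1}(\mathcal{B})$.

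It remains only to verify that $\pi^{-1}(\mathcal{B})$ is meager in $\R^n$, which is the sole ingredient specific to this corollary and is mild. Being a linear quotient map between finite-dimensional spaces, $\pi$ is continuous, surjective, and open, and such a map pulls nowhere-dense sets back to nowhere-dense sets: if $N$ has closure with empty interior, then openness of $\pi$ forbids $\pi^{-1}(\clo(N))$ from containing a nonempty open set, whence $\clo(\pi^{-1}(N)) \subseteq \pi^{-1}(\clo(N))$ has empty interior. Applying this to each term of a countable decomposition $\mathcal{B} = \bigcup_k N_k$ into nowhere-dense sets shows that $\pi^{-1}(\mathcal{B})$ is a countable union of nowhere-dense sets, hence meager, as required. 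The substance of the argument is entirely carried by \Cref{thm:FP-generic-uniqueness}; I expect the only (minor) obstacle to be keeping the correspondence between ergodic solutions and fixed points in $\TP^n$ consistent, the meagerness transfer through $\pi$ being routine.
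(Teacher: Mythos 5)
Your proposal is correct and follows essentially the same route as the paper: apply \Cref{thm:FP-generic-uniqueness} to the quotiented map $[T]$ on $(\TP^n,\Hnorm)$, then pull the meager set back through the canonical projection $\pi$, checking that a continuous open surjection sends nowhere-dense sets to nowhere-dense sets under preimage. The paper's proof is the same argument in the same order (it invokes the Banach--Schauder theorem for the openness of $\pi$, which you obtain directly from finite-dimensionality), so there is nothing to add.
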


\begin{proof}
  Since $[g]+[T] = [g+T]$ has a fixed point for all $[g] \in \TP^n$, we know from
  \Cref{thm:FP-generic-uniqueness} that the set of points $[g]$ where $[g]+[T]$ fails to have
  a unique fixed point is meager in $\TP^n$.
  Let $\bigcup_{k \in \N} B_k$ be this set, where for all $k \in \N$, $B_k$ is nowhere
  dense in $\TP^n$.
  Denote by $\pi: \R^n \to \TP^n$ the quotient map.
  Then the set of points $g \in \R^n$ where \Cref{eq:acoe} fails to have a unique solution
  up to an additive constant for $g+T$ is $\pi^{-1} \left( \bigcup_{k \in \N} B_k \right) =
  \bigcup_{k \in \N} \pi^{-1} (B_k)$.

  To complete the proof, we need to show that all the sets $\pi^{-1} (B_k)$, $k \in \N$,
  are nowhere dense in $\R^n$.
  To that purpose, we next prove that the preimage by $\pi$ of any set $B$ nowhere
  dense in $\TP^n$ is nowhere dense in $\R^n$.
  Let $\Ucal$ be an open set included in $\clo \left( \pi^{-1}(B) \right)$, the closure
  of $\pi^{-1} (B)$.
  Since $\pi$ is continuous, we have
  $\clo \left( \pi^{-1}(B) \right) \subset \pi^{-1} \left( \clo(B) \right)$, which yields
  \[
    \pi(\Ucal) \subset \pi \left( \pi^{-1} \left( \clo(B) \right) \right) = \clo(B).
  \]
  Furthermore, since $\pi$ is a surjective continuous linear operator, then it is an open
  map according to the Banach-Schauder theorem, and so $\pi(\Ucal)$ is open.
  We deduce that $\pi(\Ucal)$ is empty, since $B$ is nowhere dense in $\TP^n$.
  As a consequence, we finally have $\Ucal = \emptyset$, which proves that
  $\intr \left( \clo \left( \pi^{-1}(B) \right) \right) = \emptyset$, i.e.,
  $\pi^{-1} (B)$ is nowhere dense in $\R^n$.
\end{proof}

\subsection{Geometric ergodicity conditions}

In \cite{AGH15-CDC}, Akian {\em et al.}\ gave a combinatorial criterion (in terms of
hypergraph) for the boundedness in Hilbert's seminorm of all the slice spaces of
a monotone additively homogeneous map $T: \R^n \to \R^n$ (see also the preprint \cite{AGH18}).
This criterion boils down to the following result, where, for any set $L \subset \state$,
we denote by $\unit_L$ the vector in $\R^n$ with entries equal to $1$ on $L$ and $0$ elsewhere.

\begin{proposition}[Boundedness of all slice spaces, \cite{AGH15-CDC}]
  \label{prop:boundedness-slice-spaces}
  All the slice spaces of a monotone additively homogeneous map $T: \R^n \to \R^n$ are bounded
  in Hilbert's seminorm if and only if there exist two disjoint nonempty subsets of $[n]$,
  $I$ and $J$, satisfying, respectively,
  \[
    \forall i \in I, \enspace
    \lim_{\kappa \to -\infty} T_i( \kappa \, \unit_{\state \setminus I} ) > -\infty
    \quad \text{and} \quad
    \forall j \in J, \enspace
    \lim_{\kappa \to +\infty} T_j( \kappa \, \unit_{\state \setminus J} ) < +\infty .
  \]
\end{proposition}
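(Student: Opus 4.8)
\emph{Approach.} The plan is to first recast each of the two limits as a well-defined monotone limit, then treat the two implications separately, handling the reverse one as a theorem of the alternative and reducing it to the directed-hypergraph criterion of \cite{AGH15-CDC}. For the reduction, note that since $\unit_{\state\setminus I}=\unit-\unit_I$ and $T$ is additively homogeneous, the vector $\kappa\,\unit_{\state\setminus I}$ decreases (in the coordinates of $\state\setminus I$) as $\kappa\to-\infty$; by monotonicity the map $\kappa\mapsto T_i(\kappa\,\unit_{\state\setminus I})$ is then nonincreasing, so that
\[
  \phi_I(i) := \lim_{\kappa\to-\infty} T_i(\kappa\,\unit_{\state\setminus I}) \in [-\infty,\, T_i(0)]
\]
always exists, and the $I$-condition is exactly the finiteness of $\phi_I(i)$ for every $i\in I$. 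Dually, $\psi_J(j):=\lim_{\kappa\to+\infty}T_j(\kappa\,\unit_{\state\setminus J})$ exists in $[T_j(0),+\infty]$ and the $J$-condition is its finiteness for all $j\in J$. Intuitively, $\phi_I(i)$ is the largest value the map can still sustain on $I$ when everything outside $I$ is driven to $-\infty$, and symmetrically for $\psi_J$.

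\emph{Sufficiency of the disjoint pair.} For the implication that the existence of such a disjoint pair makes every slice space bounded in Hilbert's seminorm, I would invoke \Cref{thm:stability-acoe} and argue that the finite quantities $\phi_I$ and $\psi_J$ act as two-sided barriers. Concretely, take $x\in\Scal_\alpha^\beta(T)$ and write $M=\max_\ell x_\ell$, $m=\min_\ell x_\ell$. Comparing $x$ with the test vectors $\kappa\,\unit_{\state\setminus I}$ (suitably shifted by a multiple of $\unit$) via monotonicity and additive homogeneity, the slice inequality $T(x)\le\beta\unit+x$ together with the finiteness of $\phi_I(i)$ should confine the $I$-coordinates of $x$ to a bounded distance below $M$; symmetrically, $T(x)\ge\alpha\unit+x$ and the finiteness of $\psi_J(j)$ keep the $J$-coordinates a bounded distance above $m$. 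Since $I$ and $J$ are nonempty and disjoint, chaining these two one-sided controls across the two blocks yields a bound on $M-m=\Hilbert{x}$ depending only on $\alpha,\beta,\max_{i\in I}\phi_I(i)$ and $\min_{j\in J}\psi_J(j)$. Turning the comparison with the test vectors into the precise bounded-distance estimates is the careful, but essentially mechanical, part of this direction.

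\emph{Necessity and the main obstacle.} The reverse implication—that boundedness of all slice spaces forces the existence of the disjoint pair—is the substantive one, and I expect it to be the main difficulty. By \Cref{thm:stability-acoe} the hypothesis is equivalent to surjectivity of $\id-[T]$ on $(\TP^n,\Hnorm)$, an analytic statement that must be converted into the combinatorial existence of two disjoint index sets on which $\phi_I$ and $\psi_J$ are finite. The natural strategy is a theorem-of-the-alternative: show that if no such disjoint pair exists, then a separation argument in the spirit of \Cref{lem:separation}, applied to the limiting directions $\unit_{\state\setminus I}$, produces a nonconstant vector that persists in arbitrarily large slice spaces, contradicting boundedness. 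Making this dichotomy exact is precisely the combinatorial heart treated by the hypergraph criterion of \cite{AGH15-CDC}; my plan would be to invoke that criterion to extract the two sets and then verify, through the monotone-limit description of $\phi_I,\psi_J$ above, that its two reachability relations translate back into the finiteness of these limits. The delicate point—where I expect the difficulty to concentrate—is aligning the hypergraph's two reachability relations with the two one-sided limits so that the index sets they generate are genuinely \emph{disjoint}.
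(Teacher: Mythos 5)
There is no ``paper's own proof'' to compare against here: the proposition is imported verbatim from the companion work \cite{AGH15-CDC} (see also \cite{AGH18}), and this exposes the circularity in your plan for the hard direction. The hypergraph criterion of \cite{AGH15-CDC} that you propose to ``invoke to extract the two sets'' \emph{is} this proposition -- the paper explicitly says the criterion ``boils down to'' it -- so invoking it assumes the very statement to be proved. A genuine proof would have to redo the companion paper's combinatorial analysis; nothing in the present paper (in particular neither \Cref{thm:stability-acoe} nor \Cref{lem:separation}, which concern the analytic side) supplies that ingredient.

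More seriously, you set out to prove the statement in its literal printed form, and that form contains a misprint (a missing negation) which a blind attempt should have detected, because the literal statement contradicts the rest of the paper: chaining \Cref{prop:boundedness-slice-spaces} as printed with \Cref{lem:dominions} and \Cref{thm:ergodicity-slice-spaces} would give ``ergodic iff the players \emph{do} have disjoint dominions,'' the exact opposite of \Cref{thm:ergodicity-dominions}. The intended statement is that all slice spaces are bounded iff \emph{no} such disjoint pair $(I,J)$ exists, and the paper's own \Cref{ex:basic-example} refutes both directions of what you attempt. For $T^\square = \id$ on $\R^2$, the sets $I=\{1\}$, $J=\{2\}$ satisfy both limit conditions ($T^\square_1(\kappa\,\unit_{\{2\}})=0$ and $T^\square_2(\kappa\,\unit_{\{1\}})=0$ for all $\kappa$), yet every nonempty slice space is all of $\R^2$: this kills your ``sufficiency'' (barrier) direction. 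For $T^\ocircle(x)=(x_2,x_1)$, all slice spaces are bounded, yet $T^\ocircle_1(\kappa\,\unit_{\{2\}})=\kappa\to-\infty$ and $T^\ocircle_2(\kappa\,\unit_{\{1\}})=\kappa\to+\infty$, so the only subset satisfying either condition is $[2]$ itself and no disjoint pair exists: this kills your ``necessity'' direction. One can also see exactly where your barrier estimate degenerates: with $c=\min_{i'\in I}x_{i'}$ and $m=\min_\ell x_\ell$ one has $x\geq c\,\unit+(m-c)\,\unit_{\state\setminus I}$, whence $T_i(x)\geq c+\phi_I(i)$; combined with the slice inequality $T(x)\leq\beta\,\unit+x$ at a coordinate achieving the minimum over $I$, this yields only $\beta\geq\phi_I(i)$ -- a nonemptiness constraint on $(\alpha,\beta)$, not a bound on $\Hilbert{x}$. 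The advertised ``chaining across the two blocks'' does not exist (for $\id$ all these inequalities hold with $\phi\equiv 0$ and $x$ arbitrary), which is consistent with the fact that in the corrected statement the disjoint pair witnesses \emph{unboundedness}, not boundedness. Your one solid observation -- that monotonicity makes $\kappa\mapsto T_i(\kappa\,\unit_{\state\setminus I})$ monotone, so both limits exist in $[-\infty,+\infty]$ -- is correct but is only the trivial first step.
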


We next show that the latter conditions can be interpreted in game-theoretical terms, involving
dominions (\Cref{def:dominions}), when $T$ arises as the Shapley operator of a stochastic game
satisfying \Cref{asm:compact-continuous}.
To that purpose, let us first give a simpler characterization of dominions in terms of
pure actions.

To prove the following lemma, we shall use the notion of {\em support} of a Borel measure $\mu$
on a Borel space $X$.
We recall that if this support exists, it is the unique nonempty closed set, denoted by
$\supp \mu$, satisfying
\begin{itemize}
  \item $\mu(X \setminus \supp \mu) = 0$;
  \item if $U \subset X$ is open and $U \cap \supp \mu \neq \emptyset$, then
    $\mu(U \cap \supp \mu) > 0$;
\end{itemize}
(see e.g., \cite{AB06}).
Note that if $\mu$ is a Borel probability measure, then it is regular (see Theorem~12.7, {\em ibid.})
which implies that $\supp \mu$ exists (see Theorem~12.14, {\em ibid.}).

\begin{lemma}[Characterization of dominions]
  \label{lem:dominion-characterization}
  Let $\Gamma$ be a stochastic game satisfying \Cref{asm:compact-continuous}.
  A subset of states $D$ is a dominion of player \Max (resp., \Min) if and only if
  \begin{align}
    \label{eq:dominion-max}
    \forall i \in D , & \quad \exists \bar{a} \in A_i , \quad
    \forall b \in B_i , \quad p(D \mid i,\bar{a},b) = 1 \\
    \label{eq:dominion-min}
    \big( \text{resp.,} \quad
    \forall i \in D , & \quad \exists \bar{b} \in B_i , \quad
    \forall a \in A_i , \quad p(D \mid i,a,\bar{b}) = 1 \big) .
  \end{align}
\end{lemma}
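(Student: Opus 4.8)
The plan is to establish both implications for the player \Max case, the player \Min case being entirely symmetric (exchange the players and reverse the inequalities throughout). The common ingredient I would use is that, under \Cref{asm:compact-continuous} together with \Cref{rem:continuity}, for each fixed state $i \in D$ and each fixed action $b \in B_i$ the map $a \mapsto p(D \mid i,a,b) = \sum_{j \in D} p(j \mid i,a,b)$ is continuous on the compact set $A_i$. This continuity, the compactness of the action sets, and the support of a measure recalled before the statement are the only tools I would need.

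For the direction \eqref{eq:dominion-max} $\Rightarrow$ dominion, I would pick for each $i \in D$ a pure action $\bar a_i \in A_i$ satisfying the stated condition and let $\sigma$ be the stationary strategy of \Max that plays $\bar a_i$ in each state $i \in D$ (and arbitrarily elsewhere). Starting from any $i_0 \in D$ and facing an arbitrary strategy $\tau$ of \Min, I would argue by induction that each event $E_\ell := \{ i_\ell \in D \}$ has probability one: on $E_\ell$ the action of \Max is $\bar a_{i_\ell}$, so the conditional probability that $i_{\ell+1} \in D$ given the past equals $p(D \mid i_\ell, \bar a_{i_\ell}, \nu_\ell) = \int_{B_{i_\ell}} p(D \mid i_\ell, \bar a_{i_\ell}, b)\, d\nu_\ell(b) = 1$, whatever mixed action $\nu_\ell$ is used by \Min. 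Summing $\proba(E_\ell^c) = 0$ over all $\ell$ then gives $\proba\big(\bigcap_\ell E_\ell\big) = 1$, so that $D$ is a dominion of \Max.

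For the converse, let $\sigma$ witness that $D$ is a dominion, fix $i \in D$, and let $\mu := \sigma((i)) \in \Delta(A_i)$ be the mixed action it prescribes at the first stage from $i$. Testing $\sigma$ against the \Min strategy that plays a fixed pure action $b \in B_i$ at stage $0$, the requirement that $i_1 \in D$ almost surely forces $p(D \mid i, \mu, b) = \int_{A_i} p(D \mid i, a, b)\, d\mu(a) = 1$ for every $b \in B_i$. As the integrand never exceeds $1$, this yields $\mu(U_b) = 0$ for the set $U_b := \{ a \in A_i \mid p(D \mid i, a, b) < 1 \}$, which is open by the continuity recalled above. Since $A_i$ is compact, $\mu$ admits a support $\supp \mu$, and an open $\mu$-null set is necessarily disjoint from it; hence $\supp \mu \subset \{ a \mid p(D \mid i, a, b) = 1 \}$ for every $b \in B_i$. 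Consequently any $\bar a \in \supp \mu$ satisfies $p(D \mid i, \bar a, b) = 1$ for all $b \in B_i$, which is exactly \eqref{eq:dominion-max}.

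The step I expect to be the crux is this last extraction of a single pure action valid uniformly in $b$: the null sets $U_b$ are indexed by the (possibly uncountable) family $B_i$, so one cannot simply discard their union. The resolution hinges on the interaction between the continuity of the transitions — which makes each $U_b$ \emph{open} — and the defining property of the support of a measure, which together confine $\supp \mu$ to the common good set $\bigcap_{b \in B_i} \{ a \mid p(D \mid i, a, b) = 1 \}$, nonempty since $\supp \mu$ is.
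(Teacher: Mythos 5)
Your proposal is correct and follows essentially the same route as the paper: the forward direction uses the pure stationary strategy built from the actions $\bar a_i$ and an induction/conditioning argument that the state never leaves $D$, and the converse takes a point in the support of the prescribed mixed action and combines the continuity of $a \mapsto p(D \mid i,a,b)$ with the defining property of the support (your ``open $\mu$-null sets avoid $\supp\mu$'' phrasing is just the contrapositive of the paper's $\varepsilon$-neighborhood contradiction). No gaps.
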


\begin{proof}
  First assume that \labelcref{eq:dominion-max} holds and consider any pure stationary strategy
  $\bar{\sigma}$ of player \Max that assigns to every current position $i \in D$ an action
  $\bar{a}_i \in A_i$ satisfying \labelcref{eq:dominion-max}.
  Let $\tau$ be any strategy of player \Min and let $\{i_k\}_{k \geq 0}$ be a sequence of
  states generated by the pair $(\bar{\sigma}, \tau)$ with an initial state $i_0 \in D$.
  Conditioning on the first exit time, the probability that the state leaves $D$ at some
  stage is
  \[
    \proba \left( \{i_k\} \not \subset D \right) = \sum_{k \geq 0}
    \proba \left( i_{k+1} \notin D \mid \{i_\ell\}_{0 \leq \ell \leq k} \subset D \right) \,
    \proba \left( \{i_\ell\}_{0 \leq \ell \leq k} \subset D \right) .
  \]
  With the particular choice of $\bar{\sigma}$ that we made, we have, for every stage $k \geq 0$,
  \[
    \proba \left( i_{k+1} \notin D \mid \{i_\ell\}_{0 \leq \ell \leq k} \subset D \right) \leq
    \max_{b \in B_{i_k}} p(\state \setminus D \mid i_k, \bar{a}_{i_k}, b) = 0 .
  \]
  Hence, the probability that the state leaves $D$ at some stage is 0, that is, $D$ is
  a dominion of player \Max.

  Conversely, assume that $D$ is a dominion of player \Max.
  Then, by definition of dominions, for every initial state $i \in D$, there exists
  a mixed action $\bar{\mu} \in \Delta(A_i)$ of player \Max such that for all mixed actions
  of player \Min, hence for all pure actions $b \in B_i$, the probability
  $p( D \mid i, \bar{\mu}, b)$ that the next state remains in $D$ is equal to 1.

  Take $\bar{a} \in \supp \bar{\mu}$ and assume, working toward a contradiction, that
  there exists some $\bar{b} \in B_i$ such that $p( D \mid i, \bar{a}, \bar{b}) < 1$.
  Since the function $a \mapsto p(\state \setminus D \mid i, a, \bar{b})$ is continuous
  (see \Cref{rem:continuity}) and positive in $\bar{a}$, then there exists a scalar
  $\varepsilon > 0$ and an open neighborhood $\Ucal$ of $\bar{a}$ such that
  $p( \state \setminus D \mid i, a, \bar{b}) \geq \varepsilon$
  for all $a \in \Ucal$.
  This implies that
  \[
    p( \state \setminus D \mid i, \bar{\mu}, \bar{b}) \geq
    \int_{\Ucal} p( \state \setminus D \mid i, a, \bar{b}) \, d\bar{\mu}(a) \geq
    \varepsilon \, \bar{\mu}(\Ucal) .
  \]
  However, since $\Ucal \cap \supp \bar{\mu}$ is nonempty (it contains $\bar{a}$), it follows from
  the definition of the support of a measure that
  $\bar{\mu}(\Ucal) \geq \bar{\mu}(\Ucal \cap \supp \bar{\mu}) > 0$.
  This implies that $p(\state \setminus D \mid i, \bar{\mu}, \bar{b}) > 0$, or equivalently
  $p(D \mid i, \bar{\mu}, \bar{b}) < 1$, a contradiction.
  So, for all $b \in B_i$, we have $p(D \mid i, \bar{a}, b) = 1$,
  hence \labelcref{eq:dominion-max} is satisfied.
  With dual arguments we can show that the dominions of player \Min are characterized by
  \labelcref{eq:dominion-min}.
\end{proof}

\begin{example}
  Consider the game introduced in \Cref{ex:dominions}, whose transition function is defined,
  for all actions $a \in [0,1]$ of player \Max and $b \in [0,1]$ of player \Min, by
  \[
    p(1,a,b) =
    \begin{pmatrix}
      \gamma a \\ b \, (1 - \gamma a) \\ (1-b) \, (1 - \gamma a)
    \end{pmatrix} \mkern-5mu , \;
    p(2,a,b) =
    \begin{pmatrix}
      \gamma b \\ a \, (1 - \gamma b) \\ (1-a) \, (1 - \gamma b)
    \end{pmatrix} \mkern-5mu , \;
    p(3,a,b) =
    \begin{pmatrix}
      0 \\ 0 \\ 1
    \end{pmatrix} \mkern-5mu ,
  \]
  where $\gamma$ is a fixed parameter in $(0,1)$.
  With \Cref{lem:dominion-characterization} in mind, it is now straightforward to check that
  the dominions of player \Max are $\{3\}$ and $\{1,2,3\}$, whereas the dominions of
  player \Min are $\{3\}$, $\{1,3\}$, $\{2,3\}$ and $\{1,2,3\}$.
\end{example}

\begin{remark}
  \label{rem:pure-mixed-actions}
  We easily deduce from the proof of the latter result that a dominion $D$ of player \Max
  (resp., \Min) in $\Gamma$ is also characterized by the following equality:
  \begin{align*}
    \max_{\mu \in \Delta(A_i)} \min_{\nu \in \Delta(B_i)} & p( D \mid i, \mu, \nu) = 1 \\
    \big( \text{resp.,} \quad
    \max_{\mu \in \Delta(A_i)} \min_{\nu \in \Delta(B_i)} &
    p( \state \setminus D \mid i, \mu, \nu) = 0 \big) ,
  \end{align*}
  where the minimum and the maximum commutes.
\end{remark}

We can now identify the sets of states satisfying one of the asymptotic properties
in \Cref{prop:boundedness-slice-spaces}.
This characterization (which is established in the following lemma) combined with
\Cref{prop:boundedness-slice-spaces} and \Cref{thm:ergodicity-slice-spaces}, yields
the dominion condition for ergodicity of stochastic games announced
in \Cref{thm:ergodicity-dominions}.

\begin{lemma}[Dominions and asymptotic behavior of the Shapley operator]
  \label{lem:dominions}
  Let $\Gamma$ be a stochastic game satisfying \Cref{asm:compact-continuous},
  with Shapley operator $T$.
  A set $D \subset \state$ is a dominion of player \Max (resp., \Min) in $\Gamma$ if
  and only if
  \begin{align}
    \label{eq:abstract-dominion-max}
    \forall i \in D, & \quad
    \lim_{\kappa \to -\infty} T_i( \kappa \, \unit_{\state \setminus D} ) > -\infty , \\
    \label{eq:abstract-dominion-min}
    \big( \text{resp.,} \quad
    \forall i \in D, & \quad
    \lim_{\kappa \to +\infty} T_i( \kappa \, \unit_{\state \setminus D} ) < +\infty \big) .
  \end{align}
\end{lemma}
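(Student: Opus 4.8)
The plan is to prove the two displayed equivalences \labelcref{eq:abstract-dominion-max} and \labelcref{eq:abstract-dominion-min} state-by-state, reducing each to the pure-action characterization of dominions in \Cref{lem:dominion-characterization}. I would treat in detail only the case of player \Max, i.e.\ condition \labelcref{eq:abstract-dominion-max}; the case of player \Min follows by the dual argument, exchanging $\sup$ and $\inf$, sending $\kappa \to +\infty$, and swapping the roles of upper and lower semicontinuity. The starting point is the evaluation of the Shapley operator on the tuned vector: since the coordinates of $\kappa\,\unit_{\state\setminus D}$ vanish on $D$ and equal $\kappa$ elsewhere, formula \labelcref{eq:Shapley-operator} gives
\[
  T_i(\kappa\,\unit_{\state\setminus D}) = \sup_{\mu\in\Delta(A_i)}\inf_{\nu\in\Delta(B_i)}\bigl\{ r(i,\mu,\nu) + \kappa\, q(\mu,\nu)\bigr\}, \qquad q(\mu,\nu):=p(\state\setminus D\mid i,\mu,\nu).
\]
By monotonicity and additive homogeneity, $\kappa\mapsto T_i(\kappa\,\unit_{\state\setminus D})$ is nondecreasing, so the limit in \labelcref{eq:abstract-dominion-max} exists in $[-\infty,+\infty)$ and equals $\inf_{\kappa} T_i(\kappa\,\unit_{\state\setminus D})$. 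It then suffices to show, for each $i\in D$, that this limit is finite if and only if there is a pure action $\bar a\in A_i$ with $p(D\mid i,\bar a,b)=1$ for all $b\in B_i$, i.e.\ condition \labelcref{eq:dominion-max} at $i$; quantifying over $i\in D$ and invoking \Cref{lem:dominion-characterization} then yields the dominion property.

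The easy implication is that \labelcref{eq:dominion-max} at $i$ forces the limit to be finite. Given such an $\bar a$, one has $q(\delta_{\bar a},\nu)=0$ for every $\nu$, so plugging the Dirac mass at $\bar a$ into the supremum yields the $\kappa$-independent lower bound $T_i(\kappa\,\unit_{\state\setminus D})\geq \inf_{\nu} r(i,\bar a,\nu) = \min_{b\in B_i} r(i,\bar a,b)$, which is finite because $r(i,\bar a,\cdot)$ is lower semicontinuous on the compact set $B_i$ (\Cref{asm:compact-continuous}). Hence the limit is bounded below by this finite quantity.

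The substantive direction --- and the step I expect to be the main obstacle --- is the converse: if the limit equals some $L>-\infty$, then \labelcref{eq:dominion-max} holds at $i$. Since under \Cref{asm:compact-continuous} the supremum is attained, for every $\kappa<0$ I would pick $\mu_\kappa\in\Delta(A_i)$ achieving $T_i(\kappa\,\unit_{\state\setminus D})\geq L$, which gives $r(i,\mu_\kappa,b)+\kappa\,q(\mu_\kappa,b)\geq L$ for all pure $b\in B_i$. The naive bound --- replacing $r$ by a single punishing mixed response of \Min --- fails here precisely because the payoff need only be bounded on one side under \Cref{asm:compact-continuous}, so $r(i,\mu,\nu)$ cannot be controlled uniformly in $\mu$ by a fixed $\nu$. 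The key observation that resolves this is that, for each \emph{fixed} pure action $b$, the map $r(i,\cdot,b)$ is upper semicontinuous on the compact set $A_i$ and hence bounded above by a finite constant $R(b):=\max_{a\in A_i}r(i,a,b)$, regardless of any global bound. Consequently $r(i,\mu_\kappa,b)\leq R(b)$, and rearranging the inequality above gives $q(\mu_\kappa,b)\leq (R(b)-L)/|\kappa|\to 0$ as $\kappa\to-\infty$, for each fixed $b$. Passing to a weak* convergent subsequence $\mu_\kappa\to\mu_\infty$ in the compact set $\Delta(A_i)$, and using that $\mu\mapsto q(\mu,b)$ is weak*-continuous (the map $a\mapsto p(\state\setminus D\mid i,a,b)$ being continuous and bounded, by \Cref{rem:continuity}), I would conclude $q(\mu_\infty,b)=0$, i.e.\ $p(D\mid i,\mu_\infty,b)=1$, for every $b\in B_i$. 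Finally, the support argument already carried out in the proof of \Cref{lem:dominion-characterization} upgrades this \emph{mixed} invariant action to a \emph{pure} one $\bar a\in\supp\mu_\infty$ satisfying \labelcref{eq:dominion-max} at $i$, which completes the proof.
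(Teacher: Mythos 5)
Your proof is correct, and your ``if'' direction coincides with the paper's. For the converse you take a genuinely different route. The paper fixes an arbitrary mixed action $\nu\in\Delta(B_i)$ of \Min, bounds $r(i,\mu,\nu)\le r^*$ uniformly in $\mu$ (u.s.c.\ of $\mu \mapsto r(i,\mu,\nu)$ on the compact set $\Delta(A_i)$), and lets $\kappa\to-\infty$ in the resulting upper bound $T_i(\kappa\,\unit_{\state\setminus D})\le r^*+\kappa\min_{\mu} p(\state\setminus D\mid i,\mu,\nu)$ to obtain $\max_{\mu} p(D\mid i,\mu,\nu)=1$ for each $\nu$ \emph{separately}; it then invokes the minimax identity of \Cref{rem:pure-mixed-actions} to exchange $\max_\mu$ and $\min_\nu$ and conclude. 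You instead extract an optimal $\mu_\kappa$ for each $\kappa$, bound $r(i,\mu_\kappa,b)\le\max_{a}r(i,a,b)$ for each fixed pure $b$, and pass to a weak* cluster point $\mu_\infty$ of $(\mu_\kappa)$, producing a \emph{single} mixed action with $p(D\mid i,\mu_\infty,b)=1$ for all $b\in B_i$, which you then purify via the support argument from the proof of \Cref{lem:dominion-characterization}. What each buys: your argument bypasses the minimax theorem entirely, needing only attainment of the outer supremum, compactness of $\Delta(A_i)$, and weak* continuity of $\mu\mapsto p(D\mid i,\mu,b)$ (all supplied by \Cref{asm:compact-continuous} and \Cref{rem:continuity}), at the cost of redoing the purification step; the paper's argument is shorter because it offloads both the exchange of $\max$ and $\min$ and the purification onto \Cref{rem:pure-mixed-actions}, but it leans on the existence of the value of the auxiliary one-shot game with payoff $p(D\mid i,\cdot,\cdot)$. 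Both arguments use exactly the correct one-sided semicontinuity of $r$, so neither requires the payoff to be bounded.
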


\begin{proof}
  Assume first that $D$ is a dominion of player \Max in $\Gamma$.
  Then, according to \Cref{lem:dominion-characterization}, for every $i \in D$
  there exists an action $\bar{a} \in A_i$ such that $p(D \mid i, \bar{a}, b) = 1$
  for all $b \in B_i$.
  Hence, for all real numbers $\kappa \leq 0$ we have
  \begin{equation*}
    \begin{split}
      T_i(\kappa \unit_{\state \setminus D}) \geq & \min_{\nu \in \Delta(B_i)}
      \bigg\{ r(i,\bar{a},\nu) + \sum_{\ell \notin D} \kappa \, p(\ell \mid i,\bar{a},\nu) \bigg\} \\
      = & \min_{\nu \in \Delta(B_i)} \Big\{ r(i,\bar{a},\nu) +
      \kappa \, p(\state \setminus D \mid i,\bar{a},\nu) \Big\} \\
      = & \min_{\nu \in \Delta(B_i)} r(i,\bar{a},\nu) .
    \end{split}
  \end{equation*}
  Since the map $\nu \mapsto r(i,\bar{a},\nu)$ is l.s.c.\ on $\Delta(B_i)$ and since
  the latter set is compact, we deduce that the right-hand side of the above inequality
  is finite.
  This shows that \labelcref{eq:abstract-dominion-max} is true.

  Conversely, assume that \labelcref{eq:abstract-dominion-max} holds true.
  Let $i \in D$ and let $\nu \in \Delta(B_i)$ be a mixed action of player \Min.
  Since the map $\mu \mapsto r(i,\mu,\nu)$ is u.s.c.\ on $\Delta(A_i)$ and since
  the latter set is compact, there exists some $r^* \in \R$ such that
  $r(i,\mu,\nu) \leq r^*$ for all $\mu \in \Delta(A_i)$.
  Then we have for all $\kappa \leq 0$
  \begin{equation*}
    \begin{split}
      T_i(\kappa \, \unit_{\state \setminus D}) & \leq \max_{\mu \in \Delta(A_i)} \bigg\{
        r(i,\mu,\nu) + \sum_{\ell \notin D} \kappa \, p(\ell \mid i,\mu,\nu) \bigg\} \\
        & \leq r^* + \max_{\mu \in \Delta(A_i)} \bigg\{
          \sum_{\ell \notin D} \kappa \, p(\ell \mid i,\mu,\nu) \bigg\} \\
          & = r^* + \kappa \, \min_{\mu \in \Delta(A_i)}
          p(\state \setminus D \mid i,\mu,\nu) .
        \end{split}
      \end{equation*}
  The left-hand term in the latter inequality being lower-bounded, we deduce that,
  for all $\nu \in \Delta(B_i)$,
  \[
    \min_{\mu \in \Delta(A_i)} p(\state \setminus D \mid i,\mu,\nu) \leq 0
  \]
  or, equivalently,
  \[
    \max_{\mu \in \Delta(A_i)} p(D \mid i,\mu,\nu) \geq 1 .
  \]
  Since the above maximum is also lower than $1$, we have
  \[ 
    \min_{\nu \in \Delta(B_i)} \max_{\mu \in \Delta(A_i)} p(D \mid i,\mu,\nu) =
    \max_{\mu \in \Delta(A_i)} \min_{\nu \in \Delta(B_i)} p(D \mid i,\mu,\nu) = 0 .
  \]
  Thus, \Cref{rem:pure-mixed-actions} yields that $D$ is a dominion of player \Max in $\Gamma$.
  With dual arguments we can show that the same is true for the dominions of player \Min.
\end{proof}

\Cref{thm:ergodicity-dominions} shows that the ergodicity of a stochastic game $\Gamma$ is
a structural property, in the sense that it only depends on the transition probabilities.
The following immediate corollary makes this observation even clearer.
However, we draw the attention of the reader to the fact that \cref{asm:compact-continuous} is
not only sufficient but also necessary for this characteristic to hold, as already explained in
\cref{rem:structural-property} and illustrated with \cref{ex:ergodicity-perfect-info}.

Before stating the result, let us fix some notation.
First, we shall say for brevity that a function $g: K \to \R$ is {\em u.s.c./l.s.c.}\
if, for all $(i,a,b) \in K$, $g(i,\cdot,b)$ is u.s.c.\ and $g(i,a,\cdot)$ is l.s.c.
Second, we next consider a parametric family of stochastic games for which all the data
are fixed except the payoff function $r$.
In order to highlight this dependency, we shall use the notation
$\Gamma(r)$ to refer to the game $(\state,A,B,K_A,K_B,r,p)$.

\begin{corollary}
  Let $\Gamma(r)$ be a stochastic game satisfying \Cref{asm:compact-continuous}.
  The following assertions are equivalent:
  \begin{enumerate}
    \item $\Gamma(r)$ is ergodic;
    \item $\Gamma(0)$, i.e., the game with all payoffs equal to $0$, is ergodic;
    \item for all Borel measurable u.s.c./l.s.c.\ payoff functions $\tilde{r}: K \to \R$
      which are bounded below or above, the ergodic equation~\labelcref{eq:acoe} is solvable for
      the Shapley operator of $\Gamma(\tilde{r})$.
  \end{enumerate}
\end{corollary}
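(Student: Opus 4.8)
The plan is to exploit the structural characterization of ergodicity provided by \Cref{thm:ergodicity-dominions}, which reduces ergodicity of any game satisfying \Cref{asm:compact-continuous} to the condition that the players do not have disjoint dominions. By \Cref{def:dominions} (and its pure-action reformulation in \Cref{lem:dominion-characterization}), this condition is entirely determined by the transition function $p$ and the action sets, and is therefore insensitive to the payoff. Since $\Gamma(r)$, $\Gamma(0)$ and every $\Gamma(\tilde{r})$ appearing in the statement share the same data except for their payoffs, the first step is to verify that each of these games satisfies \Cref{asm:compact-continuous}: this holds for $\Gamma(r)$ by hypothesis, for $\Gamma(0)$ because the zero function is trivially u.s.c./l.s.c.\ and bounded, and for $\Gamma(\tilde{r})$ precisely by the assumptions imposed on $\tilde{r}$ in assertion~(iii). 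Consequently all these games have the same dominions and, by \Cref{thm:ergodicity-dominions}, are simultaneously ergodic or not.

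From this observation the equivalence (i)~$\Leftrightarrow$~(ii) is immediate, as $\Gamma(r)$ and $\Gamma(0)$ share the same dominions. For (i)~$\Rightarrow$~(iii), I would argue that if $\Gamma(r)$ is ergodic then so is $\Gamma(\tilde{r})$ for every admissible $\tilde{r}$, again because they have the same dominions; ergodicity of $\Gamma(\tilde{r})$ means by \Cref{def:ergodicity} that the ergodic equation \labelcref{eq:acoe} is solvable for all state-dependent perturbations of its Shapley operator, so in particular (taking the null perturbation) it is solvable for the Shapley operator of $\Gamma(\tilde{r})$ itself, which is exactly assertion~(iii).

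For the remaining implication (iii)~$\Rightarrow$~(i), the key remark is that a state-dependent perturbation $g \in \R^n$ of the payoff of $\Gamma(r)$ yields the payoff $\tilde{r}(i,a,b) = g_i + r(i,a,b)$, which is again Borel measurable, u.s.c./l.s.c.\ (adding the state-indexed constant $g_i$ does not affect semicontinuity in the actions), and bounded below or above (since $r(i,\cdot,\cdot)$ is, and $g$ is a finite vector). Thus every such $\tilde{r}$ is admissible in assertion~(iii), and the Shapley operator of $\Gamma(\tilde{r})$ is precisely $g+T$. Assertion~(iii) then gives solvability of \labelcref{eq:acoe} for $g+T$ for every $g \in \R^n$, which is exactly the ergodicity of $\Gamma(r)$ by \Cref{def:ergodicity}.

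I expect the only genuine subtlety to be the transfer of ergodicity from $\Gamma(r)$ to an arbitrary $\Gamma(\tilde{r})$ in the step (i)~$\Rightarrow$~(iii): a general $\tilde{r}$ is \emph{not} a state-dependent perturbation of $r$, so the definition of ergodicity of $\Gamma(r)$ cannot be invoked directly, and one must genuinely rely on the payoff-independence of the dominion condition furnished by \Cref{thm:ergodicity-dominions}. The other two implications are essentially bookkeeping, the only care being to check that the admissibility conditions on the payoff in assertion~(iii) are preserved under adding a state-dependent constant, which is routine.
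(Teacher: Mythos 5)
Your proof is correct and follows exactly the route the paper intends (the paper labels this corollary ``immediate'' from \Cref{thm:ergodicity-dominions} and gives no written proof): ergodicity under \Cref{asm:compact-continuous} depends only on the dominion structure, hence only on the transitions, and a state-dependent perturbation $g_i + r(i,a,b)$ is itself an admissible payoff whose Shapley operator is $g+T$. Your closing remark correctly identifies the one point that genuinely requires \Cref{thm:ergodicity-dominions} rather than \Cref{def:ergodicity} alone, namely passing from $\Gamma(r)$ to an arbitrary $\Gamma(\tilde{r})$.
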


\bibliographystyle{amsalpha}
\bibliography{references}

\providecommand{\bysame}{\leavevmode\hbox to3em{\hrulefill}\thinspace}
\providecommand{\MR}{\relax\ifhmode\unskip\space\fi MR }
\providecommand{\MRhref}[2]{%
  \href{http://www.ams.org/mathscinet-getitem?mr=#1}{#2}
}
\providecommand{\href}[2]{#2}
\begin{thebibliography}{BEGM10}

\bibitem[AB06]{AB06}
C.~D. Aliprantis and K.~C. Border, \emph{Infinite dimensional analysis: a
  hitchhiker's guide}, third ed., Springer, Berlin, 2006.

\bibitem[AF09]{AF09}
J.-P. Aubin and H.~Frankowska, \emph{Set-valued analysis}, Modern Birkh\"auser
  Classics, Birkh\"auser Boston, Inc., Boston, MA, 2009, Reprint of the 1990
  edition.

\bibitem[AGH15a]{AGH15-DCDS}
M.~Akian, S.~Gaubert, and A.~Hochart, \emph{Ergodicity conditions for zero-sum
  games}, Discrete Contin. Dyn. Syst. \textbf{35} (2015), no.~9, 3901--3931.

\bibitem[AGH15b]{AGH15-CDC}
\bysame, \emph{Hypergraph conditions for the solvability of the ergodic
  equation for zero-sum games}, 54th IEEE Conference on Decision and Control
  (Osaka, Japan), December 2015, \arxiv{1510.05396}, pp.~5845--5850.

\bibitem[AGH18]{AGH18}
\bysame, \emph{A game theory approach to the existence and uniqueness of
  nonlinear {P}erron-{F}robenius eigenvectors}, Preprint, 2018.

\bibitem[AHS97]{AHS97}
E.~Altman, A.~Hordijk, and F.~M. Spieksma, \emph{Contraction conditions for
  average and {$\alpha$}-discount optimality in countable state {M}arkov games
  with unbounded rewards}, Math. Oper. Res. \textbf{22} (1997), no.~3,
  588--618.

\bibitem[Asp67]{Asp67}
E.~Asplund, \emph{Positivity of duality mappings}, Bull. Amer. Math. Soc.
  \textbf{73} (1967), 200--203.

\bibitem[Bat73]{Bat73}
J.~Bather, \emph{Optimal decision procedures for finite {M}arkov chains. {II}.
  {C}ommunicating systems}, Advances in Appl. Probability \textbf{5} (1973),
  521--540.

\bibitem[BEGM10]{BEGM10}
E.~Boros, K.~Elbassioni, V.~Gurvich, and K.~Makino, \emph{A pumping algorithm
  for ergodic stochastic mean payoff games with perfect information}, Integer
  programming and combinatorial optimization, Lecture Notes in Comput. Sci.,
  vol. 6080, Springer, Berlin, 2010, pp.~341--354.

\bibitem[BG93]{BG93}
V.~S. Borkar and M.~K. Ghosh, \emph{Denumerable state stochastic games with
  limiting average payoff}, J. Optim. Theory Appl. \textbf{76} (1993), no.~3,
  539--560.

\bibitem[BGV15]{BGV15}
J.~Bolte, S.~Gaubert, and G.~Vigeral, \emph{Definable zero-sum stochastic
  games}, Math. Oper. Res. \textbf{40} (2015), no.~1, 171--191.

\bibitem[BK76]{BK76}
T.~Bewley and E.~Kohlberg, \emph{The asymptotic theory of stochastic games},
  Math. Oper. Res. \textbf{1} (1976), no.~3, 197--208.

\bibitem[Bro76]{Bro76}
F.~E. Browder, \emph{Nonlinear operators and nonlinear equations of evolution
  in {B}anach spaces}, 1--308.

\bibitem[Cio90]{Cio90}
I.~Cioranescu, \emph{Geometry of {B}anach spaces, duality mappings and
  nonlinear problems}, Mathematics and its Applications, vol.~62, Kluwer
  Academic Publishers Group, Dordrecht, 1990.

\bibitem[CT80]{CT80}
M.~G. Crandall and L.~Tartar, \emph{Some relations between nonexpansive and
  order preserving mappings}, Proc. Amer. Math. Soc. \textbf{78} (1980), no.~3,
  385--390.

\bibitem[Eve57]{Eve57}
H.~Everett, \emph{Recursive games}, Contributions to the theory of games, vol.
  3, Annals of Mathematics Studies, no. 39, Princeton University Press,
  Princeton, N. J., 1957, pp.~47--78.

\bibitem[FHK72]{FHK72}
P.~M. Fitzpatrick, P.~Hess, and T.~Kato, \emph{Local boundedness of
  monotone-type operators}, Proc. Japan Acad. \textbf{48} (1972), 275--277.

\bibitem[GB98]{GB98}
M.~K. Ghosh and A.~Bagchi, \emph{Stochastic games with average payoff
  criterion}, Appl. Math. Optim. \textbf{38} (1998), no.~3, 283--301.

\bibitem[GG04]{GG04}
S.~Gaubert and J.~Gunawardena, \emph{The {P}erron-{F}robenius theorem for
  homogeneous, monotone functions}, Trans. Amer. Math. Soc. \textbf{356}
  (2004), no.~12, 4931--4950 (electronic).

\bibitem[GL89]{GL89}
V.~A. Gurvich and V.~N. Lebedev, \emph{A criterion and verification of the
  ergodicity of cyclic game forms}, Uspekhi Mat. Nauk \textbf{44} (1989),
  no.~1(265), 193--194.

\bibitem[HK66]{HK66}
A.~J. Hoffman and R.~M. Karp, \emph{On nonterminating stochastic games},
  Management Sci. \textbf{12} (1966), 359--370.

\bibitem[HLL00]{HLL00}
O.~Hern\'andez-Lerma and J.~B. Lasserre, \emph{Zero-sum stochastic games in
  {B}orel spaces: average payoff criteria}, SIAM J. Control Optim. \textbf{39}
  (2000), no.~5, 1520--1539.

\bibitem[Hoc16]{Hoc16-MTNS}
A.~Hochart, \emph{An accretive operator approach to ergodic problems for
  zero-sum games}, 22nd International Symposium on Mathematical Theory of
  Networks and Systems (Minneapolis, Minnesota, USA), July 2016,
  \arxiv{1605.04520},
  \href{http://hdl.handle.net/11299/181518}{hdl:11299/181518}, pp.~315--318.

\bibitem[JN01]{JN01}
A.~Ja\'skiewicz and A.~S. Nowak, \emph{On the optimality equation for zero-sum
  ergodic stochastic games}, Math. Methods Oper. Res. \textbf{54} (2001),
  no.~2, 291--301.

\bibitem[JPZ08]{JPZ08}
M.~Jurdzi{\'n}ski, M.~Paterson, and U.~Zwick, \emph{A deterministic
  subexponential algorithm for solving parity games}, SIAM J. Comput.
  \textbf{38} (2008), no.~4, 1519--1532.

\bibitem[Koh74]{Koh74}
E.~Kohlberg, \emph{Repeated games with absorbing states}, Ann. Statist.
  \textbf{2} (1974), 724--738.

\bibitem[KS76]{KS76}
J.~G. Kemeny and J.~L. Snell, \emph{Finite {M}arkov chains}, Springer-Verlag,
  New York-Heidelberg, 1976, Reprinting of the 1960 original, Undergraduate
  Texts in Mathematics.

\bibitem[KS80]{KS80}
W.~A. Kirk and R.~Sch{\"o}neberg, \emph{Zeros of {$m$}-accretive operators in
  {B}anach spaces}, Israel J. Math. \textbf{35} (1980), no.~1-2, 1--8.

\bibitem[K{\"u}e01]{Kue01}
H.-U. K{\"u}enle, \emph{On multichain {M}arkov games}, Advances in dynamic
  games and applications ({M}aastricht, 1998), Ann. Internat. Soc. Dynam.
  Games, vol.~6, Birkh\"auser Boston, Boston, MA, 2001, pp.~147--163.

\bibitem[LN12]{LN12}
B.~Lemmens and R.~D. Nussbaum, \emph{Nonlinear {P}erron-{F}robenius theory},
  Cambridge Tracts in Mathematics, vol. 189, Cambridge University Press,
  Cambridge, 2012.

\bibitem[MN81]{MN81}
J.-F. Mertens and A.~Neyman, \emph{Stochastic games}, Internat. J. Game Theory
  \textbf{10} (1981), no.~2, 53--66.

\bibitem[MNR09]{MNR09}
J.-F. Mertens, A.~Neyman, and D.~Rosenberg, \emph{Absorbing games with compact
  action spaces}, Math. Oper. Res. \textbf{34} (2009), no.~2, 257--262.

\bibitem[MSZ15]{MSZ15}
J.-F. Mertens, S.~Sorin, and S.~Zamir, \emph{Repeated games}, Econometric
  Society Monographs, Cambridge University Press, New York, 2015, With a
  foreword by Robert J. Aumann.

\bibitem[Ney03]{Ney03}
A.~Neyman, \emph{Stochastic games and nonexpansive maps}, Stochastic games and
  applications ({S}tony {B}rook, {NY}, 1999), NATO Sci. Ser. C Math. Phys.
  Sci., vol. 570, Kluwer Acad. Publ., Dordrecht, 2003, pp.~397--415.

\bibitem[NS03]{NS03}
A.~Neyman and S.~Sorin, \emph{Stochastic games and applications}, Nato Science
  Series C, vol. 570, Springer Netherlands, 2003.

\bibitem[Nus88]{Nus88}
R.~D. Nussbaum, \emph{Hilbert's projective metric and iterated nonlinear maps},
  Mem. Amer. Math. Soc. \textbf{75} (1988), no.~391, iv+137.

\bibitem[Pet70]{Pet70}
W.~V. Petryshyn, \emph{A characterization of strict convexity of {B}anach
  spaces and other uses of duality mappings}, J. Functional Analysis \textbf{6}
  (1970), 282--291.

\bibitem[Rei94]{Rei94}
S.~Reich, \emph{Approximating fixed points of nonexpansive mappings}, Panamer.
  Math. J. \textbf{4} (1994), no.~2, 23--28.

\bibitem[Ren11]{Ren11}
J.~Renault, \emph{Uniform value in dynamic programming}, J. Eur. Math. Soc.
  (JEMS) \textbf{13} (2011), no.~2, 309--330.

\bibitem[RS01]{RS01}
D.~Rosenberg and S.~Sorin, \emph{An operator approach to zero-sum repeated
  games}, Israel J. Math. \textbf{121} (2001), 221--246.

\bibitem[RW09]{RW09}
R.~T. Rockafellar and R.~J.-B. Wets, \emph{Variational analysis}, Grundlehren
  der Mathematischen Wissenschaften [Fundamental Principles of Mathematical
  Sciences], vol. 317, Springer-Verlag Berlin Heidelberg, 2009, Reprint of the
  1998 edition.

\bibitem[Sen94]{Sen94}
L.~I. Sennott, \emph{Zero-sum stochastic games with unbounded costs: discounted
  and average cost cases}, Z. Oper. Res. \textbf{39} (1994), no.~2, 209--225.

\bibitem[Sha53]{Sha53}
L.~S. Shapley, \emph{Stochastic games}, Proc. Nat. Acad. Sci. U. S. A.
  \textbf{39} (1953), 1095--1100.

\bibitem[Sor03]{Sor03}
S.~Sorin, \emph{The operator approach to zero-sum stochastic games}, Stochastic
  games and applications ({S}tony {B}rook, {NY}, 1999), NATO Sci. Ser. C Math.
  Phys. Sci., vol. 570, Kluwer Acad. Publ., Dordrecht, 2003, pp.~417--426.

\bibitem[Sor04]{Sor04}
\bysame, \emph{Asymptotic properties of monotonic nonexpansive mappings},
  Discrete Event Dyn. Syst. \textbf{14} (2004), no.~1, 109--122.

\bibitem[SV16]{SV16}
S.~Sorin and G.~Vigeral, \emph{Operator approach to values of stochastic games
  with varying stage duration}, Internat. J. Game Theory \textbf{45} (2016),
  no.~1-2, 389--410.

\bibitem[Vig10]{Vig10}
G.~Vigeral, \emph{Evolution equations in discrete and continuous time for
  nonexpansive operators in {B}anach spaces}, ESAIM Control Optim. Calc. Var.
  \textbf{16} (2010), no.~4, 809--832.

\bibitem[Vig13]{Vig13}
\bysame, \emph{A zero-zum stochastic game with compact action sets and no
  asymptotic value}, Dyn. Games Appl. \textbf{3} (2013), no.~2, 172--186.

\bibitem[Vri03]{Vri03}
O.~J. Vrieze, \emph{Stochastic games and stationary strategies}, Stochastic
  games and applications ({S}tony {B}rook, {NY}, 1999), NATO Sci. Ser. C Math.
  Phys. Sci., vol. 570, Kluwer Acad. Publ., Dordrecht, 2003, pp.~37--50.

\bibitem[Zil16a]{Zil16-MOR}
B.~Ziliotto, \emph{A tauberian theorem for nonexpansive operators and
  applications to zero-sum stochastic games}, Math. Oper. Res. \textbf{41}
  (2016), no.~4, 1522--1534.

\bibitem[Zil16b]{Zil16-AOP}
\bysame, \emph{Zero-sum repeated games: {C}ounterexamples to the existence of
  the asymptotic value and the conjecture
  {$\operatorname{maxmin}=\operatorname{lim}v_n$}}, Ann. Probab. \textbf{44}
  (2016), no.~2, 1107--1133.

\end{thebibliography}

\end{document}